\newtheorem{theorem}{Theorem}[section]
\newtheorem{lemma}[theorem]{Lemma}
\newtheorem{proposition}[theorem]{Proposition}
\theoremstyle{definition}
\newtheorem{definition}[theorem]{Definition}
\newtheorem{example}[theorem]{Example}
\theoremstyle{remark}
\newtheorem{remark}[theorem]{Remark}
\numberwithin{equation}{section}
\newcommand {\hide}[1]{}
\newtheorem{notation}{Notation}
\newcommand {\junk}[1]{}
\newcommand {\R} {\mbox{\rm R}}
\newcommand {\s}        {\mbox{\rm sign}}
\newcommand {\A}     {{\mathcal A}}
\newcommand {\Real}[1]   {\mbox{${\Bbb R}^{#1}$}}
\newcommand {\Sphere}{\mbox{${\bf S}$}}     
\newcommand {\Suspension}{{\mathbf S}}     
 \newcommand {\re}         {\Real{}}
\newcommand {\Z}  {{\mathbb Z}}
\newcommand {\ZZ} {{\rm Z}}
\newcommand {\RR} {{\mathcal R}}
\newcommand {\la}   {{\langle}}
\newcommand {\ra}   {{\rangle}}
\newcommand {\eps} {{\varepsilon}}
\newcommand {\E} {{\rm Ext}}
\newcommand {\PP}     {{\Bbb P}} 
\newcommand {\spanof} {{\rm span}}
\newcommand{\x}{\mathbf{x}}
\newcommand{\y}{\mathbf{y}}
\newcommand{\z}{\mathbf{z}}
\newcommand{\hocolimit}{{\rm hocolim}}
\newcommand{\colimit}{{\rm colim}}
\newcommand{\HH}{{\rm H}}
\newcommand{\bigcupdot}
{\mathop{\makebox[0pt]{\hskip 1.4em $\boldsymbol\cdot$}\bigcup}}
\begin{document}
\title[Bounding the number of homotopy types]
{Bounding the number of stable homotopy types 
of a parametrized family of semi-algebraic sets defined by quadratic 
inequalities}

\author{Saugata Basu}
\author{Michael Kettner}
\address{School of Mathematics,
Georgia Institute of Technology, Atlanta, GA 30332, U.S.A.}

\email{\{saugata,mkettner\}@math.gatech.edu}




\keywords{Homotopy types, Quadratic inequalities, Semi-algebraic sets}

\begin{abstract}
We prove a nearly optimal bound on the number of stable 
homotopy types occurring 
in a $k$-parameter semi-algebraic 
family of sets in $\R^\ell$, each
defined in terms of $m$ quadratic inequalities. Our bound is exponential
in $k$ and $m$, but polynomial in $\ell$. 
More precisely, we prove the following.
Let $\R$ be a real closed field and let
\[
{\mathcal P} = \{P_1,\ldots,P_m\} \subset \R[Y_1,\ldots,Y_\ell,X_1,\ldots,X_k],
\]
with
${\rm deg}_Y(P_i) \leq 2, {\rm deg}_X(P_i) \leq d, 1 \leq i \leq m$.
Let $S \subset \R^{\ell+k}$ be a semi-algebraic set, defined by a Boolean 
formula without negations, whose atoms are of the form,
$P \geq 0, P\leq 0, P \in {\mathcal P}$.
Let $\pi: \R^{\ell+k} \rightarrow \R^k$ be the projection on the
last $k$ co-ordinates.
Then, the number of stable
homotopy types amongst the fibers 
$S_{\x} = \pi^{-1}(\x) \cap S$ 
is bounded by 
$
\displaystyle{
(2^m\ell k d)^{O(mk)}.
}
$
\end{abstract}
\maketitle
\section{Introduction}
\label{sec:intro}
Let $S \subset \R^{\ell+k}$ be a semi-algebraic set over a real closed field
$\R$. 
Let ${\pi:\R^{\ell + k} \to \R^k}$ be the projection map
on the last $k$ co-ordinates, and for any $S \subset \R^{\ell+k}$
we will denote by $\pi_S$ the restriction of $\pi$ to $S$. 
Moreover, when the map $\pi$ is clear from context, 
for any $\x \in \R^k$ we will denote by $S_{\x}$ the fiber
$\pi_S^{-1}(\x)$.

A fundamental theorem in semi-algebraic geometry states,

\begin{theorem}(Hardt's triviality theorem \cite{Hardt})
\label{the:hardt}
There exists a semei-algebraic partition of $\R^k$,
$\{T_i\}_{i \in I}$, such that the map
$\pi_{S}$ is definably trivial over each $T_i$.
\end{theorem}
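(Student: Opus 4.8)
The plan is to derive the statement from the existence of a cylindrical algebraic decomposition (CAD) of $\R^{\ell+k}$ adapted to the defining polynomials of $S$, together with an explicit fibrewise trivialization of each cylinder lying over a cell of the induced decomposition of $\R^k$. (An alternative route is to use a semi-algebraic Whitney stratification of $S$ compatible with $\pi$ and the semi-algebraic version of Thom's first isotopy lemma; I will outline the more elementary CAD argument.)

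First I would order the coordinates on $\R^{\ell+k}$ as $(X_1,\dots,X_k,Y_1,\dots,Y_\ell)$, so that $\pi$ is the projection forgetting the innermost $\ell$ coordinates, and take any finite family of polynomials whose signs determine membership in $S$ — for the set $S$ in the statement one may simply use ${\mathcal P}$. Running the CAD construction on this family (eliminating $Y_\ell$, then $Y_{\ell-1}$, \dots, then $Y_1$, and finally producing a CAD of $\R^k$ in the variables $X_1,\dots,X_k$) yields: a finite partition of $\R^{\ell+k}$ into semi-algebraic cells, of which $S$ is a union; a finite semi-algebraic partition $\{T_i\}_{i\in I}$ of $\R^k$, the cells of the induced CAD; and the key structural fact that, above a fixed $T_i$, the cells are cylindrically arranged through $\ell$ successive ``stack'' steps — at each step, over a cell one has finitely many continuous semi-algebraic section functions $\xi_1<\dots<\xi_p$ together with the open bands between consecutive sections (and the two unbounded end bands) — and the combinatorial pattern of which cells occur, in which vertical order, and which lie in $S$, is the same at every point of $T_i$. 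The partition $\{T_i\}_{i\in I}$ is the one asserted by the theorem.

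Next, for each $i$ I would fix a base point $\x_0\in T_i$ and construct a semi-algebraic homeomorphism $h_i\colon \pi_S^{-1}(T_i)\to T_i\times S_{\x_0}$ commuting with the projections onto $T_i$. I would build $h_i$ by induction on the stack level: having already trivialized the part of the cylinder over $T_i$ through level $j$, I extend to level $j+1$ by the fibrewise self-homeomorphism of the $(j+1)$-st coordinate line that carries each section value $\xi_s$ to its value over $\x_0$, is affine on each closed interval $[\xi_s,\xi_{s+1}]$, and is a fixed semi-algebraic homeomorphism (a translation, say) on the two unbounded ends; composing these maps over $j=k,\dots,k+\ell-1$ gives $h_i$. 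Such a map is semi-algebraic (assembled from finitely many semi-algebraic pieces over the cells of the level below), it is a bijection with semi-algebraic inverse (consecutive section functions never cross, so the interpolation is strictly increasing in the active coordinate), and by design it sends cells to cells; since $S$ restricted to the cylinder over $T_i$ is the union of a fixed set of cells, $h_i$ restricts to a trivialization of $\pi_S$ over $T_i$. Doing this for all $i$ completes the proof. When $\R\neq\re$ one runs the CAD construction directly — it is valid over any real closed field — or transfers the conclusion from $\re$ by the Tarski--Seidenberg principle.

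I expect the main obstacle to be the continuity of $h_i$ across the internal boundaries of a stack: at levels with two or more $Y$-coordinates eliminated, the section functions of level $j+1$ are defined only piecewise over the cells of level $j$, so one must arrange the inductive bookkeeping so that, after pulling back through the trivialization constructed so far, these section functions become jointly continuous on the relevant closed-up pieces; only then do the level-wise interpolations glue into a genuine homeomorphism of the whole cylinder over $T_i$. Handling the unbounded sectors and treating a section (a ``degenerate interval'') and a sector uniformly also require some care, but once the cylindrical trivialization over a single $T_i$ is established no further global gluing is needed, since the theorem only demands triviality over each $T_i$ separately.
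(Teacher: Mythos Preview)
The paper does not give its own proof of this statement: Theorem~\ref{the:hardt} is quoted as Hardt's triviality theorem with a reference to \cite{Hardt}, and the only further comment is that it ``is a corollary of the existence of cylindrical algebraic decompositions (see \cite{BPR03}).'' So there is nothing to compare your argument against beyond that one-line remark.

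Your proposal is exactly the route the paper alludes to --- deduce Hardt triviality from a CAD adapted to the defining polynomials, with the variables ordered so that the parameter coordinates are eliminated last --- and your outline of the fibrewise piecewise-affine trivialization over each base cell is the standard one (as in \cite{BPR03} or \cite{BCR}). You have also correctly identified the genuine technical point: the section functions at level $j+1$ are only defined cell-by-cell over level $j$, and one must show they glue to continuous functions on the closure after composing with the trivializations already built, so that the inductive interpolation yields a global homeomorphism over $T_i$. That is precisely the content one has to supply, and it is handled in the references; your sketch is consistent with it. The alternative you mention (semi-algebraic Whitney stratification plus the semi-algebraic Thom first isotopy lemma \cite{CS}) is also valid and is in fact the tool the paper uses later, but for Theorem~\ref{the:hardt} itself the CAD argument is the one the paper points to.
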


Theorem \ref{the:hardt} implies
that for each $i \in I$ and any point
$\y \in T_i$, the pre-image $\pi^{-1}(T_i) \cap S$ 
is semi-algebraically homeomorphic to $S_{\y} \times T_i$ 
by a fiber preserving homeomorphism.
In particular, for each $i \in I$, all fibers $S_{\y}$, 
$\y \in T_i$, 
are semi-algebraically homeomorphic.

Hardt's theorem is a corollary of
the existence of {\em cylindrical algebraic  decompositions}
(see \cite{BPR03}), 
which implies
a double exponential (in $k$ and $\ell$) 
upper bound on the cardinality of $I$ and
hence on the number of homeomorphism types of the fibers of the map $\pi_S$.
No better bounds than the double exponential bound are known, 
even though
it seems reasonable to conjecture a single exponential upper bound
on the number of homeomorphism types of the fibers of the map $\pi_S$.

In \cite{BV06}, the weaker problem of bounding the number of
distinct {\em homotopy types}, occurring amongst the set of all
fibers of $\pi_S$ was considered,
and a single exponential  upper bound was proved on the number of
homotopy types of such fibers.

Before stating this result more precisely we need to introduce a 
few notation. Let $\R$ be a real closed field,
${\mathcal P} \subset \R[Y_1,\ldots,Y_\ell,X_1,\ldots,X_k]$, and
let $\phi$ be a Boolean formula with atoms of the form
$P=0$, $P > 0$, or $P< 0$, where $P \in {\mathcal P}$.
We call $\phi$ a ${\mathcal P}$-formula, and the semi-algebraic set
$S \subset \R^{\ell+k}$ defined by $\phi$, a ${\mathcal P}$-semi-algebraic set.

If the Boolean formula $\phi$ contains no negations, and
its atoms are of the form
$P= 0$, $P \geq 0$, or $P \leq  0$, with $P \in {\mathcal P}$,
then we call $\phi$ a ${\mathcal P}$-closed formula, and the semi-algebraic set
$S \subset \R^{\ell+k}$ defined by ${\phi}$,
a ${\mathcal P}$-closed semi-algebraic set.

The following theorem appears in \cite{BV06}.

\begin{theorem}\cite{BV06}
\label{the:mainBV}
Let ${\mathcal P} \subset \R[Y_1,\ldots,Y_\ell,X_1,\ldots,X_k]$,
with $\deg(P) \leq d$ for each $P \in {\mathcal P}$ and cardinality
$\#{\mathcal P} = m$.
Then, there exists a finite set~$A \subset \R^k$,
with 
\[
\# A \leq (2^\ell mkd)^{O(k\ell)},
\]
such that for every $\x \in \R^k$ there exists $\z \in A$ such that
for every ${\mathcal P}$-semi-algebraic set~$S \subset \R^{\ell+k}$, the set~$S_{\x}$
is semi-algebraically homotopy equivalent to $S_{\z}$.
In particular, for any fixed ${\mathcal P}$-semi-algebraic set~$S$,
the number of different homotopy types of fibers~$S_{\x}$
for various $\x \in \pi(S)$ is also bounded by
\[
(2^{\ell} mkd)^{O(k\ell)}.
\]
\end{theorem}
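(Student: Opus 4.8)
\medskip
\noindent\textbf{Proof strategy.}
The plan is to build the finite set $A\subset\R^{k}$ by reducing, in two stages, to a counting problem about connected components of sign conditions of a \emph{single}-exponential family of polynomials in the parameters $X_{1},\dots,X_{k}$ alone, and then to take one sample point per component.

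The first stage makes the fibres closed and bounded uniformly in $\x$ without changing their homotopy type. Intersecting $\R^{\ell}$ with a ball of infinitesimally large radius and replacing each strict atom $P>0$ (resp.\ $P<0$) by $P\ge\eps$ (resp.\ $P\le-\eps$) for an infinitesimal $\eps$, one obtains, over a real closed extension $\R'=\R\langle\eps\rangle$, a $\mathcal{P}^{*}$-closed set $S^{*}$ with $\#\mathcal{P}^{*}=O(m)$ and degrees $O(d)$, such that every fibre $S^{*}_{\x}$ is closed, bounded, and semi-algebraically homotopy equivalent to the extension of $S_{\x}$ (the standard infinitesimal-deformation construction). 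Carrying this out for the whole sign-condition stratification of $\R^{\ell}$ by $\mathcal{P}_{\x}$, rather than for one fixed Boolean formula, it suffices to control, uniformly in $\x$, the combinatorial type of the closed bounded arrangement cut out by $\mathcal{P}^{*}_{\x}$: once that type is fixed, the homotopy type of $S_{\x}$ is fixed for \emph{every} $\mathcal{P}$-semi-algebraic set $S$ simultaneously.

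The second stage is the technical heart. I would further deform $\mathcal{P}^{*}_{\x}$ by an ordered tuple of infinitesimals $\bar\eps$ into general position, so that every nonempty sign condition of the resulting family $\mathcal{P}^{*}_{\x,\bar\eps}$ has a cell-like realization, and then a Mayer--Vietoris / nerve argument (the homotopy colimit over the poset of sign conditions) identifies the homotopy type of the associated closed union --- hence that of $S^{*}_{\x}$ --- with a datum depending only on which sign conditions of $\mathcal{P}^{*}_{\x,\bar\eps}$ are nonempty and on their incidence relations $\sigma\subseteq\overline{\tau}$. The merit of the perturbation is that this finite datum is now stable: as $\x$ moves, each such event is expressible, via effective elimination of the quantifiers over $Y_{1},\dots,Y_{\ell}$ in the (bounded, hence well-behaved) defining formula, as the sign of a polynomial in $X_{1},\dots,X_{k}$. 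Collecting these polynomials into a family $\mathcal{C}\subset\R[X_{1},\dots,X_{k}]$ and tracking the elimination bounds, one gets that $\#\mathcal{C}$ and $\max_{C\in\mathcal{C}}\deg(C)$ are both bounded by $(2^{\ell}mkd)^{O(\ell)}$ --- single-exponential in $\ell$, not double-exponential, which is the whole point.

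The last stage is the classical Oleinik--Petrovsky--Thom--Milnor estimate: the number of connected components of realizations of all sign conditions of $\mathcal{C}$ is $(\#\mathcal{C}\cdot\max_{C}\deg(C))^{O(k)}=(2^{\ell}mkd)^{O(k\ell)}$. Let $A$ consist of one sample point in each such component. If $\x$ and the corresponding $\z\in A$ lie in the same component, a semi-algebraic path joining them inside it keeps all signs in $\mathcal{C}$ constant, hence keeps the combinatorial type of $\mathcal{P}^{*}_{\x,\bar\eps}$ constant along the path; by the determination statement above (equivalently, by Hardt's triviality theorem, Theorem~\ref{the:hardt}, applied to the now single-exponential-complexity bounded family over that path) one concludes that $S^{*}_{\x}$ is semi-algebraically homotopy equivalent to $S^{*}_{\z}$, and therefore $S_{\x}\simeq S_{\z}$ for every $\mathcal{P}$-semi-algebraic set $S$; descending from $\R'$ back to $\R$ is routine. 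The main obstacle is the second stage --- proving that a single-exponential amount of polynomial data in $X$ already pins down the homotopy type of the fibre, i.e.\ that the homotopy type of the fibre is a semi-algebraically constructible function of $\x$ of single-exponential complexity. This is precisely where the general-position infinitesimal deformation together with the nerve / homotopy-colimit analysis are needed, since applying Hardt's theorem directly through cylindrical algebraic decomposition yields only the double-exponential bound one is trying to beat.
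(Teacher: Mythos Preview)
First a framing remark: this theorem is quoted in the paper from \cite{BV06} without proof; the paper does not supply its own argument here. What one can compare against is the method of \cite{BV06}, which the present paper explicitly reproduces ``with similar techniques'' in the proof of Proposition~\ref{prop:main}.

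That method is not the one you sketch. After an infinitesimal perturbation putting the family in general position (your first stage is fine, and matches), one does \emph{not} attempt to read the homotopy type of a fibre off a finite ``combinatorial type'' via a nerve/homotopy-colimit argument and quantifier elimination. Instead one argues dynamically: the perturbed sign-condition decomposition is a Whitney stratification of a bounded set (Lemma~\ref{Whitney}); the restriction of the projection $\psi:\R^{\ell+k}\to\R^k$ to each stratum has a set of critical values, and together with the images of the low-dimensional strata these form a closed discriminant $G\subset\R^k$ of dimension $<k$; Thom's first isotopy lemma then gives that over each connected component of $\R^k\setminus G$ the entire stratified fibre is carried along by a semi-algebraic homeomorphism (Lemma~\ref{lem:discriminant}). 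The counting step bounds $b_0(\R^k\setminus G)=b_{k-1}(G)$ by Alexander duality, then controls $b_{k-1}(G)$ with the Mayer--Vietoris inequalities and the Gabrielov--Vorobjov--Zell bound on Betti numbers of projections (Proposition~\ref{prop:GVZ}); that is what produces the single-exponential estimate.

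Your second stage, as you yourself flag, is where the proposal breaks. The claim that after a general-position perturbation the homotopy type of $S^*_\x$ is determined by ``which sign conditions of $\mathcal P^*_{\x,\bar\eps}$ are nonempty and on their incidence relations'' is not true: in general position each sign cell is a smooth manifold, but it need not be contractible or even connected, so the nerve of the cover does not recover the homotopy type. (Already for a single generic degree-$d$ polynomial in two variables, $\{P\ge 0\}$ can have $\Theta(d^2)$ components while the sign-incidence poset is the same three-element chain.) Consequently the ``events'' you propose to encode by quantifier elimination --- nonemptiness and incidence --- do not carry enough information, and there is no single-exponential family $\mathcal C\subset\R[X]$ with the property you want obtained this way. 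The fix is exactly to replace the static nerve picture by the isotopy argument above: one never identifies the homotopy type combinatorially, one only shows it is locally constant off a discriminant whose complement has controllably few components.
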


A result similar to Theorem \ref{the:mainBV}
has been proved for semi-Pfaffian sets as well in \cite{BV06},
and has been extended to arbitrary o-minimal structures in
\cite{Basu07b}.
The bounds on the number of homotopy types proved in 
\cite{BV06,Basu07b} are all exponential in $\ell$ as well as $k$.
The following example, which appears in \cite{BV06}, shows that in this generality
the single exponential dependence on $\ell$ 
is unavoidable.

\begin{example}
\label{eg:exp}
Let
$P \in \R[Y_1,\ldots,Y_\ell] \hookrightarrow \R[Y_1,\ldots,Y_\ell,X]$
be the polynomial defined by
\[
P = \sum_{i=1}^{\ell} \prod_{j=0}^{d-1}(Y_i - j)^2.
\]
The algebraic set defined by $P=0$ in
$\R^{\ell+1}$
with co-ordinates $Y_1,\ldots,Y_\ell,X$,
consists of $d^\ell$ lines all parallel to the $X$ axis.
Consider now the semi-algebraic set~$S \subset \R^{\ell+1}$ defined by
\[
\displaylines{
(P = 0)\; \wedge \;
(0 \le X \le Y_1+ dY_2 + d^2 Y_3 + \cdots + d^{\ell-1}Y_\ell).
}
\]
It is easy to verify that, if $\pi:\> \R^{\ell+1} \to \R$ is the
projection map on the $X$ coordinate, then
the fibers~$S_{\x}$, 
for $\x \in \{0,1, 2, \ldots ,d^\ell-1 \} \subset \R$
are 0-dimensional and of different cardinality, 
and hence have different homotopy types.
\end{example}
%
\subsection{Semi-algebraic sets defined by quadratic inequalities}
%
One particularly interesting class of semi-algebraic sets
is the class of semi-algebraic sets defined
by quadratic inequalities. This class of sets
has been investigated from an algorithmic standpoint
\cite{Barvinok93,GP,Basu05a,Basu07a,BZ}, 
as well as from the point of view topological complexity,  
\cite{Agrachev,Barvinok97,BK}. 
 
Semi-algebraic sets defined by quadratic inequalities 
are distinguished
from arbitrary semi-algebraic sets by the fact that, 
if the number of  inequalities is fixed, then the
sum of their Betti numbers is bounded polynomially in the dimension.
The following bound was proved by Barvinok \cite{Barvinok97}.

\begin{theorem}
\label{the:barvinok}
Let $S \subset \R^\ell$ be a semi-algebraic set defined by the inequalities,
$P_1 \geq 0,\ldots,P_m \geq 0$, $\deg(P_i) \leq 2, 1 \leq i \leq m$.
Then,
$\displaystyle{
\sum_{i=0}^\ell b_i(S) \leq (m \ell)^{O(m)},
}
$
where $b_i(S)$ denotes the $i$-th Betti number of $S$.
\end{theorem}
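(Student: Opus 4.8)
The approach is the duality method of Agrachev and Barvinok. We first reduce to a set defined on a sphere by homogeneous quadratic inequalities, then trade it --- via an incidence correspondence over the standard simplex $\Delta^{m-1}$ --- for a family of sets each defined by a \emph{single} quadratic inequality, whose topology is trivial, and finally control the total space of that family with the Leray spectral sequence together with the classical Oleinik--Petrovsky--Thom--Milnor estimates. The reductions: intersecting $S$ with a large ball $\{\sum_j Y_j^2\le R^2\}$ changes neither the semi-algebraic homotopy type of $S$ (conic structure at infinity) nor the hypotheses (it raises $m$ to $m+1$), so we may assume $S$ closed and bounded; homogenizing each $P_i$ with a new variable $Y_0$ to a quadratic form $\bar P_i$ on $\R^{\ell+1}$ and intersecting with the unit sphere $\mathbf{S}^{\ell}\subset\R^{\ell+1}$ gives a closed set $\hat S=\{\xi\in\mathbf{S}^{\ell}:\bar P_i(\xi)\ge 0,\ 1\le i\le m\}$; the ball inequality makes $\hat S$ avoid the equator $\{\xi_0=0\}$, and since every $\bar P_i$ is even, $\hat S$ is a disjoint union of two copies of $S$, so it suffices to bound $\sum_i b_i(\hat S)$.

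Next, the duality. Fix a positive infinitesimal $\eps$ over $\R$; by the standard thickening argument --- Betti numbers are unchanged under real closed extension and under replacing the closed set $\hat S$ by the homotopy-equivalent open set $\hat S_\eps=\{\xi:\bar P_i(\xi)>-\eps\ \forall i\}$ --- and by Alexander duality on $\mathbf{S}^{\ell}$, it is enough to bound the Betti numbers of the compact complement $\hat S_\eps^c=\{\xi\in\mathbf{S}^{\ell}:\min_i\bar P_i(\xi)\le-\eps\}$. Writing $\langle\omega,\bar P(\xi)\rangle=\sum_i\omega_i\bar P_i(\xi)$ and using $\min_i\bar P_i(\xi)=\min_{\omega\in\Delta^{m-1}}\langle\omega,\bar P(\xi)\rangle$, set
\[
\widehat B_\eps=\bigl\{(\omega,\xi)\in\Delta^{m-1}\times\mathbf{S}^{\ell}:\ \langle\omega,\bar P(\xi)\rangle\le-\eps\bigr\},
\]
with the two projections $p_1\colon\widehat B_\eps\to\Delta^{m-1}$ and $p_2\colon\widehat B_\eps\to\mathbf{S}^{\ell}$. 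Then $p_2$ has image exactly $\hat S_\eps^c$ and, over every point of its image, the fiber of $p_2$ is the intersection of a simplex with a half-space, hence nonempty and contractible; by the Vietoris--Begle theorem $p_2$ induces an isomorphism $H^*(\hat S_\eps^c)\cong H^*(\widehat B_\eps)$. Thus everything comes down to bounding $\sum_i b_i(\widehat B_\eps)$.

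To do that, look at $p_1$. Its fiber over $\omega$ is $\{\xi\in\mathbf{S}^{\ell}:q_\omega(\xi)\le-\eps\}$ for the single quadratic form $q_\omega=\sum_i\omega_i\bar P_i$; on diagonalizing $q_\omega$ this set is empty when $q_\omega$ is positive semidefinite and otherwise deformation retracts onto the unit sphere of the span of its eigenvectors with non-positive eigenvalue, so it has total Betti number at most $2$ and its homotopy type is determined by the inertia of the symmetric matrix $M(\omega)=\sum_i\omega_i M_i$ of $q_\omega$. Since the entries of $M(\omega)$ are linear in $\omega$, the coefficients $c_1(\omega),\dots,c_{\ell+1}(\omega)$ of its characteristic polynomial have degree $\le\ell+1$ in the $m$ variables $\omega$, and --- $M(\omega)$ being symmetric, hence with only real eigenvalues --- Descartes' rule, which is exact for real-rooted polynomials, shows the inertia of $M(\omega)$ depends only on the sign vector of $(c_1,\dots,c_{\ell+1})$. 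Hence $\Delta^{m-1}$ is partitioned into the cells of the sign conditions of $\ell+1$ polynomials of degree $\le\ell+1$ in $m$ variables: by the Oleinik--Petrovsky--Thom--Milnor bounds there are $(m\ell)^{O(m)}$ such cells, each of total Betti number $(m\ell)^{O(m)}$, over each of which all fibers of $p_1$ are homotopy equivalent (the splitting of $\R^{\ell+1}$ into positive, negative and zero eigenspaces of $q_\omega$ varies continuously there). Putting this into the Leray spectral sequence $E_2^{p,q}=H^p(\Delta^{m-1};R^q(p_1)_*\Q)\Rightarrow H^{p+q}(\widehat B_\eps)$ --- whose sheaves $R^q(p_1)_*\Q$ are local systems of rank $\le 2$ on each stratum of a semi-algebraic stratification of $\Delta^{m-1}$ of complexity $(m\ell)^{O(m)}$ --- bounds $\sum_i b_i(\widehat B_\eps)$, and therefore $\sum_i b_i(S)$, by $(m\ell)^{O(m)}$.

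The step that needs genuine care is this last one: one must produce a semi-algebraic stratification of $\Delta^{m-1}$ of complexity $(m\ell)^{O(m)}$ over each stratum of which the Leray sheaves $R^q(p_1)_*\Q$ are honest local systems of bounded rank, so that $\sum_i b_i(\widehat B_\eps)\le\sum_{p,q}\dim_\Q H^p(\Delta^{m-1};R^q(p_1)_*\Q)$ is controlled by (number of strata)$\,\times\,$(rank $2$)$\,\times\,$(maximal number of cells in a triangulation of a stratum). The sign-condition decomposition above already fixes the homotopy types of the fibers; one may have to refine it, but this costs nothing in complexity since it only involves the linear pencil $\{M(\omega)\}$ of symmetric matrices, and over each refined stratum local triviality of $p_1$ follows from the continuous variation of the eigenspace decomposition. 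The remaining ingredients --- the conic structure at infinity, the infinitesimal thickening, and Alexander duality and the Vietoris--Begle theorem over an arbitrary real closed field --- are by now routine in quantitative semi-algebraic geometry.
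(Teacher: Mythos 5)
You should first note that the paper does not prove Theorem~\ref{the:barvinok} at all: it is quoted from Barvinok \cite{Barvinok97}, so there is no internal proof to compare with. Your proposal follows the Agrachev--Barvinok duality \cite{Agrachev,Barvinok97}, which is indeed the method behind the cited proof and is exactly the construction that the present paper parametrizes (your $\widehat B_\eps$, the index of $q_\omega$, and the Descartes-rule sign conditions on the characteristic polynomial are the unparametrized versions of $B_I$, $F_{I,j}$ and ${\mathcal H}_I$, cf.\ Proposition~\ref{prop:homotopy2} and Lemma~\ref{lem:sphere}). Up through the Vietoris--Begle step your reductions are sound, with one small inaccuracy: the fiber of $p_1$ over $\omega$ is $\{\xi:q_\omega(\xi)\le-\eps\}$, whose emptiness and homotopy type are governed by the eigenvalues of $M(\omega)$ relative to $-\eps$, i.e.\ by the inertia of $M(\omega)+\eps\,{\rm Id}$, not of $M(\omega)$ (an eigenvalue in $(-\eps,0)$ contributes nothing); the sign conditions must therefore be taken on the characteristic polynomial of the shifted matrix --- precisely the bookkeeping this paper performs with the infinitesimals $\bar\eps,\bar\delta$.

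The genuine gap is the final step, which you flag yourself and then dismiss as costing ``nothing in complexity.'' The Leray sheaves $R^q(p_1)_*\Q$ are only constructible with respect to the sign-condition stratification, and your bound ``(number of strata) $\times$ (rank $2$) $\times$ (cells in a triangulation of a stratum)'' is not justified at the required complexity: no $(m\ell)^{O(m)}$ bound on the size of a semi-algebraic triangulation of such a stratum is available (effective triangulation, e.g.\ via cylindrical decomposition, is doubly exponential in the number of variables $m$), and you cannot instead replace local-system cohomology by the ordinary Betti numbers of the strata, because the sphere bundles need not be orientable over a stratum --- a sign-condition cell inside $\Delta^{m-1}$ need not be simply connected, the positive eigenspace $L_j^+(\omega)$ can return to itself with reversed orientation along a loop, and twisted cohomology can strictly exceed untwisted cohomology (already $H^2$ of the real projective plane with coefficients in the orientation local system is nonzero while $b_2=0$). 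So the one inequality on which the whole bound rests is unproved; this is exactly where the real content of Barvinok's argument lies, and it is why the present paper, even just to control homotopy types in the parametrized setting, constructs the thickened bases $F'_{I,j}$ and the glued sphere bundles $C'_{I,j}$ and argues via bundle isomorphisms (\cite{Fuks}) rather than stratum-by-stratum with Leray sheaves. To close the gap you must either control the twisting explicitly (for instance by exhibiting a semi-algebraic double cover of each stratum trivializing the orientation of $L_j^+$, of complexity again $(m\ell)^{O(m)}$, and using the Gysin sequence), or replace the Leray argument by a Mayer--Vietoris computation over closed, infinitesimally thickened index sets in the spirit of Proposition~\ref{prop:MV}, bounding the cohomology of each total space by quantitative means; in either case the required bounds have to be established, not asserted.
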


An extension of Barvinok's bound to arbitrary ${\mathcal P}$-closed
(not just basic closed)
semi-algebraic sets defined in terms of quadratic inequalities has been
done recently in \cite{BP'R07}.

Now suppose that we have a parametrized family of sets, each defined 
in terms of $m$ quadratic inequalities.
More precisely, let 
\[
{\mathcal P} = \{P_1,\ldots,P_m\} \subset \R[Y_1,\ldots,Y_\ell,X_1,\ldots,X_k],
\]
with
$
{\rm deg}_Y(P_i) \leq 2, {\rm deg}_X(P_i) \leq d, 1 \leq i \leq m
$
($X_1,\ldots,X_k$ are the {\em parameters}),
and let $S \subset \R^{\ell+k}$ be a ${\mathcal P}$-closed semi-algebraic set.
Let $\pi: \R^{\ell+k} \rightarrow \R^k$ denote the projection on the
last $k$ co-ordinates.
Then, for each $\x \in \R^k$ the semi-algebraic set~$S_{\x}$ 
is defined by a Boolean formula involving at most
$m$ quadratic  polynomials in $Y_1,\ldots,Y_\ell$.

Bounding the number of topological types amongst the fibers, $S_{\x}, \x \in R^k$, 
is an interesting special case of the more general problem
mentioned in the last section.
In view of the topological simplicity
of semi-algebraic sets defined by few quadratic inequalities 
as opposed to general semi-algebraic sets
(cf. Theorem~\ref{the:barvinok}), 
one might expect a much tighter bound on the number
of topological types compared to the general case.
However one should be cautious, since a tight  bound on the Betti numbers 
of a class of semi-algebraic sets
does not automatically  imply a similar bound on 
the number of topological or even homotopy types occurring in that class.  

In this paper we consider the problem of bounding the number of 
\textit{stable homotopy types} 
(see Definition \ref{def:S-equivalence} below)
of fibers~$S_{\x}$, 
where $\pi$ and $S$ are as defined above. 
We prove a bound which for each fixed $m$, 
is polynomial in $\ell$ (the dimension of the fibers). In some 
special cases our bound can be extended to the number of homotopy types 
(see Theorem~\ref{the:union}). 

Our result can be seen as a follow-up to the recent work on 
bounding the number of homotopy types of fibers of general semi-algebraic
maps studied in \cite{BV06}. However, the bound in \cite{BV06} applied to the 
special case of sets defined by quadratic inequalities
would yield a bound exponential in both $k$ and $\ell$, 
as shown by Example~\ref{eg:exp}, 
where the semi-algebraic set~$S$ is defined in terms of three polynomials.

\begin{remark}
Note that the notions of homeomorphism type, homotopy
type and stable homotopy type are each strictly weaker than the previous
one, since two semi-algebraic sets might be stable homotopy equivalent,
without being homotopy equivalent 
(see \cite{Spanier}, p.~462), and also homotopy
equivalent without being homeomorphic. 
However, two closed and bounded semi-algebraic sets which are
stable homotopy equivalent have isomorphic homology groups.
\end{remark}

\subsection{Prior and Related Work}
%
Since sets defined by quadratic equalities and inequalities
are the simplest class of topologically non-trivial semi-algebraic
sets, the problem of classifying such sets topologically has 
attracted the attention of many researchers.

Motivated by problems related to stability of maps,
Wall \cite{Wall}  considered the special case of real 
algebraic sets defined by two simultaneously
diagonalizable quadratic forms in $\ell$ variables. 
He obtained a full topological classification
of such varieties making use of Gale diagrams (from the theory of 
convex polytopes).
In our notation,
letting 
$$
\displaylines{
Q_1=\sum_{i=1}^\ell X_iY_i^2, \cr
Q_2=\sum_{i=1}^\ell X_{i+\ell} Y_i^2,
}
$$
and
\[
S=\{(\y,\x)\in\re^{3\ell} \mid \quad\parallel\y\parallel=1,\quad Q_1(\y,\x)=Q_2(\y,\x)=0\},
\]
Wall obtains as a consequence  of his classification theorem,
that the number of different topological types of 
fibers~$S_{\x}$ 
is  bounded by $2^{\ell-1}$.
Notice that in this case
the number of parameters ($X_1,\ldots,X_{2\ell}$), as well
as the number of variables ($Y_1,\ldots,Y_\ell$), 
are both $O(\ell)$.
Similar results were also obtained by L\'opez \cite{Lopez}
using different techniques.
Much more recently Briand \cite{Briand07} 
has obtained explicit characterization
of the isotopy classes of real varieties defined by two general 
conics in $\re \PP^2$
in terms of the coefficients of the polynomials. His method 
also gives a decision
algorithm for testing whether two such given varieties are isotopic.

In another direction
Agrachev \cite{Agrachev} studied the topology of semi-algebraic sets
defined by quadratic inequalities, and he defined a certain spectral sequence
converging to the homology groups of such sets. A parametrized version
of Agrachev's construction is in
fact a starting point of our proof of the main theorem in this paper. 

%
\section{Main Result}
The main result of this paper is the following theorem.
\begin{theorem}
\label{the:main}
Let $\R$ be a real closed field and let
\[
{\mathcal P} = \{P_1,\ldots,P_m\} \subset \R[Y_1,\ldots,Y_\ell,X_1,\ldots,X_k],
\]
with
${\rm deg}_Y(P_i) \leq 2, {\rm deg}_X(P_i) \leq d, 1 \leq i \leq m$.
Let $\pi: \R^{\ell+k} \rightarrow \R^k$ be the projection on the
last $k$ co-ordinates. 
Then,
for any ${\mathcal P}$-closed semi-algebraic set~$S \subset \R^{\ell+k}$,
the number of stable homotopy types amongst the fibers, $S_{\x}$, is bounded by 
$\displaystyle{
(2^m\ell k d)^{O(mk)}.
}
$
\end{theorem}
\begin{remark}
Note that the bound in Theorem \ref{the:main} (unlike that in
Theorem \ref{the:mainBV}) is polynomial in $\ell$ for fixed $m$ and
$k$. 
The exponential dependence on $m$ is unavoidable, as can be seen from
a slight modification of Example \ref{eg:exp} above.
Consider the semi-algebraic set~$S \subset \R^{\ell+1}$ defined by
\[
\displaylines{
Y_i(Y_i-1) = 0, \; 1 \leq i \leq  m \leq \ell, \cr
0 \leq X \leq Y_1 +2\cdot Y_2 + \ldots + 2^{m-1}\cdot Y_m.
}
\]

Let $\pi: \R^{\ell+1} \rightarrow \R$ be the projection on the
$X$-coordinate. Then, 
the sets~$S_{\x}$, ${\x \in \{0,1\ldots,2^{m-1}\}}$, 
have different number of connected
components, and hence have distinct (stable) 
homotopy types.
\end{remark}

\begin{remark}
Note that the technique used to prove Theorem \ref{the:mainBV}
in \cite{BV06} does not directly produce better bounds in the quadratic case,
and hence we need a new approach to prove a substantially better bound
in this case. 
However, due to technical reasons, we only obtain 
a bound on the number of stable homotopy types, rather than homotopy types.
\end{remark}

\section{Mathematical Preliminaries}
%
We first need to fix some notation and a few preliminary results
needed later in the proof.
%
\subsection{Some Notation}\label{sub:notations}
Let $\R$ be  a real closed field. For an element $a \in \R$ introduce
\[
\s(a) = \Biggl\{
\begin{tabular}{ccc}
0 & \mbox{ if }  $a=0$,\\
1 & \mbox{ if } $a> 0$,\\
$-1$& \mbox{ if } $a< 0$.
\end{tabular}
\]
If ${\mathcal P} \subset {\R} [X_1, \ldots , X_k]$ is finite, we write the
{\em set of zeros} of ${\mathcal P}$ in ${\R}^k$ as
$$
\ZZ({\mathcal P})=\Bigl\{ \x \in {\R}^k\mid\bigwedge_{P\in{\mathcal P}}P(\x)= 0 \Bigr\}.
$$
A  {\em sign condition} $\sigma$ on
${\mathcal P}$ is an element of $\{0,1,- 1\}^{\mathcal P}$.
The {\em realization of the sign condition
$\sigma$} is the basic semi-algebraic set
$$
\RR(\sigma) = \Bigl\{ \x \in {\R}^k\;\mid\;
\bigwedge_{P\in{\mathcal P}} \s({P}(\x))=\sigma(P) \Bigr\}.
$$
A sign condition $\sigma$ is {\em realizable} if $\RR(\sigma) \neq \emptyset$.
We denote by ${\rm Sign}({\mathcal P})$ the set of realizable sign conditions
on ${\mathcal P}.$
For $\sigma \in {\rm Sign}({\mathcal P})$ we define the {\em level of} $\sigma$
as the cardinality $\#\{P \in {\mathcal P}| \sigma(P) = 0 \}$.
For each level $p$, $0 \leq p \leq \# {\mathcal P}$, we denote by
${\rm Sign}_{p}({\mathcal P})$
the subset of ${\rm Sign}({\mathcal P})$ of elements of level $p$.
Moreover, for a sign condition $\sigma$ let
\[
{\mathcal Z}(\sigma) = \Bigl\{ \x \in {\R}^k\;\mid\;
\bigwedge_{P\in{\mathcal P},\ \sigma (P)=0} P(\x)=0 \Bigr\}.
\]
%

\subsection{Use of Infinitesimals}
%
Later in the paper,
we will extend the ground field $\R$ by infinitesimal
elements.
We denote by $\R\langle \zeta\rangle$  the real closed field of algebraic
Puiseux series in $\zeta$ with coefficients in $\R$ (see \cite{BPR03} for
more details). 
The sign of a Puiseux series in $\R\langle \zeta\rangle$
agrees with the sign of the coefficient
of the lowest degree term in
$\zeta$. 
This induces a unique order on $\R\langle \zeta\rangle$ which
makes $\zeta$
infinitesimal: $\zeta$ is positive and smaller than
any positive element of $\R$.
When $a \in \R\la \zeta \ra$ is bounded 
from above and below by some elements of $\R$,
$\lim_\zeta(a)$ is the constant term of $a$, obtained by
substituting 0 for $\zeta$ in $a$.
Given a semi-algebraic set
$S$ in ${\R}^k$, the {\em extension}
of $S$ to $\R'$, denoted $\E(S,\R'),$ is
the semi-algebraic subset of ${ \R'}^k$ defined by the same
quantifier free formula that defines $S$.
The set~$\E(S,\R')$ is well defined (i.e. it only depends on the set
$S$ and not on the quantifier free formula chosen to describe it).
This is an easy consequence of the Tarski-Seidenberg transfer principle 
(see for instance
\cite{BPR03}).

We will also need the following remark about extensions which
is again a consequence of the Tarski-Seidenberg transfer principle.
\begin{remark}
\label{rem:transfer}
Let $S,T$ be two closed and bounded semi-algebraic subsets of $\R^k$, and
let $R'$ be a real closed extension of $\R$. Then, $S$ and $T$ are
semi-algebraically homotopy equivalent if and only if
$\E(S,\R')$ and $\E(T,\R')$  are semi-algebraically homotopy equivalent.
\end{remark}

We will need a few results from algebraic topology, which we state here
without proof referring the reader to papers where the proofs appear. 

The following inequalities are consequences of the Mayer-Vietoris exact
sequence.
\subsection{Betti numbers and Mayer-Vietoris Inequalities}
%
We will use the following notation.
\begin{notation}
\label{not:[m]}
For each $m \in \Z_{\geq 0}$ we will denote by $[m]$ the set $\{1,\ldots, m\}$.
\end{notation}

\begin{proposition}[Mayer-Vietoris inequalities]
\label{prop:MV}
Let the subsets 
$W_1, \ldots , W_r \subset \R^n$ be all open or all closed.
Then, for each $i \geq 0$ we have,
\begin{equation}\label{eq:MV1}
{b}_i \left( \bigcup_{1 \le j \le r} W_j \right) \le \sum_{J \subset [r]}
{b}_{i- (\# J) +1} \left( \bigcap_{j \in J} W_j \right)
\end{equation}
and
\begin{equation}\label{eq:MV2}
{b}_i \left( \bigcap_{1 \le j \le r} W_j \right) \le \sum_{J \subset [r]}
{b}_{i + (\# J) -1} \left( \bigcup_{j \in J} W_j \right).
\end{equation}
\end{proposition}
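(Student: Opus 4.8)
\textbf{Proof strategy for Proposition~\ref{prop:MV}.}
The plan is to derive both inequalities from the generalized Mayer--Vietoris exact sequence (the Mayer--Vietoris spectral sequence) associated to a cover of a space by finitely many subspaces that are all open or all closed. I will set up the bookkeeping carefully and then read off the two inequalities by a dimension count along the exact sequence, using only the weak bound that in an exact sequence the rank of a middle term is at most the sum of the ranks of its two neighbors.

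First I would recall the generalized Mayer--Vietoris sequence. Given $W_1,\ldots,W_r \subset \R^n$ all open or all closed, set for $0 \le p \le r-1$
\[
C^p \;=\; \bigoplus_{J \subset [r],\ \# J = p+1} \HH_*\!\left(\bigcap_{j \in J} W_j\right),
\]
with the appropriate (co)homological indexing and alternating-sign differentials given by restriction maps to larger intersections; there is a long exact sequence
\[
\cdots \to \HH_i\!\left(\bigcup_{j} W_j\right) \to C^0_i \to C^1_i \to \cdots \to C^{r-1}_i \to \cdots
\]
linking the homology of the union to the intersections (for the closed case one uses that a finite closed cover of a paracompact space is a "good" situation for \v{C}ech--type arguments; for the open case this is the classical Mayer--Vietoris spectral sequence). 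The precise statement I will cite is that there is a spectral sequence with $E_1^{p,q} = \bigoplus_{\# J = p+1} \HH_q(\bigcap_{j\in J} W_j)$ converging to $\HH_{p+q}(\bigcup_j W_j)$ for \eqref{eq:MV1}, and dually a spectral sequence with $E_1^{p,q} = \bigoplus_{\# J = p+1} \HH_q(\bigcup_{j \in J} W_j)$ converging to $\HH_{p+q}(\bigcap_j W_j)$ for \eqref{eq:MV2}. These are standard; I would refer to \cite{BPR03} or a standard algebraic topology text for the construction, since the excerpt already adopts the convention of citing such results without reproving them.

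Next, the dimension count. Fix $i \ge 0$. In the first spectral sequence, $b_i(\bigcup_j W_j) \le \sum_{p+q=i} \dim E_\infty^{p,q} \le \sum_{p+q = i} \dim E_1^{p,q}$, since each $E_\infty^{p,q}$ is a subquotient of $E_1^{p,q}$. Now $\dim E_1^{p,q} = \sum_{\# J = p+1} b_q(\bigcap_{j \in J} W_j)$, so writing $J$ for a subset of $[r]$ of size $p+1$ and substituting $q = i - p = i - (\#J) + 1$ gives exactly
\[
b_i\!\left(\bigcup_{1 \le j \le r} W_j\right) \;\le\; \sum_{J \subset [r]} b_{i - (\#J) + 1}\!\left(\bigcap_{j \in J} W_j\right),
\]
which is \eqref{eq:MV1}; the terms with $\#J = 0$ (i.e. $J = \emptyset$) and those where the homological degree is negative contribute nothing. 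The second inequality \eqref{eq:MV2} follows identically from the dual spectral sequence, with $q = i - p = i + (\#J) - 1$ after re-indexing $p = \#J - 1$, giving $b_i(\bigcap_j W_j) \le \sum_{J} b_{i + (\#J) - 1}(\bigcup_{j \in J} W_j)$.

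The main obstacle is not the dimension count, which is routine, but making sure the generalized Mayer--Vietoris sequence is genuinely available in both the open and the closed case over a real closed field $\R$ and for semi-algebraic sets: one needs the cover to be such that the \v{C}ech-to-derived-functor spectral sequence degenerates appropriately, which is guaranteed for finite open covers always, and for finite closed covers of closed (hence, in the semi-algebraic setting, triangulable) sets. Since the proposition is stated for arbitrary all-open or all-closed finite families in $\R^n$ and the surrounding paper works with closed and bounded semi-algebraic sets throughout, I would either add the (harmless) hypothesis that in the closed case the sets are closed semi-algebraic subsets of a fixed closed bounded set, or simply invoke semi-algebraic triangulation to reduce to the simplicial case where the generalized Mayer--Vietoris sequence is elementary. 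With that caveat handled, both inequalities drop out of the spectral sequence comparison above.
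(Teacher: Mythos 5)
Your argument for inequality (\ref{eq:MV1}) is fine: the Mayer--Vietoris (descent) spectral sequence with $E_1^{p,q}=\bigoplus_{\#J=p+1}\HH_q\bigl(\bigcap_{j\in J}W_j\bigr)$ converging to $\HH_{p+q}\bigl(\bigcup_j W_j\bigr)$, together with the subquotient bound $\dim E_\infty\le\dim E_1$, gives exactly the stated estimate. (The paper itself offers no argument here -- it only cites \cite{BPR03}, where the proof is an induction on $r$ using the ordinary two-set Mayer--Vietoris exact sequence; your spectral sequence route is a legitimate alternative for (\ref{eq:MV1}).)

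The second half, however, has a genuine gap. The ``dual'' spectral sequence you invoke, with $E_1^{p,q}=\bigoplus_{\#J=p+1}\HH_q\bigl(\bigcup_{j\in J}W_j\bigr)$ converging to $\HH_{p+q}\bigl(\bigcap_j W_j\bigr)$, cannot exist with that indexing: it would yield $b_i\bigl(\bigcap_j W_j\bigr)\le\sum_J b_{i-\#J+1}\bigl(\bigcup_{j\in J}W_j\bigr)$, i.e.\ a \emph{downward} degree shift, and this is false. For instance, take $r=2$ and two closed ``combs'' $W_1,W_2\subset\R^2$ (a bar with $N$ teeth attached above, and a parallel bar with the same teeth attached below): each $W_i$ is contractible and $W_1\cup W_2$ is connected, so the right-hand side in degree $i=0$ would be $2$, while $b_0(W_1\cap W_2)=N$; only the upward-shifted term $b_1(W_1\cup W_2)=N-1$ from (\ref{eq:MV2}) saves the inequality. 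Your own dimension count betrays the problem: the substitution ``$q=i-p=i+(\#J)-1$'' is inconsistent, since $i-p=i-\#J+1$. The correct dual object is a second-quadrant (co)homological descent sequence in which the total degree is $q-p$, not $p+q$, so the union over $J$ enters in degree $i+\#J-1$; alternatively, (\ref{eq:MV2}) follows cleanly by induction on $r$ from the two-set exact sequence $\cdots\to\HH_{i+1}(A\cup B)\to\HH_i(A\cap B)\to\HH_i(A)\oplus\HH_i(B)\to\cdots$ applied with $A=W_1\cap\cdots\cap W_{r-1}$, $B=W_r$ (this is the argument in the cited reference), or, for compact sets, from (\ref{eq:MV1}) via Alexander duality in a sphere. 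As written, the proof of (\ref{eq:MV2}) does not go through and needs one of these repairs; your caveats about the closed case (tautness/triangulability) are sensible and not the issue.
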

\begin{proof}
See \cite{BPR03}.
\end{proof}
The following proposition gives a bound on the Betti numbers of the
projection $\pi(V)$ of a closed and bounded semi-algebraic set~$V$
in terms of the number and degrees of polynomials defining $V$.

\begin{proposition}\cite{GVZ}
\label{prop:GVZ}
Let $R$ be a real closed field and let 
$\psi:\R^{m + k} \to \R^k$ be the projection map on to last $k$ co-ordinates. 
Let $V \subset {\R}^{m+k}$ be a closed and bounded semi-algebraic set 
defined by a Boolean formula with $s$ distinct polynomials of 
degrees not exceeding $d$. 
Then the $n$-th Betti number of the projection
\[
{\rm b}_n (\psi (V)) \le (nsd)^{O(k+nm)}.
\]
\end{proposition}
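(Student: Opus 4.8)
The plan is to recover $\psi(V)$, through homological degree $n$, from a truncated simplicial resolution built out of the proper surjection $\psi_V\colon V \to \psi(V)$, and then to bound the Betti numbers of the semi-algebraic sets appearing in this resolution by the classical single-exponential bound for Betti numbers of semi-algebraic sets.

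Since $V$ is closed and bounded, so is $\psi(V)$, and $\psi_V\colon V \to \psi(V)$ is a proper, closed, surjective semi-algebraic map. Writing
\[
W_j \;=\; \underbrace{V \times_{\psi(V)} \cdots \times_{\psi(V)} V}_{j\ \text{copies}} \;=\; \{(p_1,\ldots,p_j)\in V^j \mid \psi(p_1) = \cdots = \psi(p_j)\}
\]
for the $j$-fold fibered power of $\psi_V$, there is a \emph{descent spectral sequence} with $E^1_{p,q} = H_q(W_{p+1})$, with differentials the alternating sums of the face maps, converging to $H_{p+q}(\psi(V))$. The existence and convergence of this spectral sequence is the technical heart of the argument and is exactly what is proved in \cite{GVZ}: because $\psi_V$ is a closed surjection which, by semi-algebraic triviality (Theorem~\ref{the:hardt}), admits semi-algebraic sections locally over a finite partition of $\psi(V)$, the geometric realisation of the \v{C}ech nerve $W_\bullet$ of $\psi_V$ is semi-algebraically homotopy equivalent to $\psi(V)$. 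I would simply invoke \cite{GVZ} for this. Truncating the spectral sequence at total degree $n$ gives
\[
b_n(\psi(V)) \;\le\; \sum_{p+q=n}\dim E^1_{p,q} \;\le\; \sum_{p=0}^{n} b_{\,n-p}(W_{p+1}).
\]

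It then remains to bound $\sum_{i\ge 0} b_i(W_j)$ for $1 \le j \le n+1$. Discarding the redundant copies of the base coordinates identifies $W_j$ semi-algebraically with the subset of $\R^{mj+k}$, with coordinates $(Y^{(1)},\ldots,Y^{(j)},X)$, defined by $\bigwedge_{i=1}^{j}\theta(Y^{(i)},X)$, where $\theta$ is the Boolean formula defining $V$, $Y\in\R^m$ denoting the fiber coordinates and $X\in\R^k$ the base coordinates. This is a closed and bounded semi-algebraic set in $mj+k$ variables, defined by a Boolean formula involving at most $js$ distinct polynomials of degree at most $d$. Hence, by the standard bound on the sum of the Betti numbers of a semi-algebraic set (see \cite{BPR03}),
\[
\sum_{i\ge 0} b_i(W_j) \;\le\; (jsd)^{O(mj+k)} \;\le\; (nsd)^{O(mn+k)} \qquad (1 \le j \le n+1).
\]
Substituting this into the previous display and absorbing the factor $n+1$ into the exponent yields
\[
b_n(\psi(V)) \;\le\; (n+1)\,(nsd)^{O(mn+k)} \;=\; (nsd)^{O(k+nm)},
\]
as claimed.

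The only non-routine step is the first one: establishing the descent spectral sequence, i.e.\ certifying that the compact closed surjection $\psi_V$ is ``effective'', so that the \v{C}ech nerve of $\psi_V$ realises $\psi(V)$ up to homotopy; this is precisely what we borrow from \cite{GVZ}, and it ultimately rests on properness together with Hardt's local triviality. Everything after that is bookkeeping: counting the variables and polynomials in the fibered powers $W_j$ and feeding them to the standard Betti-number estimate. One could instead run the argument with the fibered \emph{joins} of $V$ over $\psi(V)$ in place of the fibered powers: the $j$-fold fibered join has $(j-2)$-connected fibers over $\psi(V)$, so by the Vietoris--Begle theorem its homology agrees with that of $\psi(V)$ through degree $n$ once $j \ge n+2$, and its defining data is of the same order of complexity, giving the same final bound.
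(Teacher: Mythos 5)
Your outline is essentially the argument of the cited reference \cite{GVZ}: the paper itself offers no proof of Proposition~\ref{prop:GVZ} beyond the citation, and the result is obtained there just as you describe, via the descent (\v{C}ech-nerve) spectral sequence of the proper semi-algebraic surjection $\psi_V$ together with the standard single-exponential Betti-number bounds applied to the fibered powers $W_j \subset \R^{mj+k}$, which are indeed defined by at most $js$ polynomials of degree at most $d$. Your bookkeeping, $b_n(\psi(V)) \le \sum_{p=0}^{n} b_{n-p}(W_{p+1}) \le (n+1)(nsd)^{O(nm+k)} = (nsd)^{O(k+nm)}$, is correct, so modulo the spectral-sequence input that you rightly leave to \cite{GVZ}, the proposal is sound and takes the same route as the source the paper relies on.
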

\begin{proof}
See \cite{GVZ}.
\end{proof}

\subsection{Stable homotopy equivalence}
\label{subsec:stable}
For any finite 
CW-complex $X$ we will denote by $\Suspension(X)$ the suspension of 
$X$.

Recall from \cite{Spanier-Whitehead}
that for two finite CW-complexes $X$ and $Y$, an element of
\begin{equation}
\label{eqn:defofS-maps}
\{X;Y\}= \varinjlim_i \; [\Suspension^i(X),\Suspension^i(Y)]
\end{equation}
is called an {\em S-map} (or map in the {\em suspension category}).
(When the context is clear we will sometime denote an S-map $f \in \{X;Y\}$ by
$f: X \rightarrow Y$).

\begin{definition}
\label{def:S-equivalence}
An S-map $f \in \{X;Y\}$ is an S-equivalence 
(also called a stable homotopy equivalence) if it admits an
inverse $f^{-1} \in \{Y;X\}$. In this case we say that $X$ and $Y$ are
stable homotopy equivalent.
\end{definition}

If $f \in \{X;Y\}$ is an S-map, then $f$ induces a homomorphism,
\[
f_* : \HH_*(X) \rightarrow \HH_*(Y).
\]

The following theorem characterizes stable homotopy equivalence in terms of
homology.

\begin{theorem}\cite{Spanier}
\label{the:stable}
Let $X$ and $Y$ be two finite CW-complexes. 
Then $X$ and $Y$ are stable homotopy
equivalent if and only if  there exists an S-map
$f \in \{X;Y\}$ 
which induces isomorphisms $f_* : \HH_i(X) \rightarrow \HH_i(Y)$
(see \cite{Dieudonne}, pp. 604).
\end{theorem}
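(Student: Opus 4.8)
The plan is to reduce the parametrized family $\{S_\x\}_{\x \in \R^k}$ to a family of closed and bounded sets in a controlled way, and then to produce a single semi-algebraic family of "Agrachev-type" CW-complexes, defined over the parameter space, whose stable homotopy types capture those of the fibers $S_\x$, and whose number is then bounded by a cell-decomposition argument on the parameter space.

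First I would make the sets closed and bounded. Intersecting $S$ with a large ball and replacing strict inequalities by non-strict ones using an infinitesimal $\zeta$ (standard techniques from \cite{BPR03}), I may assume each fiber $S_\x$ is closed and bounded; by Remark~\ref{rem:transfer} this does not change stable homotopy types up to the bookkeeping of finitely many infinitesimals, at the cost of replacing $\R$ by $\R\la\zeta\ra$ and increasing $d$ and $m$ by a bounded factor. Next, the key idea is a parametrized version of Agrachev's construction, alluded to in the introduction. For a fixed $\x$, the set $S_\x$ is $\{\y : P_1(\y,\x)\ge 0,\dots,P_m(\y,\x)\ge 0,\ \|\y\|\le c\}$ (after the reduction). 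Writing $\Omega = \{\omega \in \R^m : \omega_i \le 0, \sum \omega_i^2 = 1\}$ and $\Sphere^{\ell}$ for the unit sphere containing (the homogenization of) the $\y$-ball, Agrachev considers the set
\[
B_\x = \{(\omega,\y) \in \Omega \times \Sphere^\ell : \textstyle\sum_i \omega_i P_i(\y,\x) \ge 0\},
\]
together with the two projections to $\Omega$ and to $\Sphere^\ell$. The projection onto $\Sphere^\ell$ has image (a set homotopy equivalent to) $S_\x$ — more precisely, the relevant Agrachev spectral sequence, built from the fibers of the projection $B_\x \to \Omega$ (which are intersections of $\le m$ quadrics, hence topologically simple), converges to the (co)homology of $S_\x$. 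The crucial point, which I would make precise, is that the projection $p_\x : B_\x \to S_\x$ (onto the second factor) is not merely a homology equivalence but induces an \emph{S-map} which is a stable homotopy equivalence: this is exactly the kind of statement for which Theorem~\ref{the:stable} is the right tool, since $B_\x \to S_\x$ has fibers that are unions of spheres (intersections of half-spheres cut out by quadratic forms) and one checks the induced map on homology is an isomorphism via the Leray spectral sequence / the Agrachev spectral sequence degenerating appropriately after one suspension.

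Having replaced $S_\x$ by $B_\x$ up to stable homotopy, the final step is a counting argument over the parameter space. The family $\{B_\x\}_{\x \in \R^k}$ is the family of fibers of a \emph{single} semi-algebraic set $B \subset \Omega \times \Sphere^\ell \times \R^k$, closed and bounded, defined by $O(m)$ polynomials whose degrees in the $(\omega,\y)$-variables are $2$ (linear in $\omega$, quadratic in $\y$) and in $\x$ are $\le d$. Now I would decompose $\R^k$ into semi-algebraically connected pieces over which the relevant data — the signs of all the subdiscriminants, leading coefficients, and the polynomials occurring in Agrachev's construction, restricted along suitable critical loci — are constant, so that over each piece the family $B_\x$ is semi-algebraically trivial (Theorem~\ref{the:hardt}), hence all fibers are homeomorphic, a fortiori stable homotopy equivalent. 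The number of such pieces is what must be bounded by $(2^m \ell k d)^{O(mk)}$. For this I would not invoke the crude double-exponential cylindrical decomposition, but rather the single-exponential machinery: the polynomials one needs to control are (a) the $\le m$ polynomials $P_i$, together with (b) the polynomials defining the critical points of the projection $B \to \Omega \times \R^k$ along each stratum — and the count of connected components of the realizations of all sign conditions of a family of $s$ polynomials of degree $D$ in $N$ variables is $(s D)^{O(N)}$, combined with Proposition~\ref{prop:GVZ} for controlling the Betti numbers / number of homotopy types of the relevant projections. Bookkeeping: $N = O(m + k)$ after the construction (the $\ell$ variables $\y$ get fibered away into the topologically simple quadric intersections, which is precisely where the polynomial-in-$\ell$ behavior comes from), $s = 2^{O(m)}$ (Boolean combinations of the $m$ forms), $D = O(d)$ in the parameters, yielding $(2^m \ell k d)^{O(mk)}$.

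\textbf{Main obstacle.} The hard part will be making rigorous the parametrized Agrachev construction: specifically, showing that the map $B_\x \to S_\x$ can be chosen to vary semi-algebraically in $\x$ and that it induces a \emph{stable} homotopy equivalence uniformly — this is the step where we genuinely lose homotopy type and only retain stable homotopy type, because the Agrachev spectral sequence computes homology and one needs Theorem~\ref{the:stable} (via an S-map inducing homology isomorphisms) rather than an honest homotopy equivalence. A subsidiary difficulty is organizing the stratification of $\R^k$ so that the number of strata is genuinely polynomial in $\ell$: one must verify that none of the auxiliary polynomials introduced (critical-point equations, etc.) reintroduce an exponential dependence on $\ell$, which requires that all the $\ell$-variables appear only through the $\le m$ quadratic forms and their homogenizations, so that the effective ambient dimension for the counting is $O(m+k)$ rather than $O(\ell + k)$.
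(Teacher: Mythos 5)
Your proposal does not address the statement you were asked to prove. The statement is Theorem \ref{the:stable}: for finite CW-complexes $X$ and $Y$, stable homotopy equivalence holds if and only if there is an S-map $f \in \{X;Y\}$ inducing isomorphisms $f_*:\HH_i(X) \to \HH_i(Y)$ for all $i$. This is a classical result in stable homotopy theory (a stable Whitehead theorem), which the paper simply cites to \cite{Spanier} and \cite{Dieudonne} and then \emph{uses} as a tool (e.g.\ in Lemma \ref{lem:hocolimit2} and via Spanier--Whitehead duality). What you have written instead is a plan for the paper's main result, Theorem \ref{the:main} (the $(2^m\ell k d)^{O(mk)}$ bound on the number of stable homotopy types of fibers), in which Theorem \ref{the:stable} appears only as an ingredient you invoke. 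However well that plan tracks the paper's actual strategy, it contains no argument for the statement at hand, so as a proof of Theorem \ref{the:stable} it is a complete gap.

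For the record, a proof of the actual statement runs along these lines. One direction is immediate: an S-equivalence $f \in \{X;Y\}$ with inverse $g \in \{Y;X\}$ induces homomorphisms on homology (via the suspension isomorphism, which identifies $\HH_i(X)$ with $\HH_{i+N}(\Suspension^N X)$) satisfying $g_* \circ f_* = \mathrm{id}$ and $f_* \circ g_* = \mathrm{id}$, so $f_*$ is an isomorphism in every degree. For the converse, represent $f$ by an actual cellular map $\phi: \Suspension^N X \to \Suspension^N Y$ for $N \geq 2$; then $\Suspension^N X$ and $\Suspension^N Y$ are simply connected finite CW-complexes and $\phi$ induces isomorphisms on all homology groups (again by the suspension isomorphism), so by the homology Whitehead theorem $\phi$ is a homotopy equivalence. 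A homotopy inverse $\psi$ of $\phi$ then represents an element of $\{Y;X\}$, and since suspension preserves homotopy equivalences and composition in the suspension category is computed on representatives after further suspension, the class of $\psi$ is a two-sided inverse of $f$ in the S-category; hence $f$ is an S-equivalence and $X$, $Y$ are stable homotopy equivalent. If you intend to submit a proof of Theorem \ref{the:stable}, it should be along these lines rather than the fiberwise Agrachev construction you sketched.
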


\subsubsection{Spanier-Whitehead duality}
%
In order to compare the complements of closed and 
bounded semi-algebraic sets which are homotopy equivalent,
we will use the duality theory due to Spanier and Whitehead
\cite{Spanier-Whitehead}.
We will need the following facts about Spanier-Whitehead duality
(see \cite{Dieudonne}, pp. 603 for more details).
Let $X \subset \Sphere^n$ be a finite CW-complex. Then there exists
(up to stable homotopy equivalence)
a dual complex, denoted $D_n X \subset \Sphere^n \setminus X$.
The dual complex $D_n X$ is defined only up to S-equivalence.
In particular, any deformation retract of $\Sphere^n \setminus X$ represents
$D_n X$.
Moreover, the functor $D_n$ has the following property.
If $Y \subset \Sphere^n$ is another finite CW-complex,
and  
the S-map represented by $\phi: X \rightarrow Y$ 
is a stable homotopy equivalence, then
there exists a stable homotopy equivalence 
$D_n \phi$.
Moreover, if the map $\phi: X \rightarrow Y$ is an inclusion, then 
the dual S-map $D_n \phi$ is also represented by a corresponding inclusion.

\begin{remark}
\label{rem:spanier-whitehead}
Note that, since Spanier-Whitehead duality theory deals only with
finite polyhedra over $\re$, it extends without difficulty to general real
closed fields using the Tarski-Seidenberg transfer principle.
\end{remark}
%
\subsection{Homotopy colimits}
%
Let ${\mathcal A} = \{A_1,\ldots,A_n\}$,  where each $A_i$ is a sub-complex
of a finite CW-complex.

Let $\Delta_{[n]}$ denote the standard simplex of dimension $n-1$ with
vertices in $[n]$.
For $I \subset [n]$, we denote by $\Delta_I$ 
the $(\#I-1)$-dimensional face of $\Delta_{[n]}$ corresponding
to $I$, and by $A_I$ the CW-complex 
$\displaystyle{\bigcap_{i \in I} A_i}$.

The homotopy colimit, $\hocolimit(\A)$,  is a CW-complex defined as follows.
\begin{definition}
\label{def:hocolimit}
\[ 
\hocolimit(\A) =  \bigcupdot_{I \subset [n]} \Delta_I \times A_I/\sim
\]
where the equivalence relation $\sim$ is defined as follows.

For $I \subset J \subset [n]$, let $s_{I,J}: \Delta_I \hookrightarrow \Delta_J$
denote the inclusion map of the face $\Delta_I$ in $\Delta_J$, and let
$i_{I,J}: A_J \hookrightarrow A_I$ denote the inclusion map of
$A_J$ in $A_I$.

Given $({\mathbf s},\x) \in \Delta_I \times A_I$ and 
$({\mathbf t},\y) \in \Delta_J \times A_J$ with $I \subset J$, 
then $({\mathbf s},\x) \sim 
({\mathbf t},\y)$ if and only if
${\mathbf t} = s_{I,J}({\mathbf s})$ and $\x = i_{I,J}(\y)$.
\end{definition}

We have a obvious map 
\[
f: \hocolimit(\A) \longrightarrow \colimit(\A) = \bigcup_{i \in [n]} A_i
\]
sending $({\mathbf s},\x) \mapsto \x$. 
It is a consequence of the Smale-Vietoris  theorem \cite{Smale} that

\begin{lemma}
\label{lem:hocolimit1}
The map 
\[
f: \hocolimit(\A) \longrightarrow \colimit(\A) = \bigcup_{i \in [n]} A_i
\]
is a homotopy equivalence.
\end{lemma}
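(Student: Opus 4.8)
The plan is to apply the Vietoris-type mapping theorem of Smale \cite{Smale}, after checking that $f$ is a surjective, proper map of finite CW-complexes all of whose point-inverses are contractible. First I would record that both $\hocolimit(\A)$ and $\colimit(\A)$ are finite CW-complexes: $\colimit(\A) = \bigcup_{i} A_i$ is a subcomplex of the ambient complex, while $\hocolimit(\A)$ is visibly a finite CW-complex from its construction. The map $f$ is well defined because for $I \subset J$ the map $i_{I,J}\colon A_J \hookrightarrow A_I$ is an inclusion, so it does not alter the $A$-coordinate, and $f$ is surjective since already $f|_{\Delta_{\{i\}} \times A_i}$ covers $A_i$.

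Next I would identify the fibers of $f$. Fix $x \in \colimit(\A)$ and set $I(x) = \{i \in [n] \mid x \in A_i\}$. A point of $f^{-1}(x)$ is represented by a pair $({\mathbf s},x)$ with ${\mathbf s}\in\Delta_I$, and this is admissible exactly for those $I$ with $x \in A_I$, i.e. for $I \subset I(x)$; moreover the relation $\sim$ glues the sets $\Delta_I \times \{x\}$, $I \subset I(x)$, together precisely along their common faces inside $\Delta_{I(x)}$. Hence $f^{-1}(x)$ is homeomorphic to the simplex $\Delta_{I(x)}$, and in particular is contractible and locally contractible.

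I would then invoke Smale's theorem. Since $\hocolimit(\A)$ is compact and $\colimit(\A)$ is Hausdorff, $f$ is a closed (hence proper) surjection; both spaces are finite-dimensional, locally contractible compacta; and every fiber is $n$-connected and locally $n$-connected for all $n$. Applying the theorem on each connected component $C$ of $\colimit(\A)$ — note that $f^{-1}(C)$ is connected because $f$ is a closed surjection with connected fibers — and letting $n \to \infty$, one concludes that $f$ induces isomorphisms $\pi_r(\hocolimit(\A)) \to \pi_r(\colimit(\A))$ for all $r \geq 0$, so that $f$ is a weak homotopy equivalence. As both spaces are CW-complexes, Whitehead's theorem then upgrades $f$ to a genuine homotopy equivalence.

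The step I expect to be the main obstacle is the careful verification of the hypotheses of Smale's mapping theorem — in particular confirming the precise local connectedness conditions on the fibers, and applying the theorem, usually stated for connected spaces, correctly component by component; by contrast the geometric heart of the argument, the identification $f^{-1}(x) \cong \Delta_{I(x)}$, is immediate from the definition of $\hocolimit(\A)$. An alternative route I would keep in reserve is an induction on $n$, exhibiting $\hocolimit(\{A_1,\dots,A_n\})$ as a homotopy pushout of $\hocolimit(\{A_1,\dots,A_{n-1}\})$, of $A_n$, and of $\hocolimit(\{A_1\cap A_n,\dots,A_{n-1}\cap A_n\})$, and then applying the gluing lemma for homotopy equivalences; this avoids the point-set subtleties but requires heavier bookkeeping with the face identifications.
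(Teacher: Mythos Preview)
Your proposal is correct and follows exactly the route the paper indicates: the paper does not give a written proof but simply asserts that the lemma is a consequence of the Smale--Vietoris theorem \cite{Smale}, and your argument spells out precisely that deduction by identifying each fiber $f^{-1}(x)$ with the simplex $\Delta_{I(x)}$ and then invoking Smale's theorem together with Whitehead's theorem.
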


Now let $\A = \{A_1,\ldots,A_n\}$ (resp. ${\mathcal B} = \{B_1,\ldots,B_n\}$) 
be a set of sub-complexes of a finite CW-complex.
For each $I \subset [n]$ let 
$f_I \in \{A_I;B_I\}$ be a stable homotopy
equivalence, having the property that for each $I \subset J \subset [n]$,
$f_J = f_I|_{A_J}$. 
Then, we have an induced S-map,
$f \in \{\hocolimit(\A);\hocolimit({\mathcal B})\}$,  and we have that
\begin{lemma}
\label{lem:hocolimit2}
The induced S-map $f\in \{\hocolimit(\A); \hocolimit({\mathcal B})\}$ 
is a stable homotopy equivalence.
\end{lemma}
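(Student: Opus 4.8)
The plan is to build the S-equivalence $f \in \{\hocolimit(\A);\hocolimit(\mathcal B)\}$ directly from the compatible family $\{f_I\}$ and then verify that it is a stable homotopy equivalence by checking that it induces isomorphisms on homology, invoking Theorem \ref{the:stable}. First I would recall that $\hocolimit(\A)$ carries a filtration by the "simplicial degree": set $F_p \hocolimit(\A) = \bigcupdot_{\# I \leq p+1} \Delta_I \times A_I/\!\sim$. The associated spectral sequence (this is exactly the Mayer--Vietoris, or "Smale--Vietoris", spectral sequence underlying Lemma \ref{lem:hocolimit1}) has $E_1$-term $E_1^{p,q} = \bigoplus_{\# I = p+1} \HH_q(A_I)$, with differentials the alternating sums of the maps induced by the inclusions $i_{I,J}$, and it converges to $\HH_{p+q}(\hocolimit(\A))$. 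The same construction applied to $\mathcal B$ gives the analogous spectral sequence converging to $\HH_*(\hocolimit(\mathcal B))$.

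Next I would observe that the family $\{f_I\}$ induces a morphism of these two spectral sequences. Concretely, each $f_I$ is an S-map, so by the remark following Definition \ref{def:S-equivalence} it induces $(f_I)_* : \HH_q(A_I) \to \HH_q(B_I)$; since each $f_I$ is assumed to be a stable homotopy equivalence, Theorem \ref{the:stable} guarantees $(f_I)_*$ is an isomorphism in every degree. The compatibility hypothesis $f_J = f_I|_{A_J}$ for $I \subset J$ is precisely what makes the collection $\{(f_I)_*\}$ commute with the $i_{I,J}$-maps, hence with the $E_1$-differentials; so we get a map of $E_1$-pages which is an isomorphism term by term. A standard comparison argument for first-quadrant (here, bounded) spectral sequences then shows the induced map on the abutments $\HH_*(\hocolimit(\A)) \to \HH_*(\hocolimit(\mathcal B))$ is an isomorphism.

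It remains to produce an actual S-map $f$ realizing these homology isomorphisms, rather than merely a compatible family on pieces. Here I would use the cofibration/pushout structure of the homotopy colimit: $\hocolimit(\A)$ is assembled from the $\Delta_I \times A_I$ by attaching along the subcomplexes $\partial \Delta_I \times A_I \cup \Delta_I \times (\text{stuff})$, and after suspending sufficiently many times one can assemble the suspended maps $\Suspension^N f_I$ into a single map $\Suspension^N(\hocolimit \A) \to \Suspension^N(\hocolimit \mathcal B)$ by induction on the skeleta of $\Delta_{[n]}$, using at each stage that the relevant maps on the lower-dimensional strata have already been chosen compatibly and that suspension converts the homotopies witnessing $f_J = f_I|_{A_J}$ into honest equalities in the suspension category. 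This gives the desired $f \in \{\hocolimit(\A); \hocolimit(\mathcal B)\}$, and by the previous paragraph $f_*$ is an isomorphism on all homology groups, so Theorem \ref{the:stable} finishes the proof.

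The main obstacle I anticipate is the last step: turning the "local" S-equivalences $f_I$ into a single globally defined S-map on the homotopy colimit, since a priori the $f_I$ live in colimits over different numbers of suspensions and one must choose a uniform $N$ and then glue coherently over the poset of faces of $\Delta_{[n]}$. The finiteness of the indexing poset $[n]$ (so only finitely many $f_I$, hence a uniform $N$ exists) and the fact that $\hocolimit$ is built by iterated cofibrations are what make this feasible, but the bookkeeping of the coherences is the delicate part; the homology comparison via the spectral sequence, by contrast, is routine once the map-of-spectral-sequences is in place.
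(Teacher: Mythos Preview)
Your approach is essentially the same as the paper's: show that $f$ induces isomorphisms on homology via a Mayer--Vietoris argument, then invoke Theorem~\ref{the:stable}. The paper's entire proof is one sentence (``Using the Mayer-Vietoris exact sequence it is easy to see that if the $f_I$'s induce isomorphisms in homology, so does the map $f$. Now apply Theorem~\ref{the:stable}.''); your spectral-sequence comparison is just this argument made precise for an $n$-fold cover. The one substantive difference is that the paper simply asserts the existence of the induced S-map $f$ in the sentence preceding the lemma and never addresses its construction, whereas your final two paragraphs sketch how to assemble $f$ from the $f_I$ by choosing a uniform suspension and gluing over the face poset of $\Delta_{[n]}$. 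So your proof is more complete on exactly the point you flagged as the main obstacle, but strategically identical.
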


\begin{proof}
Using the Mayer-Vietoris exact sequence it is easy to see that if the $f_I$'s
induce isomorphisms in homology, so does the map $f$. Now apply Theorem
\ref{the:stable}.
\end{proof}
\section{Proof of Theorem \ref{the:main}}

\subsection{Proof Strategy}
%
The strategy underlying  our proof of Theorem \ref{the:main} is as follows.
We first consider the special case of a
semi-algebraic subset, $A \subset \Sphere^{\ell}$, 
defined by a disjunction of $m$ homogeneous quadratic inequalities restricted
to the unit sphere in $\R^{\ell+1}$. 
We then show that there exists a closed and bounded
semi-algebraic set~$C'$ (see (\ref{eqn:defofC'})
below for the precise definition of the semi-algebraic set~$C'$),
consisting of certain sphere bundles, glued along 
certain sub-sphere bundles,
which is homotopy equivalent to $A$. 
The number of these sphere bundles,
as well descriptions of their bases, 
are bounded polynomially in $\ell$ (for fixed $m$).  

In the presence of parameters $X_1,\ldots,X_k$, 
the set~$A$, as well as $C'$, will depend on the values of the
parameters. However, 
using some basic homotopy properties of bundles, we show that
the homotopy type of the set~$C'$ stays invariant,
under continuous deformation  of the bases  
of the different sphere bundles which constitute $C'$.
These bases also depend on the parameters, $X_1,\ldots,X_k$,
but the
degrees of the polynomials defining them have degrees bounded
by $O(\ell d)$ in $X_1,\ldots,X_k$.
Now, using techniques similar to those used in \cite{BV06}, 
we are able to control the number of 
isotopy types of the bases which  occur, as the parameters
vary over $\R^k$. The bound on the number of isotopy types, also gives a 
bound on the number of possible homotopy types of the set~$C'$ 
and hence of $A$,  for different values of the parameter. 

In order to prove the results for semi-algebraic sets defined by 
more general formulas than disjunctions of weak inequalities, we first
use Spanier-Whitehead duality to obtain a bound in the case of conjunctions,
and then use the construction of homotopy colimits to prove the theorem
for general ${\mathcal P}$-closed sets. Because of the use of Spanier-Whitehead
duality we get bounds on the number of stable homotopy types, rather than
homotopy types.

%
\subsection{Topology of sets defined by quadratic constraints}
%
One of the main ideas  behind our proof of Theorem \ref{the:main} is
to parametrize a construction introduced by 
Agrachev in \cite{Agrachev} while studying the topology of sets defined by 
(purely) quadratic inequalities (that is without the parameters 
$X_1,\ldots,X_k$ in  our notation). 
However, we avoid construction of Leray spectral sequences as done in 
\cite{Agrachev}. For the rest of this section, we fix 
a set of polynomials
\[
{\mathcal Q} = \{Q_1,\ldots,Q_{m}\} \subset \R[Y_0,\ldots,Y_\ell,X_1,\ldots,X_k]
\] 
which are homogeneous of degree  $2$ in $Y_0,\ldots,Y_\ell$, 
and of degree at most $d$ in $X_1,\ldots,X_k$.

We will denote by 
\[
Q = (Q_1,\ldots,Q_m): \R^{\ell+1} \times \R^k \rightarrow \R^m,
\]
the map defined by the polynomials $Q_1,\ldots,Q_m$, and
generally, for $I \subset \{1,\ldots, m\}$, we denote by 
$Q_I: \R^{\ell+1} \times \R^k \rightarrow \R^I$, the map whose
co-ordinates are given by $Q_i$, $i \in I$. 
We will often drop  the subscript~$I$ from our notation, 
when $I = [m]$.

For any subset~$I \subset [m]$, 
let
$A_I \subset \Sphere^{\ell} \times \R^k$ be the semi-algebraic set defined by
\begin{equation}
\label{eqn:defofA_I}
A_I = \bigcup_{i\in I}
\{ (\y,\x) \;\mid\; |\y|=1\; \wedge\; Q_i(\y,\x) \leq 0\},
\end{equation}
and let
\begin{equation}
\label{eqn:defofOmega_I}
\Omega_I = \{\omega \in \R^{m} \mid  |\omega| = 1, 
\omega_i = 0, i \not\in I, 
\omega_i \leq 0, i \in I\}.
\end{equation}

For $\omega \in \Omega_I$ we denote by 
${\omega}Q \in \R[Y_0,\ldots,Y_\ell,X_1,\ldots,X_k]$ the polynomial
defined by 
\begin{equation}
\label{eqn:defofomegaQ}
{\omega} Q = \sum_{i=0}^{m} \omega_i Q_i.
\end{equation}

For $(\omega,\x) \in F_I = \Omega_I \times \R^k$, we will denote by
$\omega Q(\cdot,\x)$ the quadratic form in $Y_0,\ldots,Y_\ell$ 
obtained from $\omega Q$ by specializing $X_i = \x_i, 1 \leq i \leq k$.

Let $B_I \subset \Omega_I \times \Sphere^{\ell} \times \R^k$ 
be the semi-algebraic set defined by

\begin{equation}
\label{eqn:defofB_I}
B_I = \{ (\omega,\y,\x)\mid \omega \in 
\Omega_I, 
\y\in \Sphere^{\ell}, 
\x \in\R^k,  \; {\omega}Q(\y,\x) \geq 0\}.
\end{equation}

We denote by $\phi_1: B_I \rightarrow F_I$ and 
$\phi_2: B_I \rightarrow \Sphere^{\ell} \times\R^k$ the two projection maps
(see diagram below).
\begin{equation}
\label{eqn:maindiagram}
\begin{diagram}
\node{}
\node{B_I} \arrow{sw,t}{\phi_{I,1}}\arrow{s,..}\arrow{se,t}{\phi_{I,2}} \\
\node{F_I = \Omega_I \times\R^k} \arrow{e} \node{\R^k} \node{\Sphere^{\ell} \times\R^k} \arrow{w} \\
\end{diagram}
\end{equation}
The following key proposition was proved by Agrachev \cite{Agrachev}
in the unparametrized
situation, but as we see below it works in the parametrized case as well.

\begin{proposition}
\label{prop:homotopy2}
The map $\phi_2$ gives a homotopy equivalence between $B_I$ and 
$\phi_2(B_I) = A_I$.
\end{proposition}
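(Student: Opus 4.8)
The plan is to show that $\phi_2\colon B_I \to A_I$ is a fibration-like map with contractible fibers, and then invoke a standard homotopy-equivalence criterion. First I would verify that $\phi_2(B_I) = A_I$: for a point $(\y,\x)$ with $|\y| = 1$, there exists $\omega \in \Omega_I$ with $\omega Q(\y,\x) \geq 0$ if and only if $\min_{i \in I} Q_i(\y,\x) \leq 0$, because $\omega Q(\y,\x) = \sum_{i \in I}\omega_i Q_i(\y,\x)$ is a nonpositive combination of the $Q_i(\y,\x)$, $i \in I$ (recall $\omega_i \leq 0$ on $\Omega_I$ and $\sum \omega_i^2 = 1$); the supremum of such combinations over $\Omega_I$ equals $-\min_{i\in I} Q_i(\y,\x)$ (attained by concentrating weight on the index achieving the minimum), so it is $\geq 0$ precisely when some $Q_i(\y,\x) \leq 0$. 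That is exactly the defining condition of $A_I$ in \eqref{eqn:defofA_I}.

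Next, the key point is to analyze the fiber $\phi_2^{-1}(\y,\x)$ for $(\y,\x) \in A_I$. This fiber is $\{\omega \in \Omega_I \mid \sum_{i \in I}\omega_i Q_i(\y,\x) \geq 0\}$, i.e. the intersection of the set $\Omega_I$ (which is itself a portion of a sphere cut out by sign conditions $\omega_i = 0$ for $i \notin I$ and $\omega_i \leq 0$ for $i \in I$, hence homeomorphic to a closed simplex-like spherical region, in particular contractible) with a closed linear half-space through the origin. The intersection of $\Omega_I$ with a half-space $\{\omega \cdot q \geq 0\}$ where $q = (Q_i(\y,\x))_{i \in I}$ is a nonempty (since $(\y,\x) \in A_I$), closed, and convex-in-the-spherical-sense subset; concretely $\Omega_I$ is the image under radial projection of an orthant-like convex cone, and intersecting with a linear half-space keeps it a convex cone, whose spherical projection is contractible (it deformation retracts onto any of its points, or one can contract along great-circle geodesics toward a fixed point of the region). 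So every nonempty fiber is contractible.

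Then I would conclude using the semi-algebraic version of the Vietoris–Begle / Smale theorem, or more simply the fact that a semi-algebraic surjection whose fibers are all contractible, together with the local triviality furnished by Hardt's theorem (Theorem \ref{the:hardt}) applied to $\phi_2$, is a homotopy equivalence onto its image: Hardt's theorem gives a semi-algebraic partition of $A_I$ over which $\phi_2$ is trivial, so $\phi_2$ is locally a projection $\text{(contractible)} \times U \to U$; gluing these local homotopy equivalences via a Mayer–Vietoris argument (as in the proof of Lemma \ref{lem:hocolimit2}) shows $\phi_2$ induces isomorphisms on all homotopy groups, and since both spaces are semi-algebraic (hence have the homotopy type of CW-complexes) Whitehead's theorem finishes it. Alternatively, one notes $B_I$ deformation retracts onto $A_I$ fiberwise by a semi-algebraic contraction of the fibers chosen continuously in $(\y,\x)$ — e.g. retract each fiber onto its unique point closest to a suitably chosen continuous section — which directly exhibits the homotopy equivalence.

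The main obstacle I expect is making the fiberwise contraction genuinely continuous (and semi-algebraic) in the base point $(\y,\x)$, since the "shape" of the fiber $\phi_2^{-1}(\y,\x) = \Omega_I \cap \{\omega \cdot q(\y,\x) \geq 0\}$ degenerates as $q(\y,\x)$ varies — in particular the fiber can collapse from full-dimensional to lower-dimensional exactly on the locus where $(\y,\x)$ moves to the boundary $\partial A_I$. Handling this cleanly is precisely why invoking Hardt's triviality theorem plus a Mayer–Vietoris/Whitehead argument (rather than writing down an explicit global retraction) is the safer route; this matches Agrachev's original strategy in \cite{Agrachev}, and the only thing to check is that every step is transfer-principle compatible so that it holds over the arbitrary real closed field $\R$ (cf. Remark \ref{rem:transfer}).
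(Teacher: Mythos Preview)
Your proposal is correct and follows essentially the same route as the paper: verify $\phi_2(B_I)=A_I$, show each fiber is the intersection of a convex cone with a sphere and hence contractible, then invoke the Smale--Vietoris theorem \cite{Smale}. The paper stops there without discussing Hardt triviality, Whitehead, or explicit fiberwise retractions, so your additional alternatives and the worry about continuous dependence of the contraction are unnecessary---the Smale--Vietoris theorem already handles this.
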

\begin{proof}
In order to simplify notation we prove it in the case $I = [m]$,
and the case for any other $I$ would follow immediately. 
We first prove that $\phi_2(B) = A.$
If $(\y,\x) \in A,$ 
then there exists some $i, 1 \leq i \leq m,$ such that
$Q_i(\y,\x) \leq 0$. 
Then for $\omega = (-\delta_{1,i},\ldots,-\delta_{m,i})$
(where $\delta_{i,j} = 1$ if $i=j$, and $0$ otherwise),
we see that $(\omega,\y,\x) \in B$.
Conversely,
if $(\y,\x) \in \phi_2(B),$ then there exists 
$\omega = (\omega_1,\ldots,\omega_m) \in \Omega$ such that,
\[
\sum_{i=1}^m \omega_i Q_i(\y,\x) \geq 0.
\] 
Since, $\omega_i \leq 0, 1\leq i \leq m,$ and not all $\omega_i = 0$.
This implies that $Q_i(\y,\x) \leq 0$ for
some $i, 1 \leq i \leq m$. This shows that $(\y,\x) \in A$.

For $(\y,\x) \in \phi_2(B)$, the fiber 
$$
\phi_2^{-1}(\y,\x) = \{ (\omega,\y,\x) \mid  
 \omega \in \Omega \;\mbox{such that} \;  {\omega}Q(\y,\x) \geq 0\},
$$
is a non-empty subset of $\Omega$ defined by a single linear inequality.
Thus, each non-empty fiber is an intersection of a convex cone with
$\Sphere^{m-1}$, and hence contractible.

The proposition now follows from 
the well-known Smale-Vietoris theorem \cite{Smale}. 
\end{proof}
We will use the following notation.

\begin{notation}
For any  quadratic form $Q \in \R[Y_0,\ldots,Y_\ell]$, 
we will denote by ${\rm index}(Q)$, the number of
negative eigenvalues of the symmetric matrix of the corresponding bilinear
form, that is of the matrix $M_Q$ such that,
$Q(\y) = \langle M_Q \y, \y \rangle$ for all $\y \in \R^{\ell+1}$ 
(here $\langle\cdot,\cdot\rangle$ denotes the usual inner product). 
We will also
denote by $\lambda_i(Q), 0 \leq i \leq \ell$, the eigenvalues of $Q$, in non-decreasing order, i.e.
\[ 
\lambda_0(Q) \leq \lambda_1(Q) \leq \cdots \leq \lambda_\ell(Q).
\]
\end{notation}
For $I \subset [m]$,  we denote by

\begin{equation}
\label{eqn:defofF_Ij}
F_{I,j} = \{(\omega,\x) \in \Omega_I \times \R^k \;  
\mid \;  {\rm index}({\omega}Q(\cdot,\x)) \leq j \}.
\end{equation}

It is clear that each 
$F_{I,j}$ is a closed semi-algebraic subset of 
$F_I$ and that they induce a filtration of the space
$F_I$, given by
\[
F_{I,0} \subset F_{I,1} \subset \cdots \subset F_{I,\ell+1} =F_I.
\]
\begin{lemma}
\label{lem:sphere}
The fiber of the map $\phi_{I,1}$ over a point 
$(\omega,\x)\in F_{I,j}\setminus F_{I,j-1}$ 
has the homotopy type of a sphere of dimension $\ell-j$. 
\end{lemma}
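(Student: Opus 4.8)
The plan is to fix a point $(\omega,\x)\in F_{I,j}\setminus F_{I,j-1}$, so that the quadratic form $q = \omega Q(\cdot,\x)$ on $\R^{\ell+1}$ has exactly $\mathrm{index}(q)=j$, and to compute directly the fiber
\[
\phi_{I,1}^{-1}(\omega,\x) \;=\; \{\y\in\Sphere^{\ell}\mid q(\y)\geq 0\}.
\]
First I would diagonalize the symmetric matrix $M_q$ by an orthogonal change of coordinates on $\R^{\ell+1}$; this is an isometry of $\Sphere^{\ell}$ and so does not change the homotopy type of the fiber. In the new coordinates $q(\y)=\sum_{i=0}^{\ell}\lambda_i y_i^2$ with the eigenvalues ordered $\lambda_0\leq\cdots\leq\lambda_\ell$, and by hypothesis exactly $j$ of them are strictly negative, say $\lambda_0,\ldots,\lambda_{j-1}<0$, while $\lambda_j,\ldots,\lambda_\ell\geq 0$. (Note $j\le \ell$ since the form is not definite — if $j=\ell+1$ the fiber would be empty, but $F_{I,\ell+1}=F_I$ and the stratum $F_{I,\ell+1}\setminus F_{I,\ell}$ consists of negative definite forms, which I should briefly check cannot contribute here, or simply observe the statement is vacuous/degenerate in that edge case and treat $0\le j\le\ell$; a sphere of dimension $\ell-j$ with $j=\ell+1$ would be $\Sphere^{-1}=\emptyset$ anyway, so the statement is even literally consistent).

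Next I would exhibit an explicit deformation retraction of the set $\{\y\in\Sphere^\ell\mid \sum\lambda_i y_i^2\ge 0\}$ onto the subsphere $\Sphere^{\ell-j}=\{\y\in\Sphere^\ell\mid y_0=\cdots=y_{j-1}=0\}$. The idea is to push the "negative" coordinates $(y_0,\ldots,y_{j-1})$ toward $0$ while renormalizing to stay on $\Sphere^\ell$: for $t\in[0,1]$ set
\[
H_t(\y) \;=\; \frac{\big((1-t)y_0,\ldots,(1-t)y_{j-1},\,y_j,\ldots,y_\ell\big)}{\big\|\big((1-t)y_0,\ldots,(1-t)y_{j-1},\,y_j,\ldots,y_\ell\big)\big\|}.
\]
The denominator never vanishes on the fiber: if $y_j=\cdots=y_\ell=0$ then $q(\y)=\sum_{i<j}\lambda_i y_i^2<0$ unless $\y=0$, contradicting $q(\y)\ge 0$ and $\y\in\Sphere^\ell$; so every point of the fiber has $(y_j,\ldots,y_\ell)\neq 0$, and this property is preserved under $H_t$. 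I then need to check that $H_t$ maps the fiber into itself, i.e. that shrinking the first $j$ coordinates keeps $q\ge 0$: indeed $q(H_t(\y))$ has the same sign as $(1-t)^2\sum_{i<j}\lambda_i y_i^2 + \sum_{i\ge j}\lambda_i y_i^2$, and since the first sum is $\le 0$ and multiplied by $(1-t)^2\le 1$, this quantity is $\ge \sum_{i}\lambda_i y_i^2 = q(\y)\ge 0$. Thus $H$ is a strong deformation retraction of $\phi_{I,1}^{-1}(\omega,\x)$ onto $\Sphere^{\ell-j}$, which proves the lemma.

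The main subtlety to handle carefully, rather than a deep obstacle, is the borderline behavior of the zero eigenvalues $\lambda_j,\ldots$: the homotopy above does not touch those coordinates, so it is unaffected, but I want to make sure the case $j=\ell$ (one-dimensional positive semidefinite part) and the degenerate edge $j=\ell+1$ are addressed, and that all of this transfers to an arbitrary real closed field $\R$ — the diagonalization of a symmetric matrix and the explicit semi-algebraic homotopy $H_t$ are defined over $\R$ by formulas, so the argument is valid over any real closed field and no appeal to the Tarski–Seidenberg transfer principle beyond the semi-algebraicity of $H$ is needed. One final bookkeeping point: I should note that although the ordering of eigenvalues and the orthogonal transformation vary with $(\omega,\x)$, the lemma is a pointwise statement about a single fiber, so no continuity-in-parameters argument is required at this stage.
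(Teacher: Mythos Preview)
Your proposal is correct and follows the same line as the paper: diagonalize the quadratic form $\omega Q(\cdot,\x)$ in an orthonormal eigenbasis, so that the fiber becomes $\{\y\in\Sphere^\ell\mid \sum_i\lambda_i y_i^2\ge 0\}$ with $\lambda_0,\ldots,\lambda_{j-1}<0\le\lambda_j,\ldots,\lambda_\ell$, and retract onto the subsphere $\{y_0=\cdots=y_{j-1}=0\}$. The paper's proof of this lemma is in fact terser than yours---it identifies the fiber and the target sphere in these coordinates and simply asserts the homotopy equivalence, deferring the explicit retraction formula (essentially your $H_t$) to the later proof of Proposition~\ref{prop:homotopy1}; your version supplies that verification here, including the check that the normalizing denominator never vanishes on the fiber and that $q\ge 0$ is preserved along the homotopy.
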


\begin{proof}
As before,  we prove the lemma
only for $I = [m]$. The proof for a general $I$ is identical.  
First notice that for
$(\omega,\x) \in  
F_{j}\setminus F_{j-1}$,
the first $j$ eigenvalues of $\omega Q(\cdot,\x)$,
\[
\lambda_0({\omega}Q(\cdot,\x)),\ldots, \lambda_{j-1}({\omega}Q(\cdot,\x)) < 0.
\] 
Moreover, letting 
$W_0({\omega}Q(\cdot,\x)),\ldots,W_{\ell}({\omega}Q(\cdot,\x))$ 
be the co-ordinates with respect to an orthonormal basis,
$e_0({\omega}Q(\cdot,\x)),\ldots,e_{\ell}({\omega}Q(\cdot,\x))$ ,
consisting of  eigenvectors of ${\omega}Q(\cdot,\x)$, we have that 
$\phi_1^{-1}(\omega,\x)$ is the subset of 
$\Sphere^{\ell} = \{\omega\} \times \Sphere^{\ell} \times \{\x\}$ 
defined by
$$
\displaylines{
\sum_{i=0}^{\ell} \lambda_i({\omega}Q(\cdot,\x))W_i({\omega}Q(\cdot,\x))^2 \geq  0, \cr
\sum_{i=0}^{\ell} W_i({\omega}Q(\cdot,\x))^2 = 1.
}
$$
Since, $\lambda_i({\omega}Q(\cdot,\x)) < 0, 0 \leq i < j,$ it follows that
for $(\omega,\x) \in F_{j}\setminus F_{j-1}$,
the fiber $\phi_1^{-1}(\omega,\x)$ is homotopy equivalent to the
$(\ell-j)$-dimensional sphere defined by setting
\[
W_0({\omega}Q(\cdot,\x)) = \cdots = W_{j-1}({\omega}Q(\cdot,\x)) = 0
\]
on the sphere defined by
$\sum_{i=0}^{\ell}W_i({\omega}Q(\cdot,\x))^2 = 1$.
\end{proof}
For each 
$(\omega,\x) \in F_{I,j} \setminus F_{I,j-1}$, let 
$L_j^+(\omega,\x) \subset \R^{\ell+1}$ denote the sum of the 
non-negative eigenspaces of 
$\omega Q(\cdot,\x)$ (i.e. $L_j^+(\omega,\x)$ is the largest linear
subspace  of $\R^{\ell+1}$ on which $\omega Q(\cdot,\x)$ is positive 
semi-definite). 
Since  ${\rm index}(\omega Q(\cdot,\x)) = j$ stays invariant as
$(\omega,\x)$ varies over $F_{I,j}\setminus F_{I,j-1}$,
$L_j^+(\omega,\x)$ varies continuously with $(\omega,\x)$.

We will denote by $C_I$ the semi-algebraic set defined by

\begin{equation}
\label{eqn:definition_of_C}
C_I = \bigcup_{j=0}^{\ell+1} \{(\omega,\y,\x) \;\mid\; (\omega,\x) \in 
      F_{I,j}\setminus F_{I,j-1}, 
\y \in L_j^+(\omega,\x), |\y| = 1\}.
\end{equation}

The following proposition 
relates the homotopy type of $B_I$ to that
of $C_I$. 
\begin{proposition}
\label{prop:homotopy1}
The semi-algebraic set~$C_I$ defined above is homotopy equivalent to $B_I$ 
(see (\ref{eqn:defofB_I}) for the definition of $B_I$).
\end{proposition}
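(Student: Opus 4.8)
\textbf{Proof proposal for Proposition \ref{prop:homotopy1}.}

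The plan is to exhibit $C_I$ as a deformation retract of $B_I$, fiberwise over $F_I$ along the projection $\phi_{I,1}$. Recall that for $(\omega,\x)\in F_{I,j}\setminus F_{I,j-1}$, Lemma \ref{lem:sphere} identifies the fiber $\phi_{I,1}^{-1}(\omega,\x)$ with the subset of the sphere $\Sphere^\ell$ cut out by $\sum_i \lambda_i(\omega Q(\cdot,\x))\,W_i^2 \geq 0$, and the proof of that lemma shows this set retracts onto the sub-sphere $\{W_0 = \cdots = W_{j-1} = 0\}$, which is precisely the unit sphere in the non-negative eigenspace $L_j^+(\omega,\x)$. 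So the fiber of $C_I \to F_I$ over $(\omega,\x)$ is exactly the target of this retraction. First I would make the fiberwise retraction explicit: write $\y\in\Sphere^\ell$ in eigencoordinates as $\y = \y^- + \y^+$, where $\y^-$ is the projection onto the span of the negative eigenvectors $e_0,\ldots,e_{j-1}$ and $\y^+$ onto $L_j^+(\omega,\x)$, and define the straight-line homotopy $H_t(\omega,\y,\x) = \big((1-t)\y^- + \y^+\big)\big/\big|(1-t)\y^- + \y^+\big|$. For a point $(\omega,\y,\x)\in B_I$ one has $\omega Q(\y,\x)\geq 0$, which forces $\y^+ \neq 0$ (since $\omega Q$ is strictly negative on the span of $e_0,\ldots,e_{j-1}$), so the denominator never vanishes and $H_t$ is well-defined; moreover $\omega Q(H_t(\omega,\y,\x),\x) \geq 0$ throughout, so $H_t$ stays inside $B_I$, it fixes $C_I$ pointwise, and $H_1$ lands in $C_I$.

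The one genuine subtlety is continuity of $H$ \emph{across the strata}: the decomposition $\y = \y^- + \y^+$ and the subspace $L_j^+(\omega,\x)$ depend on $(\omega,\x)$ through the eigenspace decomposition of $\omega Q(\cdot,\x)$, and the index $j$ jumps as $(\omega,\x)$ crosses from one stratum $F_{I,j}\setminus F_{I,j-1}$ to the boundary of another. As already noted in the text just before (\ref{eqn:definition_of_C}), $L_j^+(\omega,\x)$ varies continuously \emph{within} a stratum because the index is locally constant there; the issue is a sequence $(\omega_n,\x_n)$ in a lower stratum (larger $j$, i.e. more negative eigenvalues) converging to a point in a higher stratum (some negative eigenvalues going to $0$). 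In the limit, eigenvectors whose eigenvalues tend to $0$ get absorbed into $L_j^+$, so $L_j^+(\omega_n,\x_n)$ converges (in the Grassmannian) to a subspace of the limiting $L_j^+$, and correspondingly the retraction ``does less and less'' in those directions as the corresponding $\lambda_i \to 0^-$. I would make this precise either by the eigenvalue-weighted variant $H_t(\omega,\y,\x) = \big(\y + (t\cdot c(\omega,\x) - 1)\,\y^-\big)/|\cdots|$ with a continuous rescaling, or more cleanly by invoking a semi-algebraic version of Hardt triviality / the local conical structure to first show $B_I$ and $C_I$ are each fiberwise trivial over the strata and then glue; but the most transparent route is to observe that $H$ as defined above is already continuous on all of $B_I$ because in eigencoordinates the coefficients $\lambda_i(\omega Q(\cdot,\x))$ and the orthogonal projections onto eigenspaces grouped by sign $\{\lambda < 0\}$ vs $\{\lambda \geq 0\}$ are continuous semi-algebraic functions of $(\omega,\x)$ wherever no eigenvalue crosses $0$ transversally in a way that changes the \emph{non-negative} eigenspace discontinuously — and on $B_I$ the constraint $\omega Q(\y,\x)\geq 0$ confines us away from the bad locus for the points that matter.

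Granting continuity, $H$ is a semi-algebraic strong deformation retraction of $B_I$ onto $C_I$, hence $C_I$ and $B_I$ are semi-algebraically homotopy equivalent, which is the assertion. I expect the main obstacle to be exactly the cross-stratum continuity argument sketched above; everything else (well-definedness of the normalization, invariance of $B_I$ under the homotopy, the fact that $C_I$ is fixed) is a routine check using the eigenvalue inequalities from the proof of Lemma \ref{lem:sphere}. If the direct continuity argument proves awkward to write cleanly over a general real closed field, I would fall back on Remark \ref{rem:transfer} to reduce to $\R = \re$ and then use the compactness of $\Omega_I\times\Sphere^\ell$ (after restricting $\x$ to a bounded region via another retraction, or arguing locally in $\x$) together with the semi-algebraic triviality theorem to assemble the homotopy equivalence stratum by stratum.
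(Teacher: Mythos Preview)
Your single-step retraction $H_t(\omega,\y,\x)=\bigl((1-t)\y^-+\y^+\bigr)/|\cdot|$ is not continuous across strata, and the constraint $\omega Q(\y,\x)\geq 0$ does not rescue it. Concretely: let $(\omega_n,\x_n)\in F_{I,j}\setminus F_{I,j-1}$ converge to $(\omega_0,\x_0)\in F_{I,j-1}\setminus F_{I,j-2}$, so that $\lambda_{j-1}(\omega_n Q(\cdot,\x_n))\to 0^-$, and suppose $\lambda_\ell>0$ throughout. Choose $\y_n = \cos\theta_n\, e_{j-1} + \sin\theta_n\, e_\ell$ with $\theta_n\to 0$ just slowly enough that $\lambda_{j-1}\cos^2\theta_n+\lambda_\ell\sin^2\theta_n\geq 0$; then $(\omega_n,\y_n,\x_n)\in B_I$ and $\y_n\to \y_0 := e_{j-1}$, which lies in $B_I$ since $\omega_0 Q(e_{j-1},\x_0)=0$. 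Along the sequence $\y_n^- = \cos\theta_n\,e_{j-1}$ and $\y_n^+ = \sin\theta_n\,e_\ell$, so $H_1(\omega_n,\y_n,\x_n)= e_\ell$. At the limit, however, $\lambda_{j-1}=0$ places $e_{j-1}$ in $L_{j-1}^+(\omega_0,\x_0)$, so $\y_0^+=\y_0=e_{j-1}$ and $H_1(\omega_0,\y_0,\x_0)=e_{j-1}\neq e_\ell$. The splitting $\y=\y^-+\y^+$ genuinely jumps on $B_I$ when an eigenvalue crosses zero; your sentence ``the constraint \ldots\ confines us away from the bad locus for the points that matter'' is exactly what fails here.

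The paper's proof uses a device you list only as a fallback: rather than retracting all fibers at once, it performs a \emph{sequence} of deformation retractions $B_I\to B_{I,\ell}\to B_{I,\ell-1}\to\cdots\to B_{I,0}=C_I$, processing one stratum $F_{I,j}\setminus F_{I,j-1}$ at a time and starting from the \emph{largest} index $j=\ell$ (where the target sphere is $0$-dimensional), then working downward. At stage $j$ the retraction collapses only the coordinates $W_0,\ldots,W_{j-1}$ and renormalizes $W_j,\ldots,W_\ell$; fibers over strata with index $j'>j$, already reduced at earlier stages to spheres in $L_{j'}^+$ (hence with $W_0=\cdots=W_{j'-1}=0$, in particular $W_0=\cdots=W_{j-1}=0$), are automatically fixed by the current retraction. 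The order---deepest stratum first---is what makes each successive step compatible with the already-processed pieces. Your one-shot $H_t$ is morally the composite of all these stages performed simultaneously, and that simultaneity is exactly where the cross-stratum discontinuity enters.
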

\begin{proof}
We give a deformation retraction of $B_I$ to $C_I$ constructed as follows.
For each $(\omega,x) \in F_{I,\ell} \setminus 
F_{I,\ell-1}$, we can retract the fiber 
$\phi_1^{-1}(\omega,x)$ to the zero-dimensional sphere,
$L_{\ell}^+(\omega,x) \cap \Sphere^{\ell}$
by the following retraction. Let 
\[
W_0({\omega}Q_I(\cdot,x)),\ldots,W_{\ell}({\omega}Q_I(\cdot,x))
\] 
be the co-ordinates with respect to an orthonormal basis 
$e_0({\omega}Q(\cdot,\x)),\ldots,e_{\ell}({\omega}Q(\cdot,\x))$,
consisting of
eigenvectors of ${\omega}Q_I(\cdot,x)$ corresponding to 
non-decreasing
order of the eigenvalues of ${\omega}Q(\cdot,\x)$. Then,
$\phi_1^{-1}(\omega,x)$ is the subset of 
$\Sphere^{\ell}$  defined by
$$
\displaylines{
\sum_{i=0}^{\ell} \lambda_i({\omega}Q_I(\cdot,x))W_i({\omega}Q_I(\cdot,x))^2 \geq  0, \cr
\sum_{i=0}^{\ell} W_i({\omega}Q_I(\cdot,x))^2 = 1.
}
$$
and $L_{\ell}^+(\omega,x)$ is defined by $W_0({\omega}Q_I(\cdot,x)) = \cdots = 
W_{\ell-1}({\omega}Q_I(\cdot,x)) = 0$.
We retract 
$\phi_1^{-1}(\omega,x)$ to the zero-dimensional sphere,
$L_{\ell}^+(\omega,x) \cap \Sphere^{\ell}$
by the retraction sending,
$(w_0,\ldots,w_\ell) \in \phi_1^{-1}(\omega,x)$, at time $t$ to
$((1-t)w_0,\ldots,(1-t)w_{\ell-1},t'w_\ell)$, where $0 \leq t \leq 1$,
and 
$
\displaystyle{
t' = \left(\frac{1 - (1-t)^2 \sum_{i=0}^{\ell-1}w_i^2}{w_\ell^2}\right)^{1/2}.
}
$
Notice that even though the local co-ordinates 
$(W_0,\ldots,W_\ell)$ in $\R^{\ell+1}$ 
with respect to the orthonormal basis
$(e_0,\ldots,e_\ell)$ may not be uniquely defined at the point $(\omega,x)$ 
(for instance,
if the quadratic form ${\omega}Q_I(\cdot,x)$ has multiple eigen-values),
the retraction is still well-defined since it only depends on the
decomposition of $R^{\ell+1}$ into 
orthogonal complements $\spanof(e_0,\ldots,e_{\ell-1})$ and $\spanof(e_\ell)$ which is well defined. 
We can thus retract simultaneously all fibers
over $F_{I\ell} \setminus F_{I,\ell-1}$
continuously, to obtain a semi-algebraic set~$B_{I,\ell} \subset B_I$,
which is moreover homotopy equivalent to $B_I$.

This retraction is schematically shown in Figure \ref{fig:figure2}, where 
$F_{I,\ell}$ is the closed segment, and
$F_{I,\ell-1}$ are its end points.

\begin{figure}[hbt]
         \centerline{
           \scalebox{0.5}{
             \input{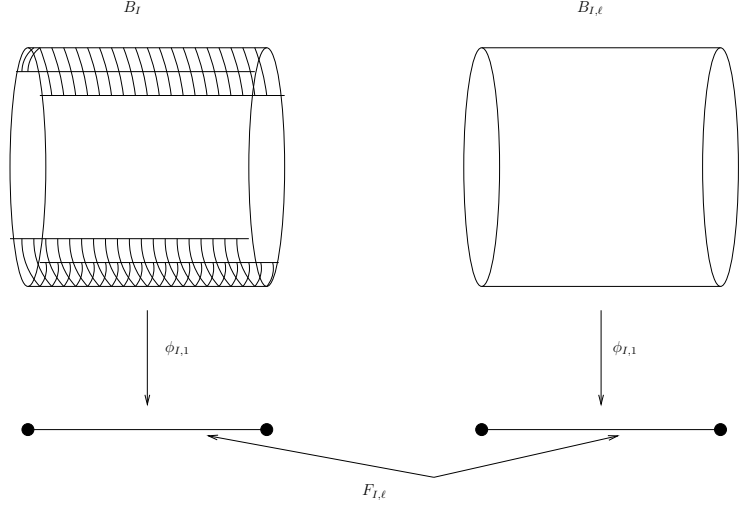}
             }
           }
         \caption{Schematic picture of the retraction of $B_I$ to $B_{I,\ell}$.}
         \label{fig:figure2}
       \end{figure}

Now starting from $B_{I,\ell}$, retract all fibers over
$F_{I,\ell-1} \setminus F_{I,\ell-2}$ to
the corresponding one dimensional spheres,
by the retraction sending,
$(w_0,\ldots,w_\ell) \in \phi_1^{-1}(\omega,x)$, at time $t$ to
$((1-t)w_0,\ldots,(1-t)w_{\ell-2},t'w_{\ell-1}, t'w_\ell)$, where $0 \leq t \leq 1$,
and 
$
\displaystyle{
t' = \left(\frac{1 - (1-t)^2 \sum_{i=0}^{\ell-2}w_i^2}{\sum_{i=\ell-1}^{\ell}
w_i^2}\right)^{1/2}
}
$
to obtain $B_{I,\ell-1}$,
which is homotopy equivalent to $B_{I,\ell}$. Continuing this 
process we finally
obtain $B_{I,0} = C_I$, which is clearly homotopy equivalent to 
$B_I$ by construction.
\end{proof}

Notice that the semi-algebraic set 
$\phi_1^{-1}(F_{I,j} \setminus F_{I,j-1})\cap C_I$ is a
$\Sphere^{\ell - j}$-bundle over $F_{I,j} \setminus F_{I,j-1}$ under the
map $\phi_1$, and $C_I$ is a union of these sphere bundles. We have
good control over the bases, $F_{I,j} \setminus F_{I,j-1}$, of these bundles,
that is we have good bounds on the number as well as 
the degrees of polynomials 
used to define them. 
However, these bundles could be possibly 
glued to each other in complicated ways, and it is not immediate
how to control this glueing data, since different
types of glueing could give rise to different homotopy types of the 
underlying space. 
In order to get around this difficulty, we consider 
certain closed subsets, $F_{I,j}'$ of $F_I$, 
where each $F_{I,j}'$ is an infinitesimal
deformation of $F_{I,j} \setminus F_{I,j-1}$, 
and form the base of a $\Sphere^{\ell - j}$-bundle. Moreover, these new 
sphere bundles are glued to each other along sphere bundles over
$F_{I,j}' \cap F_{I,j-1}'$, and their union, 
$C'_I$,  is homotopy equivalent to $C_I$. Finally, the polynomials defining
the sets $F_{I,j}'$ are in general position in a very strong sense, and
this property is used later to bound the number of isotopy classes of the
sets $F_{I,j}'$ in the parametrized situation.

We now make precise the argument outlined above. Let 
$\Lambda_I$ be the polynomial in $ \R[Z_1,\ldots,Z_m,X_1,\ldots,X_k,T]$ 
defined by
\begin{eqnarray*}
\Lambda_I  &=& \det(M_{Z_I \cdot Q} + T\; {{\rm Id}}_{\ell+1}),\\
   &=&  T^{\ell+1} + H_{I,\ell} T^\ell + \cdots + H_{I,0},
\end{eqnarray*}
where $Z_I \cdot Q = \sum_{i \in I}  Z_i Q_i$, and 
each $H_{I,j} \in \R[Z_1,\ldots,Z_m,X_1,\ldots,X_k]$.

Notice, that $H_{I,j}$ is obtained from
$H_j = H_{[m],j}$ by setting
for each $i \not\in I$, the variable $Z_i$ to $0$ in the polynomial $H_j$.

Note also that for $(\z,\x) \in \R^m\times\R^k$, the polynomial
$\Lambda_I(\z,\x,T)$ being the characteristic polynomial of a real symmetric
matrix has all its roots real. 
It then follows from Descartes' rule of signs 
(see for instance \cite{BPR03}),
that for each $(\z,\x) \in \R^m \times \R^k$, 
where $\z_i = 0$ for all $ i \not\in I$,
${\rm index}(\z Q(\cdot,\x))$ is determined by the sign vector
\[
({\rm sign}(H_{I,\ell}(\z,\x)),\ldots,{\rm sign}(H_{I,0}(\z,\x))).
\]  
Hence, denoting by 
\begin{equation}
\label{eqn:defofH_I}
{\mathcal H}_I = 
\{H_{I,0},\ldots,H_{I,\ell}\} \subset \R[Z_1,\ldots,Z_m,X_1,\ldots,X_k],
\end{equation}
we have

\begin{lemma}
For each $j, 0 \leq j \leq \ell+1$, 
$F_{I,j}$ is the intersection of $F_I$
with
a ${\mathcal H_I}$-closed semi-algebraic set 
$D_{I,j} \subset \R^{m+k}$.
\end{lemma}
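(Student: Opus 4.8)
The plan is to exhibit, for each $j$, an explicit $\mathcal H_I$-closed description of $F_{I,j}$ as a subset of $\R^{m+k}$, and then intersect with $F_I$. The key observation, already set up in the preceding paragraph, is that the characteristic polynomial $\Lambda_I(\z,\x,T) = T^{\ell+1} + H_{I,\ell}T^\ell + \cdots + H_{I,0}$ of the real symmetric matrix $M_{\z_I\cdot Q(\cdot,\x)}$ has all real roots, so Descartes' rule of signs tells us that the number of negative roots of $\Lambda_I(\z,\x,T)$ — equivalently, the number of sign changes in the sequence of coefficients — equals $\mathrm{index}(\z Q(\cdot,\x))$ (reading coefficients from $H_{I,0}$ up to the leading $1$, and with the usual convention that zero coefficients are skipped, noting that a zero eigenvalue of multiplicity $r$ forces $H_{I,0} = \cdots = H_{I,r-1} = 0$). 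Hence the condition $\mathrm{index}(\z Q(\cdot,\x)) \leq j$ is equivalent to a condition on the sign vector $(\mathrm{sign}(H_{I,0}(\z,\x)),\ldots,\mathrm{sign}(H_{I,\ell}(\z,\x)),+1)$ of the coefficients of $\Lambda_I$.

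Concretely, I would define $D_{I,j} \subset \R^{m+k}$ to be the union, over all sign vectors $\tau \in \{0,1,-1\}^{\{0,\ldots,\ell\}}$ whose associated coefficient sequence $(\tau_0,\ldots,\tau_\ell,+1)$ (after deleting zero entries) has at most $j$ sign changes, of the sets $\{(\z,\x) \mid \bigwedge_{i} \mathrm{sign}(H_{I,i}(\z,\x)) = \tau_i\}$. This is a finite union of sets each cut out by conjunctions of conditions of the form $H_{I,i} = 0$, $H_{I,i} > 0$, $H_{I,i} < 0$ with $H_{I,i} \in \mathcal H_I$; taking the (disjunction, and hence) union makes $D_{I,j}$ a $\mathcal H_I$-semi-algebraic set. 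To see that it is in fact $\mathcal H_I$-\emph{closed}, I would rewrite the count-of-sign-changes condition using only weak inequalities: "at most $j$ sign changes in $(H_{I,0},\ldots,H_{I,\ell},1)$" is a closed condition (the set of coefficient vectors of real-rooted polynomials of a given degree with index $\le j$ is closed, being defined by $H_{I,j'} = 0$ for $j' < \mathrm{something}$ together with semi-definiteness-type sign patterns), so one can choose a $\mathcal H_I$-closed formula — a disjunction over the finitely many maximal closed sign conditions $\tau$ compatible with index $\le j$, each being a conjunction of atoms $H_{I,i} \ge 0$, $H_{I,i} \le 0$, $H_{I,i} = 0$ — defining the same set. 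The filtration $D_{I,0} \subset D_{I,1} \subset \cdots$ is automatic since "index $\le j$" implies "index $\le j+1$".

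Finally, since for $(\omega,\x) \in F_I = \Omega_I \times \R^k$ one has $\omega_i = 0$ for $i \notin I$, the polynomial $\omega Q(\cdot,\x)$ coincides with $(\z_I \cdot Q)(\cdot,\x)$ evaluated at $\z = \omega$, and $\mathrm{index}(\omega Q(\cdot,\x)) \le j$ precisely when $\omega$ (as a point of $\R^m$ with the appropriate coordinates vanishing) lies in $D_{I,j}$; comparing with the definition \eqref{eqn:defofF_Ij} of $F_{I,j}$ gives $F_{I,j} = F_I \cap D_{I,j}$, as claimed. The only point requiring care — and the step I expect to be the main (minor) obstacle — is the bookkeeping needed to verify that the sign-change count really can be captured by a \emph{negation-free} (closed) formula in the $H_{I,i}$, rather than merely by a general Boolean combination; this is where the real-rootedness of $\Lambda_I$ is essential, as it collapses the a priori complicated "number of sign changes" predicate into a closed semi-algebraic condition on the coefficients.
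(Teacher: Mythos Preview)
Your approach is correct and matches the paper's: the lemma is stated there without proof, immediately after the observation that Descartes' rule of signs determines $\mathrm{index}(\z Q(\cdot,\x))$ from the sign vector of the $H_{I,i}$, and you have correctly filled in the details, including the key point that ``at most $j$ sign changes'' can be expressed by a negation-free disjunction of conjunctions of weak inequalities $H_{I,i}\ge 0$, $H_{I,i}\le 0$. One small slip: since $\Lambda_I(T)=\det(M_{\z_I\cdot Q}+T\,\mathrm{Id})$ has roots $-\lambda_i$, the index equals the number of \emph{positive} (not negative) roots of $\Lambda_I$, though this does not affect the substance of the argument.
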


\begin{notation}
Let $D_{I,j}$ be defined by the formula
\begin{equation}
\label{eqn:defofD_Ij}
D_{I,j} = \bigcup_{\sigma \in \Sigma_{I,j}} \RR(\sigma),
\end{equation}
for some $\Sigma_{I,j} \subset {\rm Sign}({\mathcal H_I})$. 
Note that,
${\rm Sign}({\mathcal H}_I) \subset {\rm Sign}({\mathcal H})$ and 
$\Sigma_{I,j} \subset \Sigma_j$ for all $I \subset [m]$.

Now, let $\bar{\delta}=(\delta_\ell,\ldots,\delta_0)$ and 
$\bar{\eps}=(\eps_{\ell+1},\ldots,\eps_0)$ be infinitesimals such that 
\[
0 < \delta_0 \ll \cdots\ll \delta_{\ell} \ll \eps_{0} \ll \cdots \ll \eps_{\ell+1} \ll 1,
\]
and let 
\begin{equation}\label{eqn:defofR'}
\R' = \R\la\bar{\eps},\bar{\delta}\ra
\end{equation}

Given $\sigma \in {\rm Sign}({\mathcal H}_I)$,
and $0 \leq j \leq \ell+1$, 
we denote by $\RR(\sigma^c_j) \subset \R'^{m+k}$ 
the set defined 
by the formula $\sigma^c_j$ obtained by taking the 
conjunction of
\[
\begin{array}{l}
 -\eps_j - \delta_i \leq H_{I,i} \leq \eps_j + \delta_i \mbox{ for each } 
H_{I,i} \in {\mathcal H}_I
\mbox{ such that } \sigma(H_{I,i}) = 0, \cr
H_{I,i} \geq - \eps_j - \delta_i,  \mbox{ for each } H_{I,i} \in {\mathcal H}_I
\mbox{ such that } \sigma(H_{I,i}) = 1, \cr
H_{I,i} \leq  \eps_j + \delta_i, \mbox{ for each }  H_{I,i} \in {\mathcal H}_I 
\mbox{ such that } \sigma(H_{I,i}) = -1.
\end{array}
\]

Similarly,
we denote by
$\RR(\sigma^o_j) \subset \R'^{m+k}$ the set defined 
by the formula $\sigma^o$ obtained by taking the 
conjunction of
\[
\begin{array}{l}
- \eps_j - \delta_i < H_{I,i} <  \eps_j + \delta_i \mbox{ for each } 
H_{i,I} \in {\mathcal H}_I
\mbox{ such that } \sigma(H_{I,i}) = 0, \cr
H_{I,i} > -  \eps_j - \delta_i,  \mbox{ for each } H_{I,i} \in {\mathcal H}_I
\mbox{ such that } \sigma(H_{I,i}) = 1, \cr
H_{I,i} <  \eps_j + \delta_i, \mbox{ for each }  H_{I,i} \in {\mathcal H}_I 
\mbox{ such that } \sigma(H_{I,i}) = -1.
\end{array}
\]
\end{notation}

For each $j, 0 \leq j \leq \ell+1$, let

\begin{eqnarray}
D_{I,j}^o &=& \bigcup_{\sigma \in \Sigma_{I,j}} \RR(\sigma_j^o),\nonumber \\
D_{I,j}^c &=& \bigcup_{\sigma \in \Sigma_{I,j}} \RR(\sigma_j^c), \nonumber\\
D_{I,j}' &=& D_{I,j}^c \setminus D_{I,j-1}^o,\nonumber \\
F_{I,j}' &=& \E(F_I,\R') \cap D_{I,j}'. \label{def:Fprime}
\end{eqnarray}
where we denote by $D_{I,-1}^o = \emptyset~$. We also denote by
$F'_I = \E(F_I,\R')$.

We now note some extra properties of the sets $D'_{I,j}$'s.

\begin{lemma}
For each $j, 0 \leq j \leq \ell+1$, $D_{I,j}'$ is a 
${\mathcal H}_I'$-closed semi-algebraic set,
where 
\begin{equation}
\label{eqn:defofH_I'}
{\mathcal H}'_I = \bigcup_{i=0}^{\ell} \bigcup_{j=0}^{\ell+1}\{
H_{I,i} + \eps_j + \delta_i,
H_{I,i} - \eps_j -\delta_i\}.
\end{equation}
\end{lemma}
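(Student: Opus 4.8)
The plan is to unwind the definition $D_{I,j}' = D_{I,j}^c \setminus D_{I,j-1}^o$ and to observe that both $D_{I,j}^c$ and the complement of $D_{I,j-1}^o$ are ${\mathcal H}_I'$-closed, so that their intersection is as well. First I would note that each realization appearing in $D_{I,j}^c = \bigcup_{\sigma \in \Sigma_{I,j}} \RR(\sigma_j^c)$ is, by the very formula $\sigma^c_j$, defined by a conjunction of weak inequalities each of which is equivalent to one of $H_{I,i} + \eps_j + \delta_i \geq 0$ or $H_{I,i} - \eps_j - \delta_i \leq 0$; indeed the double inequality $-\eps_j - \delta_i \leq H_{I,i} \leq \eps_j + \delta_i$ is precisely the conjunction of these two, and the one-sided conditions $H_{I,i} \geq -\eps_j-\delta_i$ and $H_{I,i} \leq \eps_j+\delta_i$ are single such atoms. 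Since each polynomial occurring here lies in ${\mathcal H}_I'$ (cf. (\ref{eqn:defofH_I'})), every $\RR(\sigma_j^c)$ is ${\mathcal H}_I'$-closed, and hence so is the finite union $D_{I,j}^c$.

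Next I would treat $D_{I,j-1}^o = \bigcup_{\sigma \in \Sigma_{I,j-1}} \RR(\sigma_{j-1}^o)$. Each $\RR(\sigma_{j-1}^o)$ is defined by a conjunction of strict inequalities in the polynomials $H_{I,i} \pm (\eps_{j-1} + \delta_i) \in {\mathcal H}_I'$, so by De Morgan its complement in $\R'^{m+k}$ is the disjunction of the negated atoms; negating $P > 0$ yields $P \leq 0$ and negating $P < 0$ yields $P \geq 0$, so no new polynomials are introduced and the complement is a ${\mathcal H}_I'$-closed set. Intersecting over $\sigma \in \Sigma_{I,j-1}$, the set $\R'^{m+k} \setminus D_{I,j-1}^o$ is then ${\mathcal H}_I'$-closed, being a finite intersection of ${\mathcal H}_I'$-closed sets. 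For $j = 0$ one invokes the convention $D_{I,-1}^o = \emptyset$, whose complement is all of $\R'^{m+k}$, described by the empty ${\mathcal H}_I'$-formula, so this case is trivial.

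Finally, writing $D_{I,j}' = D_{I,j}^c \cap \bigl(\R'^{m+k} \setminus D_{I,j-1}^o\bigr)$, the set $D_{I,j}'$ is an intersection of two ${\mathcal H}_I'$-closed semi-algebraic sets, and the class of ${\mathcal H}_I'$-closed sets is closed under finite unions and intersections (these correspond to disjunction and conjunction of negation-free formulas), whence $D_{I,j}'$ is ${\mathcal H}_I'$-closed. There is no genuine obstacle beyond the bookkeeping; the only point requiring a moment's care is that taking the set difference with the \emph{open} set $D_{I,j-1}^o$ does not reintroduce negations into the defining formula, and this is exactly what the De Morgan step above ensures, since the negation of a strict inequality on a polynomial $P$ is a weak inequality on the same $P$.
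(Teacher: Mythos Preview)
Your argument is correct and is exactly the unwinding that the paper has in mind: the paper's own proof consists of the single sentence ``Follows from the definition of the sets $D_{I,j}'$,'' and your write-up simply makes this explicit. There is nothing to add.
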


\begin{proof}
Follows from the definition of the sets $D_{I,j}'$.
\end{proof}
\begin{lemma}
\label{lem:local}
For 
$0 \leq j+1 < i \leq \ell+1$,
\[
D_{I,i}' \cap D_{I,j}' = \emptyset.
\]
\end{lemma}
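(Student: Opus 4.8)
The plan is to compare, at a fixed point $(\z,\x)$ (with $\z_i = 0$ for $i \notin I$), the constraints that define membership in $D_{I,i}'$ and in $D_{I,j}'$ and show they are incompatible when $i$ and $j$ differ by at least two. First I would recall the mechanism relating sign vectors of $\mathcal{H}_I$ to the index: by Descartes' rule of signs applied to the characteristic polynomial $\Lambda_I(\z,\x,T) = T^{\ell+1} + H_{I,\ell}T^\ell + \cdots + H_{I,0}$, which has only real roots, ${\rm index}(\z Q(\cdot,\x)) = j$ exactly when the realizable sign condition $\sigma$ at $(\z,\x)$ lies in $\Sigma_{I,j}$, and the sets $\Sigma_{I,j}$ (for $0 \le j \le \ell+1$) partition ${\rm Sign}(\mathcal{H}_I)$. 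The key quantitative point is that whether $(\z,\x) \in D_{I,i}^c$ or $\in D_{I,i-1}^o$ is governed not by the exact signs of the $H_{I,i}$ but by their signs \emph{up to the threshold $\eps_i + \delta_i$} (resp. strictly below it); so I would introduce, for a point $(\z,\x)$, the ``$\eps_i$-perturbed sign vector'' $\sigma^{(i)}$ whose $i'$-th entry records whether $H_{I,i'}(\z,\x)$ is $\le -\eps_i - \delta_{i'}$, in the open band, or $\ge \eps_i + \delta_{i'}$.

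The core of the argument is the following monotonicity/nesting observation. Since $\delta_0 \ll \cdots \ll \delta_\ell \ll \eps_0 \ll \cdots \ll \eps_{\ell+1} \ll 1$, for $j < i$ the thresholds satisfy $\eps_j + \delta_{i'} < \eps_i + \delta_{i'}$ for every $i'$, so $D_{I,j}^c \subset D_{I,j}^o$ (closed band inside the next open band) and more generally $D_{I,j}^c \subset \overline{D_{I,j}^o} \subset D_{I,j+1}^o$ — here I would use that $D_{I,j}^o$ is defined by strict inequalities with strictly larger thresholds, which leaves room even after taking closure. Hence $D_{I,j}' = D_{I,j}^c \setminus D_{I,j-1}^o$, and I claim that $D_{I,i}^c \cap (\R'^{m+k} \setminus D_{I,i-1}^o)$ already forces the $\eps$-perturbed sign data to be ``close to level $i$'', while $D_{I,j}^c$ forces it to be ``close to level $j$''; concretely, I would show $D_{I,i}' \subset D_{I,i}^c \setminus D_{I,i-1}^o \subset D_{I,i}^c \setminus D_{I,j}^c$ whenever $j \le i-1$, because $D_{I,j}^c \subset D_{I,j}^o \subset D_{I,i-1}^o$ when $j \le i-1$ (again by the ordering of thresholds, since $j \le i-1$ gives $\eps_j < \eps_{i-1}$ and all $\delta$'s are dominated). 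But symmetrically $D_{I,j}' \subset D_{I,j}^c$. Therefore $D_{I,i}' \cap D_{I,j}' \subset (D_{I,i}^c \setminus D_{I,j}^c) \cap D_{I,j}^c = \emptyset$.

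The main obstacle I anticipate is bookkeeping the chain of inclusions $D_{I,j}^c \subset D_{I,j}^o \subset D_{I,i-1}^o$ cleanly: one must check that passing from the ``$c$'' (closed, $\le$) version at level $j$ to the ``$o$'' (open, $<$) version at level $i-1$ really is an honest inclusion, which needs $\eps_j + \delta_{i'} < \eps_{i-1} + \delta_{i'}$ \emph{strictly} for all $i'$, i.e. $j \le i-1$, together with the fact that each $\RR(\sigma_j^c)$ for $\sigma \in \Sigma_{I,j} \subset \Sigma_{I,i-1}$... wait — one must be careful that $\Sigma_{I,j}$ and $\Sigma_{I,i-1}$ are \emph{disjoint}, not nested, so the inclusion $D_{I,j}^c \subset D_{I,i-1}^o$ is genuinely a statement about the fattened bands overlapping across different sign strata, and it rests on the observation that a point whose true signs put it in stratum $j$ lies, after fattening by $\eps_{i-1}$, inside the fattened realization of \emph{some} $\sigma' \in \Sigma_{I,i-1}$ — equivalently, that index can change by at most $1$ as one crosses from one fattened band to an adjacent one, a consequence of interlacing of the perturbed roots. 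Once this ``index changes by at most one per band'' fact is pinned down (it is essentially the reason the whole filtration scheme works), the disjointness for $|i-j| \ge 2$ is immediate. I would therefore isolate that fact as the one nontrivial lemma and derive Lemma~\ref{lem:local} from it in a couple of lines.
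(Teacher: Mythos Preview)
Your overall architecture is exactly the paper's: show $D_{I,j}^c \subset D_{I,i-1}^o$ whenever $j \le i-1$, and conclude that $D_{I,i}' = D_{I,i}^c \setminus D_{I,i-1}^o$ is disjoint from $D_{I,j}' \subset D_{I,j}^c$. That part is fine.

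The genuine gap is your claim that ``$\Sigma_{I,j}$ and $\Sigma_{I,i-1}$ are \emph{disjoint}, not nested''. This is backwards. Recall that $F_{I,j} = \{(\omega,\x) : {\rm index}(\omega Q(\cdot,\x)) \le j\}$, so the $D_{I,j}$ form an increasing chain $D_{I,0} \subset D_{I,1} \subset \cdots \subset D_{I,\ell+1}$, and correspondingly $\Sigma_{I,j} \subset \Sigma_{I,i-1}$ for $j \le i-1$. The set $\Sigma_{I,j}$ collects the sign conditions on which the index is \emph{at most} $j$, not \emph{exactly} $j$. Once you see this, the inclusion $D_{I,j}^c \subset D_{I,i-1}^o$ is immediate: for each $\sigma \in \Sigma_{I,j} \subset \Sigma_{I,i-1}$ one has $\RR(\sigma_j^c) \subset \RR(\sigma_{i-1}^o)$ because $\eps_j + \delta_{i'} < \eps_{i-1} + \delta_{i'}$ strictly for every $i'$ (from $\eps_j \ll \eps_{i-1}$), and then take the union over $\sigma$. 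No ``index changes by at most one per band'' lemma, no interlacing of perturbed roots, and no Descartes' rule argument is needed here; the lemma is pure bookkeeping with the infinitesimals once the nesting of the $\Sigma_{I,j}$'s is in place.

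A minor slip: the inclusion $D_{I,j}^c \subset D_{I,j}^o$ you wrote at the same level $j$ is the wrong way round (open is contained in closed, not conversely). What you actually need, and what holds, is $D_{I,j}^c \subset D_{I,i-1}^o$ across two different levels, which is exactly the chain $D_{j-1}^o \subset D_j^c \subset D_{i-1}^o \subset D_i^c$ the paper records.
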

\begin{proof}
In order to keep notation simple we prove the proposition
only for ${I = [m]}$. The proof for a general $I$ is identical. 
The inclusions,
\[
\displaylines{
D_{j-1} \subset D_j \subset D_{i-1} \subset D_i,\cr
D_{j-1}^o \subset 
D_j^c \subset D_{i-1}^o \subset D_i^c. 
}
\]
follow directly from the definitions of the sets 
\[
D_i,D_j,D_{j-1},D_i^c,D_j^c,D_{i-1}^o, D_{j-1}^o,
\]
and the fact that,
\[
\eps_{j-1} \ll \eps_{j} \ll  \eps_{i-1} \ll \eps_{i}.
\]

It follows immediately that,
\[
D_i' = D_i^c\setminus D_{i-1}^o
\]
is disjoint from $D_j^c$,
and hence from $D_j'$.
\end{proof}
We now associate to each $F'_{I,j}$  
a $(\ell - j)$-dimensional sphere bundle as follows. 
For each 
$(\omega,\x) \in F_{I,j}'' = F_{I,j}\setminus F'_{I,j-1}$, let 
$L_j^+(\omega,\x) \subset \R^{\ell+1}$ denote the sum of the 
non-negative eigenspaces of 
$\omega Q(\cdot,\x)$ (i.e. $L_j^+(\omega,\x)$ is the largest linear
subspace  of $\R^{\ell+1}$ on which $\omega Q(\cdot,\x)$ is positive 
semi-definite). 
Since  ${\rm index}(\omega Q(\cdot,\x)) = j$ stays invariant as
$(\omega,\x)$ varies over $F''_{I,j}$,
$L_j^+(\omega,\x)$ varies continuously with $(\omega,\x)$.

Let,
\[
\lambda_0(\omega,\x) \leq \cdots \leq \lambda_{j-1}(\omega,\x) < 0 \leq \lambda_j(\omega,\x) \leq \cdots \leq \lambda_{\ell}(\omega,\x),
\]
be the eigenvalues of $\omega Q(\cdot,\x)$ for 
$(\omega,\x) \in F''_{I,j}$.
There is a continuous extension of the map sending
$(\omega,\x) \mapsto L_j^+(\omega,\x)$ to 
$(\omega,\x) \in F'_{I,j}$. 

To see this observe that for $(\omega,\x) \in F''_{I,j}$ 
the block of the first $j$ (negative) eigenvalues,
${\lambda_0(\omega,\x) \leq \cdots \leq \lambda_{j-1}(\omega,\x)}$,
and hence the sum of the eigenspaces corresponding to them can be extended
continuously to any infinitesimal neighborhood of 
$F''_{I,j}$, and in particular to
$F'_{I,j}$. Now $L_j^+(\omega,\x)$ is the orthogonal
complement of the sum of the eigenspaces corresponding to the block of negative eigenvalues,
$\lambda_0(\omega,\x) \leq \cdots \leq \lambda_{j-1}(\omega,\x)$.

We will denote by 
$C'_{I,j}\subset F'_{I,j} \times \R'^{\ell+1}$ 
the semi-algebraic set defined by

\begin{equation}
\label{eqn:defofC_Ij'}
C'_{I,j}  =  \{(\omega,\y,\x) \;\mid\; (\omega,\x) \in F'_{I,j}, 
\y \in L_j^+(\omega,\x), |\y| = 1\}.
\end{equation}

Note that the projection $\pi_{I,j}: C'_{I,j} \rightarrow F'_{I,j}$,
makes $C'_{I,j}$ the total space of a $(\ell - j)$-dimensional sphere bundle
over $F'_{I,j}$.

Now observe that, 
\[
C'_{I,j-1} \cap C'_{I,j} = \pi_{I,j}^{-1}( F'_{I,j} \cap F'_{I,j-1} ),
\]
and 
\[
\pi_{I,j}|_{C'_{I,j-1} \cap C'_{I,j}}:C'_{I,j-1} \cap C'_{I,j} \rightarrow 
F'_{I,j} \cap F'_{I,j-1}
\]
is also a  $(\ell - j)$ dimensional sphere bundle over 
$F'_{I,j} \cap F'_{I,j-1}$.

Let 
\begin{equation}
\label{eqn:defofC'}
C'_I = \bigcup_{j=0}^{\ell+1} C'_{I,j}.
\end{equation}

We have that
\begin{proposition}
\label{prop:homotopy3}
$C'_I$ is homotopy equivalent to $\E(C_I,\R')$,
where $C_I$ and $\R'$ are defined in (\ref{eqn:definition_of_C}) and 
(\ref{eqn:defofR'}) respectively.
\end{proposition}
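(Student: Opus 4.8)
The plan is to exhibit $\E(C_I,\R')$ as a deformation retract of a slightly thickened set whose pieces match up with the $C'_{I,j}$, and then to stitch these retractions together. Recall that $C_I = \bigcup_{j=0}^{\ell+1} \{(\omega,\y,\x)\mid (\omega,\x)\in F_{I,j}\setminus F_{I,j-1}, \y\in L_j^+(\omega,\x), |\y|=1\}$, while $C'_I = \bigcup_{j=0}^{\ell+1} C'_{I,j}$ with $C'_{I,j}$ the sphere bundle over $F'_{I,j} = \E(F_I,\R')\cap D'_{I,j}$. By construction $D'_{I,j} = D^c_{I,j}\setminus D^o_{I,j-1}$ is an infinitesimal ``fattening'' of $D_{I,j}\setminus D_{I,j-1}$: as the infinitesimals $\eps_j,\delta_i$ tend to zero, $D^c_{I,j}$ shrinks to $D_{I,j}$ and $D^o_{I,j-1}$ to the interior of $D_{I,j-1}$ relative to the sign stratification, so $D'_{I,j}$ tends to $D_{I,j}\setminus D_{I,j-1}$. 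First I would make this precise: using the fact that the $\eps$'s and $\delta$'s are chosen in the strongly separated order $0<\delta_0\ll\cdots\ll\delta_\ell\ll\eps_0\ll\cdots\ll\eps_{\ell+1}\ll 1$, one shows that $\E(F_{I,j}\setminus F_{I,j-1},\R')$ is a semi-algebraic deformation retract of $F'_{I,j}$, the retraction being ``flow $H_{I,i}$ back across the thin slab'' monotonically, exactly as in the standard construction of a closed infinitesimal neighborhood retracting onto a locally closed semi-algebraic set (cf. the analogous arguments in \cite{BPR03}).

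Second, I would lift each such base retraction to the total space. Over $F'_{I,j}$ the continuous family of subspaces $L_j^+(\omega,\x)$ has already been extended from $F''_{I,j}$ by the eigenvalue-block argument given just above the definition \eqref{eqn:defofC_Ij'}, so $C'_{I,j}$ is genuinely a $\Sphere^{\ell-j}$-bundle over $F'_{I,j}$ restricting to the bundle $\E(C_{I,j},\R')$ over $\E(F_{I,j}\setminus F_{I,j-1},\R')$. A fiber bundle over a space $Y$ restricted to a deformation retract $Y_0\subset Y$ is a deformation retract of the total space (one covers $Y$ by trivializing opens, retracts fiberwise, and patches with a partition of unity — over a real closed field one invokes the semi-algebraic versions of these tools and Remark \ref{rem:transfer}). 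Hence each $C'_{I,j}$ deformation retracts onto $\E(C_{I,j},\R')$.

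The crux — and the step I expect to be the main obstacle — is to perform all these retractions \emph{compatibly}, so that they assemble into a global deformation retraction of $C'_I=\bigcup_j C'_{I,j}$ onto $\E(C_I,\R')=\bigcup_j \E(C_{I,j},\R')$. Here Lemma \ref{lem:local} is essential: since $D'_{I,i}\cap D'_{I,j}=\emptyset$ whenever $|i-j|\geq 2$, the only overlaps are between consecutive pieces $C'_{I,j-1}\cap C'_{I,j}=\pi_{I,j}^{-1}(F'_{I,j}\cap F'_{I,j-1})$, which is itself a $\Sphere^{\ell-j}$-sub-bundle. So I would carry out the retraction in order of decreasing $j$, as in the proof of Proposition \ref{prop:homotopy1}: first retract the ``top'' slab $C'_{I,\ell+1}$ onto its core while doing nothing outside an infinitesimal collar, arranging that on the overlap with $C'_{I,\ell}$ the retraction agrees with (the restriction of) the retraction to be used on $C'_{I,\ell}$; then proceed to $j=\ell$, and so on. The compatibility on overlaps is exactly the hypothesis needed to glue the fiberwise retractions, and the hierarchical ordering $\delta_0\ll\cdots\ll\eps_{\ell+1}$ is what guarantees that the collar used at stage $j$ sits inside the region already fixed at stage $j+1$, so the stages do not interfere. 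Chaining the finitely many homotopy equivalences (or invoking Lemma \ref{lem:hocolimit1}/\ref{lem:hocolimit2} applied to the covers $\{C'_{I,j}\}$ and $\{\E(C_{I,j},\R')\}$ together with the pieces-and-overlaps equivalences just established) yields that $C'_I$ is homotopy equivalent to $\E(C_I,\R')$, which is the assertion of the proposition.
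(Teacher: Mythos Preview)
Your route is quite different from the paper's, and Step~1 contains a real gap. The paper does not attempt any piece-by-piece retraction. Instead it sends the infinitesimals $\delta_0,\ldots,\delta_\ell,\eps_0,\ldots,\eps_{\ell+1}$ to zero one at a time, in increasing order of magnitude: setting $C_{I,-1}=C'_I$ and $C_{I,i}=\lim C_{I,i-1}$ (limit in the $i$-th infinitesimal), it observes that the one-parameter family $C_{I,i-1,t}$ obtained by replacing that infinitesimal by a real variable $t$ consists of closed, bounded, nested semi-algebraic sets for small $t>0$, so Lemma~16.17 of \cite{BPR03} gives a homotopy equivalence $\E(C_{I,i},\R_i)\simeq C_{I,i-1}$ at each stage. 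After all the limits one arrives at $C_I$, and chaining the equivalences yields the proposition.

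The gap in your argument is the implicit inclusion $\E(F_{I,j}\setminus F_{I,j-1},\R')\subset F'_{I,j}$, which is false. A point $(\omega,\x)\in\R'^{m+k}$ with ${\rm index}(\omega Q(\cdot,\x))=j$ has sign vector $\sigma_0\in\Sigma_{I,j}\setminus\Sigma_{I,j-1}$, but it may still satisfy $\sigma^o_{j-1}$ for some $\sigma\in\Sigma_{I,j-1}$: the relaxed formula $\sigma^o_{j-1}$ only requires each $H_{I,i}$ to lie on the correct side of $\pm(\eps_{j-1}+\delta_i)$, and if some $H_{I,i}(\omega,\x)$ is itself infinitesimal in $\R'$ this is easily arranged even when $\sigma(H_{I,i})\neq\sigma_0(H_{I,i})$. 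Such a point lies in $D^o_{I,j-1}$, hence outside $D'_{I,j}=D^c_{I,j}\setminus D^o_{I,j-1}$, hence outside $F'_{I,j}$. So $\E(F_{I,j}\setminus F_{I,j-1},\R')$ is not a subset of $F'_{I,j}$, let alone a deformation retract, and Steps~2--3 (as well as the hocolim alternative, which presupposes the same piecewise equivalences) never get started. It is true that the \emph{global} inclusion $\E(C_I,\R')\subset C'_I$ holds --- a point with index $j$ that falls into $D^o_{I,j-1}$ lands in some $F'_{I,j'}$ with $j'\le j$, and then $L_j^+\subset L_{j'}^+$ --- but converting this into a retraction forces you to track how points migrate between different $j$-levels under the shrinking, which is precisely the bookkeeping that the paper's monotone-limit argument handles for free.
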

\begin{proof}
Let $\bar{\eps}=(\eps_{\ell+1},\ldots,\eps_0)$ and let 
\[
R_i=
\begin{cases}
R\la\bar{\eps},\delta_\ell,\ldots,\delta_i\ra \text{, }0\le i\le \ell,\\
R\la\eps_{\ell+1},\ldots,\eps_{i-\ell-1} \ra \text{, }\ell+1\le i\le 2\ell+2,\\
R \text{, }i = 2\ell+3.
\end{cases}
\]
First observe that $C_I = \lim_{\eps_{\ell+1}} C_I'$ where $C_I$ is the 
semi-algebraic set defined in (\ref{eqn:definition_of_C}) above. 

Now let,
\begin{eqnarray*}
C_{I,-1} &=& C_I',\\
C_{I,0} &=& \lim_{\delta_0} C_I', \\
C_{I,i} &=& \lim_{\delta_i} C_{I,i-1}, 1 \leq i \leq \ell, \\
C_{I,\ell+1} &=& \lim_{\eps_0} C_{I,\ell}, \\
C_{I,i} &=& \lim_{\eps_{i-\ell-2}} C_{I,i-1}, \ell+2\le i\le 2\ell+3.
\end{eqnarray*}

Notice that each $C_{I,i}$ is a closed and bounded semi-algebraic set.
Also, for $i\ge 0$, let $C_{I,i-1,t} \subset \R_{i}^{m+\ell+k}$
be the semi-algebraic set obtained by 
replacing $\delta_i$ (resp., $\eps_i$) in the definition of $C_{I,i-1}$
by the variable $t$.
Then, there exists $t_0 > 0$, such that for all $0 < t_1 < t_2 \leq t_0$, 
$C_{I,i-1,t_1} \subset C_{I,i-1,t_2}$. 

It follows (see Lemma 16.17 in \cite{BPR03}) that for each $i$,
$0 \leq i \leq 2\ell+3$, 
$\E(C_{I,i}, \R_i)$ is homotopy equivalent to $C_{I,i-1}$.
\end{proof}
%

\subsubsection{Partitioning the parameter space}
\label{sec:whitney}
%
The goal of this section is to prove the following proposition
(Proposition \ref{prop:main}).
The techniques used in the proof are  similar to those 
used in \cite{BV06} for proving a similar result.
We go through the
proof in detail in order to extract the right bound in terms
of the parameters $d,k,\ell$ and $m$.

\begin{proposition}
\label{prop:main}
There exists a finite set of points $T\subset\R^k$,  
with 
\[
\# T \leq (2^m\ell k d)^{O(mk)},
\]  
such that for any $\x \in \R^k$, there
exists $\z\in T$, with the following property.

There is a semi-algebraic path,
$\gamma: [0,1] \rightarrow \R'^k$ and a continuous semi-algebraic map,
$\phi: \Omega \times [0,1]  \rightarrow \Omega $ 
(see (\ref{eqn:defofOmega_I}) and (\ref{eqn:defofR'}) for the definition of 
$\Omega$ and $\R'$), 
with 
$\gamma(0) = \x$, $\gamma(1) = \z$, and
for each $I \subset [m]$,
\[
\phi(\cdot,t)|_{F'_{I,j,\x}}: F'_{I,j,\x} \rightarrow F_{I,j,\gamma(t)}'
\]
is a homeomorphism for each $0 \leq t \leq 1$. 
\end{proposition}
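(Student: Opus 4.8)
The plan is to reduce the statement to a quantitative version of Hardt's triviality theorem applied to the collection of semi-algebraic sets $F'_{I,j}$, viewed simultaneously as subsets of $\R'^{m+k}$ fibered over the parameter space $\R'^k$ via the projection on the $X$-coordinates. The starting observation is that all the $F'_{I,j}$ (for all $I \subset [m]$ and $0 \leq j \leq \ell+1$) are $\mathcal{H}'$-closed semi-algebraic sets, where $\mathcal{H}' = \bigcup_{I \subset [m]} \mathcal{H}'_I$ together with the finitely many polynomials (of degree $2$ in $Y$ and degree at most $d$ in $X$) needed to describe the spheres $\Omega_I$ and the eigenvector blocks. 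The key point is that the polynomials $H_{I,j}$, being (up to setting some $Z_i$ to $0$) the coefficients of the characteristic polynomial $\Lambda_{[m]} = \det(M_{Z \cdot Q} + T\,\Id_{\ell+1})$, have degree $O(\ell)$ in $Z_1,\ldots,Z_m$ and degree $O(\ell d)$ in $X_1,\ldots,X_k$; hence the total number of polynomials in $\mathcal{H}'$ is bounded by $2^m \cdot (\ell+1) \cdot (\ell+2) \cdot 2 = (2^m \ell)^{O(1)}$, each of degree $(\ell d)^{O(1)}$ in $m + k$ variables.

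Next I would invoke the semi-algebraic triviality theorem in the parametrized form used in \cite{BV06}: there is a semi-algebraic partition of $\R'^k$ into finitely many locally closed sets $\{T_\alpha\}$ such that, over each $T_\alpha$, the family of all the $F'_{I,j}$ (together with their pairwise intersections $F'_{I,j} \cap F'_{I,j-1}$) is simultaneously trivial by a fiber-preserving homeomorphism that is also compatible with the containments among the various pieces. The quantitative bound on the number of such partition pieces follows from the effective cylindrical-decomposition-free arguments in \cite{BV06}: with $s = (2^m\ell)^{O(1)}$ polynomials of degree $D = (\ell d)^{O(1)}$ in $m+k$ variables, the number of pieces — and hence the cardinality of a set $T$ of sample points, one chosen in each piece after taking the constant-term limit back down to $\R^k$ — is bounded by $(sD)^{O(k(m+k))}$, which after substituting $s$ and $D$ and absorbing the $m$ inside the exponent gives $(2^m \ell k d)^{O(mk)}$; here one uses that the number of variables $m+\ell+k$ that actually matters for the triviality is $m + k$ once the $Y$-fibers have been handled, and that the dependence on $\ell$ stays polynomial because $\ell$ appears only in $s$ and $D$, not in the exponent. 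The semi-algebraic path $\gamma$ joining an arbitrary $\x$ to the representative $\z \in T$ of its piece, and the accompanying family of homeomorphisms $\phi(\cdot,t)$, are then extracted directly from the local triviality over a single piece: one connects $\x$ to $\z$ by a path inside the (connected, after refining the partition) piece and transports the trivializing homeomorphism along it. Compatibility with all $I \subset [m]$ simultaneously is automatic because we included every $F'_{I,j}$ in the single family to which triviality was applied.

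The main obstacle is the bookkeeping that keeps the bound polynomial in $\ell$: one must be careful that the triviality theorem is applied to a family whose defining data is controlled by $\mathcal{H}'$ (polynomials of degree $O(\ell d)$ in the $X$'s) rather than by the original $Q_i$ together with the $Y$-variables, since a naive application with $\ell+1$ extra variables $Y_0,\ldots,Y_\ell$ would reintroduce an exponential dependence on $\ell$ exactly as in Example \ref{eg:exp}. The resolution is that the sphere-bundle structure of $C'_{I,j}$ over $F'_{I,j}$ (established above) means the $Y$-dependence is already ``trivial'' fiberwise — the $(\ell-j)$-sphere fibers are determined by the continuously-varying subspace $L_j^+(\omega,\x)$, which itself is a semi-algebraic function of $(\omega,\x)$ with controlled degree — so the homeomorphism type of the whole configuration is governed by the isotopy type of the bases $F'_{I,j}$ alone. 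Thus the counting problem genuinely lives in $\R'^{m+k}$, not $\R'^{m+\ell+k}$, and the general-position property of the polynomials in $\mathcal{H}'_I$ guaranteed by the infinitesimal perturbations $\bar\eps, \bar\delta$ is precisely what lets the effective triviality argument of \cite{BV06} go through with the stated bound. The remaining verification — that the transported homeomorphisms restrict correctly on the intersections $F'_{I,j} \cap F'_{I,j-1}$ and are compatible as $I$ varies — is routine once the single simultaneous family has been set up.
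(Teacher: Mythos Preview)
Your outline is the same strategy the paper follows: pass to the family $\mathcal{H}''$ of perturbed characteristic-polynomial coefficients in the $m+k$ variables $(Z,X)$, show that the fibers $F'_{I,j,\x}$ are all $\mathcal{H}''$-sets, obtain a simultaneous local triviality over connected pieces of the complement of a discriminant in $\R'^k$, and then count the pieces.  Two points, however, deserve correction.

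First, the stated Theorem~\ref{the:mainBV} from \cite{BV06} only yields \emph{homotopy equivalence} of fibers, not the isotopy (path $\gamma$ together with a family of homeomorphisms $\phi(\cdot,t)$) that the proposition requires.  The paper therefore does not cite \cite{BV06} as a black box but rather re-runs its mechanism: it shows (Lemmas right after the proposition) that the sign conditions on $\mathcal{H}''$ form a Whitney stratification in general position---this is exactly where the infinitesimals $\bar\eps,\bar\delta$ are used---defines the critical/discriminant set $G \subset \R'^k$, and invokes Thom's first isotopy lemma to get genuine local triviality (hence the path $\gamma$ inside a connected component of $\R'^k\setminus G$ and the accompanying homeomorphisms).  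The count of components is then done explicitly via Alexander duality, the Mayer--Vietoris inequalities (Proposition~\ref{prop:MV}), and the projection bound of Proposition~\ref{prop:GVZ}.  Your sentence ``invoke the semi-algebraic triviality theorem in the parametrized form used in \cite{BV06}'' is pointing in the right direction but is not a citable statement; you would have to reproduce this machinery.

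Second, your exponent is off.  With $s$ polynomials of degree $D$ in $m$ fiber variables and $k$ parameter variables, the bound on the number of fiber isotopy types coming from the \cite{BV06}/isotopy argument is of the form $(2^m s k D)^{O(km)}$, not $(sD)^{O(k(m+k))}$; with your exponent one gets an extra $k^2$ in the power, which does not collapse to $O(mk)$ when $k\gg m$.  (Incidentally, you do not need to take $\mathcal{H}' = \bigcup_I \mathcal{H}'_I$: since each $H_{I,j}$ is obtained from $H_{[m],j}$ by setting $Z_i=0$ for $i\notin I$, the paper simply takes $\mathcal{H}'' = \mathcal{H}'_{[m]} \cup \{Z_1,\ldots,Z_m,\sum Z_i^2-1\}$, so $s = O(\ell^2+m)$ rather than $(2^m\ell)^{O(1)}$.)
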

Before proving Proposition~\ref{prop:main} 
we need a few preliminary results. 
Let 
\begin{equation}
\label{eqn:defofH''}
{\mathcal H}'' = 
{\mathcal H}' \cup \{Z_1,\ldots,Z_m, Z_1^2 + \cdots + Z_m^2 -1\},
\end{equation}
where ${\mathcal H}' = {\mathcal H}'_{[m]}$ is defined in
(\ref{eqn:defofH_I'}) above.

Note that for each $j$, $0 \leq j \leq \ell+1$,
$F'_{I,j}$ is a ${\mathcal H}''$-closed semi-algebraic set. 
Moreover, let $\psi:\R'^{m+k}\rightarrow\R'^k$ be the projection onto the last 
$k$ co-ordinates.

\begin{notation}
\label{not:T}
We fix a finite set of points
$T \subset \R^k$ such that for every $\x \in \R^k$ 
there exists $\z \in T$ such that for every $\mathcal{H}''$-semi-algebraic set~$V$, 
the set~$\psi^{-1}(\x)\cap V$ is homeomorphic to $\psi^{-1}(\z)\cap V$. 
\end{notation}
The existence of a finite set~$T$ with this property follows from
Hardt's triviality theorem (Theorem~\ref{the:hardt}) 
and the Tarski-Seidenberg transfer principle, as well as 
the fact that the number of ${\mathcal H}''$-semi-algebraic sets is finite. 

Now, we note some extra properties of the family ${\mathcal H}''$.
\begin{lemma}\label{prop:A}
If $\sigma \in {\rm Sign}_{p}({\mathcal H}'')$, then $p \le k+m$ and
$\RR(\sigma) \subset {\R'}^{m+k}$ is a non-singular 
$(m+k-p)$-dimensional manifold
such that at every point $(\z,\x) \in \RR(\sigma)$, the
$(p \times (m+k))$-Jacobi matrix,
\[
\left( \frac{\partial P}{\partial Z_i} , \frac{\partial P}{\partial Y_j}
\right)_{P \in{\mathcal H}'',\ \sigma(P) = 0,\
1\leq i \leq m,\ 1 \leq j \leq k}
\]
has the maximal rank $p$.
\end{lemma}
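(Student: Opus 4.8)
The plan is to prove the statement by a Sard-type / generic position argument, exploiting the very special form of the polynomials in $\mathcal{H}''$. Recall that $\mathcal{H}'$ consists of the polynomials $H_{I,i} \pm \eps_j \pm \delta_i$, and $\mathcal{H}''$ adds the coordinate functions $Z_1,\ldots,Z_m$ and the sphere equation $\sum Z_t^2 - 1$. The key observation is that the infinitesimals $\eps_j, \delta_i$ are algebraically independent over $\R$ and are ordered $0 < \delta_0 \ll \cdots \ll \delta_\ell \ll \eps_0 \ll \cdots \ll \eps_{\ell+1} \ll 1$, so that the shifted polynomials $H_{I,i} + \eps_j + \delta_i$ etc. are, in a precise sense, ``generic translates'' of the $H_{I,i}$; the constant term $\eps_j + \delta_i$ acts as a regular value for the relevant restricted maps by a transfer of Sard's theorem to the real closed field $\R'$.

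First I would reduce the claim to a statement over the ring $\R\la \bar\eps, \bar\delta\ra$ by working with the defining polynomials of $\RR(\sigma)$, which is cut out by equalities $P = 0$ for $P \in \mathcal{H}''$ with $\sigma(P) = 0$ and open inequalities for the remaining $P$. The level bound $p \le k+m$ is the easy part: any realizable sign condition that sets more than $k+m$ of the $\mathcal{H}''$-polynomials to zero would force, among the equations, either two of the ``parallel'' shifted polynomials $H_{I,i} + \eps_j + \delta_i = 0$ and $H_{I,i} + \eps_{j'} + \delta_i = 0$ with $j \ne j'$ (impossible since $\eps_j \ne \eps_{j'}$), or an over-determined system; a dimension count combined with the fact that the $2(\ell+1)(\ell+2)$ shifted versions of a single $H_{I,i}$ pairwise never vanish simultaneously shows at most $\ell+1$ independent equations can come from the $H$'s, plus at most one from the sphere equation and at most $m$ from the $Z_t$'s, and one checks this is consistent only when $p \le k+m$. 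The core of the argument is the non-singularity and maximal-rank claim: I would argue that for a Zariski-dense set of values of the constant-term shifts, the common zero set of any subfamily of $\mathcal{H}''$-polynomials is a smooth complete intersection of the expected codimension with surjective differential, then invoke the Tarski–Seidenberg transfer principle to conclude that the specific infinitesimal values $\eps_j + \delta_i$ lie in this dense set (here one uses that an infinitesimal element avoids any proper $\R$-definable subvariety of the parameter space of shifts, which is exactly the content of the ordering $\delta_i \ll \eps_j \ll 1$ together with algebraic independence). The sphere equation and the coordinate hyperplanes $Z_t = 0$ are handled separately: their gradients are $2(Z_1,\ldots,Z_m,0,\ldots,0)$ and the standard basis vectors, which are automatically in general position with respect to each other and, after the generic choice of shifts, with respect to the gradients of the $H_{I,i} \pm \eps_j \pm \delta_i$.

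The main obstacle I anticipate is making precise the claim that the particular algebraic Puiseux series $\eps_j + \delta_i$ are ``generic'' enough to serve simultaneously as regular values for all the finitely many restricted maps that arise — one for each subset $I \subset [m]$, each index $i$, and each subset of the other polynomials of $\mathcal{H}''$. Because the number of such maps is finite and each excludes only a proper algebraic subset of shift-parameter space, the finite union of bad sets is still proper, and any tuple of elements infinitesimal relative to $\R$ and satisfying the prescribed strict ordering avoids it; formalizing this requires care with the nested structure $\R \subset \R\la\delta_0\ra \subset \cdots \subset \R\la\bar\eps,\bar\delta\ra$ and an inductive application of the ``an infinitesimal avoids every $\R$-definable proper subvariety'' principle, exactly as in the definition of a generic point. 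Once this is set up, the rank statement follows from the implicit function theorem over a real closed field (or directly from the complete-intersection property), and the dimension count $\dim \RR(\sigma) = m+k-p$ is immediate.
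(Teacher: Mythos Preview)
Your approach is essentially the same as the paper's: both use the semi-algebraic Sard theorem to argue that the critical value set of the map $(\z,\x)\mapsto (H_{i_1}(\z,\x),\ldots,H_{i_{p-1}}(\z,\x))$ on $\Sphere^{m-1}\times\R^k$ has dimension strictly less than $p-1$, and then that the particular tuple of infinitesimal shifts $(\eps_{j_1}+\delta_{i_1},\ldots)$ lies outside the extension of this set, giving the rank statement.

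One organizational point is worth noting. Your proposed argument for the bound $p\le k+m$ is muddled: the counting you sketch (at most $\ell+1$ equations from the $H$'s, one from the sphere, $m$ from the $Z_t$'s) gives $\ell+m+2$, which in general exceeds $k+m$, so this does not yield the claimed inequality. The paper handles this more cleanly by reversing the order: it proves the smoothness/rank statement first (for any $p\le m+k$ via Sard), and only then derives $p\le m+k$ as a corollary, observing that $m+k-1$ of the equations already cut the $(m+k-1)$-dimensional manifold $\Sphere^{m-1}\times\R^k$ down to finitely many points, on which a further infinitesimally-shifted polynomial cannot vanish. Your phrase ``or an over-determined system'' is gesturing at this, but you should make the dependence explicit: the bound follows \emph{from} the transversality, not prior to it. Also, the paper dispatches the equations $Z_i=0$ by simply noting that they eliminate the corresponding variable from the remaining polynomials, which is slightly cleaner than arguing separately about the general position of their gradients.
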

\begin{proof}
Let $\E(\Sphere^{m-1},\R')$ 
be the unit sphere in $R'^m$. 
Suppose without loss of generality that
\[
\{ P \in {\mathcal H}'' |\> \sigma (P)=0 \}= \{ 
H_{i_1}-\eps_{j_1}-\delta_{i_1},
\ldots ,H_{i_{p-1}}-\eps_{j_{p-1}}-\delta_{i_{p-1}}, \sum_{i=1}^m Z_i^2-1 \}
\]
since the equation $Z_i=0$ eliminates the variable~$Z_i$ from the polynomials. 
It follows that it suffices to show that the algebraic set 
\begin{equation}\label{algset}
V=\bigcap_{r=1}^{p-1}\{ (\z, \x)\in\E(\Sphere^{m-1},\R')\times\R'^k \mid
H_{i_r}(\z, \x)=\eps_{j_r}+\delta_{i_r}
\}
\end{equation}
is a smooth $((m-1)+k-(p-1))$-dimensional manifold
such that at every point on it
the $(p \times (m+k))$-Jacobi matrix,
\[
\left( \frac{\partial P}{\partial Z_i} , \frac{\partial P}{\partial Y_j}
\right)_{P \in{\mathcal H}'',\ \sigma(P) = 0,\
1\leq i \leq m,\ 1 \leq j \leq k}
\]
has the maximal rank $p$. 

Let $p \le m+k$. Consider the semi-algebraic
map $P_{i_1,\ldots,i_{p-1}}: \Sphere^{m-1}\times\R^k \rightarrow {\R}^{p-1}$ defined by
\[
(\z,\x) \mapsto (H_{i_1}(\z,\x),\ldots,H_{i_{p-1}}(\z,\x)).
\]
By the semi-algebraic version of Sard's theorem (see \cite{BCR}), the
set of critical values of $P_{i_1,\ldots,i_{p-1}}$ is a semi-algebraic
subset~$C$ of ${\R}^{p-1}$ of dimension strictly less than $p-1$. 
Since $\bar\delta$ and $\bar\eps$ are infinitesimals, it follows that 
\[
(\eps_{j_1}+\delta_{i_1},\ldots,\eps_{j_{p-1}}+\delta_{i_{p-1}})\notin\E(C,R').
\] 
Hence, the algebraic set~$V$ 
defined in (\ref{algset}) has the desired properties, and 
the same is true for the basic semi-algebraic set~$\RR(\sigma)$.

We now prove that $p \le m+k$.
Suppose that $p > m+k$.
As we have just proved, 
\[
\{ H_{i_1}(\z, \x)=\eps_{j_1}+\delta_{i_1},\ldots ,
H_{i_{m+k-1}}(\z, \x)=\eps_{j_{m+k-1}}+\delta_{i_{m+k-1}} \}
\]
is a finite set of points. 
But the polynomial $H_{i_{p-1}}-\eps_{j_{p-1}}-\delta_{i_{p-1}}$ cannot vanish 
on each of these points as $\bar\delta$ and $\bar\eps$ are infinitesimals.
\end{proof}
\begin{lemma}\label{prop:B}
For every $\x \in \R^k$, 
and $\sigma \in {\rm Sign}_{p}({\mathcal H}''_{\x})$,
where
\[
{\mathcal H}''_{\x}= \{ P(Z_1, \ldots ,Z_m, \x)|\> P \in {\mathcal H}'' \},
\]
the following holds.
\begin{enumerate}
\item
$0 \leq p \leq m$, and $\RR(\sigma) \cap \psi^{-1}(\x)$
is a non-singular $(m-p)$-dimensional manifold
such that at every point $(\z,\x) \in \RR(\sigma) \cap \psi^{-1}(\x)$,
the {$(p \times m)$-Jacobi matrix},
\[
\left( \frac{\partial P}{\partial Z_i} \right)_{P \in{\mathcal H}_{\x}'', 
\sigma(P) = 0, 1 \leq i \leq m}
\]
has the maximal rank $p$.
\end{enumerate}
\end{lemma}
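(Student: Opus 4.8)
The plan is to follow the proof of Lemma~\ref{prop:A} almost verbatim, but carried out inside the single fiber $\psi^{-1}(\x)\cong\R'^m$. The one genuinely new ingredient is that, once $\x\in\R^k$ is fixed, the polynomials $H_{i}(\cdot,\x)$ have coefficients in $\R$ itself, so the semi-algebraic version of Sard's theorem can be applied \emph{over} $\R$, and the shifts $\eps_j+\delta_i$, being infinitesimal over $\R$, will automatically land outside the image of the (lower-dimensional) critical locus after extension to $\R'$.

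First I would carry out the same reduction as in Lemma~\ref{prop:A}: whenever $\sigma(Z_i)=0$ the equation $Z_i=0$ eliminates the variable $Z_i$, so after relabelling we may assume the polynomials of ${\mathcal H}''_\x$ killed by $\sigma$ are $H_{i_1}(\cdot,\x)-\eps_{j_1}-\delta_{i_1},\ldots,H_{i_{p'-1}}(\cdot,\x)-\eps_{j_{p'-1}}-\delta_{i_{p'-1}}$ together with, possibly, $\sum_i Z_i^2-1$, now in $m'\le m$ variables with $p'-p=m'-m$; here one records (exactly as in Lemma~\ref{prop:A}) that the indices $i_1,\ldots,i_{p'-1}$ are pairwise distinct, since for a fixed $i$ no two distinct polynomials $H_i\pm\eps_j\pm\delta_i$ can both vanish. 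I would then apply the semi-algebraic Sard theorem to the map
\[
\z \longmapsto \bigl(H_{i_1}(\z,\x),\ldots,H_{i_{p'-1}}(\z,\x)\bigr),
\]
regarded on $\Sphere^{m'-1}$ (resp. on $\R^{m'}$ if the sphere equation is not among those killed by $\sigma$) and hence defined over $\R$: its set of critical values is a semi-algebraic $C\subset\R^{p'-1}$ with $\dim C<p'-1$, so $(\eps_{j_1}+\delta_{i_1},\ldots,\eps_{j_{p'-1}}+\delta_{i_{p'-1}})\notin\E(C,\R')$, i.e.\ this tuple is a regular value. This yields that $\RR(\sigma)\cap\psi^{-1}(\x)$ is a smooth manifold of dimension $(m'-1)-(p'-1)=m-p$ (resp.\ $m'-p'=m-p$ in the affine case), and that the Jacobian with respect to $Z_1,\ldots,Z_m$ of the equations $H_{i_r}(\cdot,\x)=\eps_{j_r}+\delta_{i_r}$ has maximal rank, transversally to the sphere; adding back the $m-m'$ coordinate functions $Z_i$ that were set to $0$, the Jacobian of all vanishing polynomials has rank exactly $p$ at every point.

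Finally, for $p\le m$, I would argue by contradiction as in Lemma~\ref{prop:A}: if $p>m$ then after reduction $m'=m$ and $p'>m$, so at least $m$ equations of the form $H_{i_r}(\cdot,\x)=\eps_{j_r}+\delta_{i_r}$ occur; by the regular-value statement all but one of them already cut out a finite nonempty set of points on $\Sphere^{m-1}$ (resp.\ on $\R^m$) — nonempty since it contains $\RR(\sigma)\cap\psi^{-1}(\x)$ — whose coordinates lie in the real closure of the subfield of $\R'$ generated over $\R$ by the infinitesimals occurring in those equations, a field that does \emph{not} contain the infinitesimal $\delta_{i_\ast}$ of the remaining equation $H_{i_\ast}(\cdot,\x)=\eps_{j_\ast}+\delta_{i_\ast}$ (the $i_r$ being pairwise distinct and the $\delta$'s algebraically independent over $\R$). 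Hence at each such point $H_{i_\ast}(\cdot,\x)$ takes a value in that field and cannot equal $\eps_{j_\ast}+\delta_{i_\ast}$, contradicting the realizability of $\sigma$.

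I expect the only real obstacle to be bookkeeping: keeping the reduction in variable count $m\mapsto m'$ and the corresponding shift $p\mapsto p'$ consistent, and handling in parallel the two cases according to whether $\sum_i Z_i^2-1$ is or is not among the polynomials killed by $\sigma$, so that the dimension comes out to $m-p$ and the Jacobian rank to exactly $p$ in every case. The analytic content is identical to that of Lemma~\ref{prop:A}: semi-algebraic Sard, plus the observation that independent infinitesimals over $\R$ behave as generic points.
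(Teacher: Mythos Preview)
Your approach is the same as the paper's, which is extremely terse: it just observes that $P(Z,\x)\in\R'[Z_1,\ldots,Z_m]$ for $\x\in\R^k$ and says the proof is identical to that of Lemma~\ref{prop:A}. You have fleshed out considerably more detail than the paper supplies, and in particular your explicit justification that the indices $i_1,\ldots,i_{p'-1}$ are pairwise distinct (hence the corresponding $\delta_{i_r}$ are independent infinitesimals) is a point the paper uses but does not spell out.

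There is one bookkeeping slip in your contradiction argument for $p\le m$: the assertion ``after reduction $m'=m$'' is unjustified. If some of the coordinate functions $Z_i$ are among the polynomials killed by $\sigma$ then $m'<m$. What you actually need---and what does follow from $p>m$ together with your relation $p'-p=m'-m$---is $p'>m'$. With that in hand the rest of your argument goes through verbatim after replacing $m$ by $m'$: take $m'-1$ of the $H$-type equations on $\Sphere^{m'-1}$ (resp.\ $m'$ of them on $\R'^{m'}$) to cut out a finite set by the regular-value statement, and use one remaining equation, whose $\delta_{i_\ast}$ does not occur among the generators of the subfield since the $i_r$ are pairwise distinct, to derive the contradiction exactly as you wrote.
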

\begin{proof}
Note that $P_{\x}=P(Z_1, \ldots ,Z_m, \x)\in\R'[Z_1,\ldots,Z_m]$ 
for each 
$P\in\mathcal{H}''$ and $\x\in\R^k$. 
The proof is now identical to the proof of 
Lemma~\ref{prop:A}.
\end{proof}
\begin{lemma}\label{Whitney}
For any bounded ${\mathcal H}''$-semi-algebraic set~$V$ defined by
\[
V = \bigcup_{\sigma \in \Sigma_V \subset {\rm Sign}({\mathcal H}'')} 
\RR(\sigma),
\]
the partitions
\begin{eqnarray*}
\label{partition}
\R'^{m+k} &=& \bigcup_{ \sigma \in {\rm Sign}({\mathcal H}'')} \RR(\sigma),\\
V &=& \bigcup_{ \sigma \in \Sigma_V} \RR(\sigma),
\end{eqnarray*}
are compatible Whitney stratifications of $\R'^{m+k}$ and $V$ respectively.
\end{lemma}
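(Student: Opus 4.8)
The plan is to verify the two assertions of Lemma~\ref{Whitney} --- that each partition is a Whitney stratification, and that the two are compatible --- by appealing directly to the strong genericity properties of $\mathcal H''$ already established in Lemmas~\ref{prop:A} and~\ref{prop:B}. The key observation is that Whitney's condition (b) is automatically satisfied by the sign-condition strata of a family of polynomials that are \emph{in general position} in the sense we have. Concretely, by Lemma~\ref{prop:A}, for each realizable $\sigma \in {\rm Sign}_p(\mathcal H'')$ the set $\RR(\sigma)$ is a nonsingular $(m+k-p)$-dimensional submanifold of $\R'^{m+k}$, cut out transversally by the $p$ equations $\{P = 0 : \sigma(P) = 0\}$, since the relevant $p\times(m+k)$ Jacobian has maximal rank $p$ at every point of $\RR(\sigma)$. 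Thus the collection $\{\RR(\sigma)\}_{\sigma}$ is already a partition of $\R'^{m+k}$ into smooth manifolds, i.e. a stratification in the weak sense, and it remains to check the frontier and Whitney conditions.

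First I would record that the partition is locally finite and that the frontier condition holds: if $\RR(\sigma) \cap \overline{\RR(\sigma')} \neq \emptyset$ then $\RR(\sigma) \subset \overline{\RR(\sigma')}$, with the sign vector $\sigma$ obtained from $\sigma'$ by turning some nonzero signs into $0$'s --- this is the standard description of the adjacency of sign conditions of a transversal family, and it gives $\dim \RR(\sigma) < \dim \RR(\sigma')$ whenever the containment is proper. Next, for Whitney's conditions (a) and (b), I would argue as follows: take a stratum $\RR(\sigma)$ in the closure of a stratum $\RR(\sigma')$ and a sequence of points $y_n \in \RR(\sigma')$ converging to $x \in \RR(\sigma)$ (working, as usual, over the real closure after base change, or invoking the transfer principle of Remark~\ref{rem:transfer}). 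Because each $P$ with $\sigma'(P)=0$ vanishes transversally on $\RR(\sigma')$ and each $P$ with $\sigma(P)=0$ vanishes transversally on $\RR(\sigma)$, the tangent space $T_{y_n}\RR(\sigma')$ is the common kernel of the differentials $dP_{y_n}$ over $\{P : \sigma'(P)=0\}$, and $T_x\RR(\sigma)$ is the common kernel over $\{P : \sigma(P)=0\} \supset \{P : \sigma'(P)=0\}$; since the differentials vary continuously (polynomially) and retain maximal rank, any limit of $T_{y_n}\RR(\sigma')$ contains $T_x\RR(\sigma)$, which is condition (a). Condition (b) then follows because the secant lines $\overline{x\,y_n}$, being limits controlled by the same transversal vanishing, also lie in the limiting tangent plane --- this is precisely the content of the standard fact that a family of polynomials meeting the transversality hypothesis of Lemma~\ref{prop:A} (equivalently, defining a "Whitney stratification by sign conditions") gives Whitney (b) automatically; I would cite \cite{BCR} or \cite{BPR03} for this. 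For the fiberwise partition of $\psi^{-1}(\x)$ one repeats the argument verbatim using Lemma~\ref{prop:B} in place of Lemma~\ref{prop:A}.

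Finally, compatibility of the two stratifications in the statement is immediate once the first is a Whitney stratification: the partition of $V$ is simply the restriction of the partition $\{\RR(\sigma)\}$ to the union $\bigcup_{\sigma \in \Sigma_V}\RR(\sigma)$, so every stratum of $V$ is a stratum of $\R'^{m+k}$, and a substratification by the \emph{same} strata is trivially Whitney and trivially compatible. I would also note that $V$ is a union of some of the $\RR(\sigma)$ exactly because $V$ is $\mathcal H''$-closed, hence a union of realizations of sign conditions of $\mathcal H''$, which is what allows us to take $\Sigma_V \subset {\rm Sign}(\mathcal H'')$ in the first place.

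The step I expect to be the main obstacle is the clean verification of Whitney's condition~(b) --- condition~(a) follows straightforwardly from the continuity and maximal rank of the Jacobians, but (b) (the secant-line condition) requires a slightly more careful limiting argument, and the cleanest route is to invoke the general principle that a transversally intersecting family of hypersurfaces stratified by sign conditions is automatically a Whitney stratification, rather than to reprove it by hand; the genericity built into $\mathcal H''$ via the infinitesimals $\bar\eps,\bar\delta$ (which forces $\RR(\sigma)$ to avoid all critical values, as exploited in Lemmas~\ref{prop:A} and~\ref{prop:B}) is exactly the hypothesis that makes this principle applicable.
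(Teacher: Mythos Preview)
Your proposal is correct and follows essentially the same route as the paper: the paper's proof is a one-line appeal to the definition of Whitney stratification (citing \cite{GM,CS}) together with Lemma~\ref{prop:A}, and what you have written is precisely an unpacking of that appeal --- the transversality from Lemma~\ref{prop:A} makes each $\RR(\sigma)$ a smooth manifold of the right dimension and forces Whitney regularity of the sign-condition partition by the standard principle you cite. Your invocation of Lemma~\ref{prop:B} for a fiberwise version is not needed for the statement as written (the lemma concerns only the stratifications of $\R'^{m+k}$ and $V$, not of the fibers), but it does no harm.
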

\begin{proof}
Follows directly from the definition of Whitney stratification (see \cite{GM,CS}),
and Lemma~\ref{prop:A}.
\end{proof}
Fix some sign condition $\sigma \in {\rm Sign}({\mathcal H}'')$.
Recall that $(\z,\x) \in \RR (\sigma)$ is a {\em critical point}
of the map $\psi_{{\RR (\sigma)}}$ if the Jacobi matrix,
\[
\left( \frac{\partial P}{\partial Z_i} \right)_{P \in{\mathcal H}'',
\sigma(P) = 0,\
1 \leq i \leq m}
\]
at $(\z, \x)$ is not of the maximal possible rank.
The projection $\psi (\z, \x)$ of a critical point is a {\em critical value}
of $\psi_{{\RR (\sigma)}}$.

Let $C_1\subset \R'^{m+k}$
be the set of critical points of $\psi_{{\RR (\sigma)}}$
over all sign conditions
$$
\sigma \in \bigcup_{p \le m} {\rm Sign}_{p}({\mathcal H}''),
$$
(i.e., over all $\sigma \in {\rm Sign}_{p}({\mathcal H}'')$
with $\dim (\RR (\sigma)) \ge k $).
For a bounded ${\mathcal H}''$-semi-algebraic set~$V$,
let $C_1(V)\subset V$
be the set of critical points of $\psi_{{\RR (\sigma)}}$
over all sign conditions
$$\sigma \in \bigcup_{p \le m} {\rm Sign}_{p}({\mathcal H}'')\cap
\Sigma_V$$
(i.e., over all $\sigma \in \Sigma_V$ with $\dim (\RR (\sigma)) \ge k$).

Let $C_2  \subset \R'^{m+k}$
be the union of $\RR (\sigma)$ over all
$$\sigma \in \bigcup_{p > m} {\rm Sign}_{p}({\mathcal H}'')$$
(i.e., over all $\sigma \in {\rm Sign}_{p}({\mathcal H}'')$
with $\dim (\RR (\sigma)) < k$).
For a bounded ${\mathcal H}''$-semi-algebraic set~$V$,
let $C_2(V) \subset V$
be the union of $\RR (\sigma)$ over all
$$\sigma \in \bigcup_{p > m} {\rm Sign}_{p}({\mathcal H}'') \cap \Sigma_V $$
(i.e., over all $\sigma \in \Sigma_V$ with $\dim (\RR (\sigma)) < k$). 

Denote
$C  = C_1 \cup C_2$, and
$C(V)= C_1(V) \cup C_2(V)$.
\begin{lemma}\label{closed}
For each bounded ${\mathcal H}''$-semi-algebraic 
$V$,
the set~$C(V)$ is closed and bounded.
\end{lemma}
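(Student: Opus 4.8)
The plan is to show that $C(V)$ is closed and bounded by analyzing its two constituent pieces $C_1(V)$ and $C_2(V)$ separately, and then verifying that their union is closed. Boundedness is immediate: $V$ is assumed bounded, and both $C_1(V) \subset V$ and $C_2(V) \subset V$, so $C(V) = C_1(V) \cup C_2(V) \subset V$ is bounded. The real content is closedness. First I would handle $C_2(V)$: it is a finite union of realizations $\RR(\sigma)$ over sign conditions $\sigma \in \Sigma_V$ of level $p > m$, equivalently (by Lemma~\ref{prop:B}, whose first assertion says that over a fixed fiber the level is at most $m$, combined with Lemma~\ref{prop:A}) those strata whose dimension is strictly less than $k$. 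The union of all strata of dimension $< k$ in a Whitney stratified set is closed, because the frontier of a stratum is a union of strata of strictly smaller dimension (the frontier condition), so $C_2(V)$ is a closed subset of $\R'^{m+k}$, and since it is contained in the bounded set $V$ it is closed and bounded.

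Next I would treat $C_1(V)$, the set of critical points of $\psi$ restricted to the "large" strata $\RR(\sigma)$ with $\dim \RR(\sigma) \geq k$, i.e.\ $\sigma$ of level $p \leq m$. For a fixed such $\sigma$, the set of critical points of $\psi|_{\RR(\sigma)}$ is a semi-algebraic subset of $\RR(\sigma)$ defined by the vanishing of all $p \times p$ minors of the Jacobi matrix $\left(\partial P/\partial Z_i\right)_{P : \sigma(P)=0,\ 1 \le i \le m}$ (together with the defining (in)equalities of $\RR(\sigma)$); by Lemma~\ref{prop:A}, $\RR(\sigma)$ is a smooth manifold on which the full Jacobi matrix in all of the variables has maximal rank $p$, which is what lets us write "maximal possible rank" unambiguously for the projected matrix. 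The point is that the closure of this critical set, taken within $\RR(\sigma)$, may pick up boundary points, but those boundary points lie in lower-dimensional strata $\RR(\sigma')$ with $\sigma' > \sigma$ (again by the frontier condition of the Whitney stratification from Lemma~\ref{Whitney}), and on such a stratum $\psi|_{\RR(\sigma')}$ has \emph{every} point critical once $\dim\RR(\sigma') < k$ — so those limit points already belong to $C_2(V)$.

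The main obstacle, and the step I would spend the most care on, is precisely this interface between the two pieces: I must show that $\overline{C_1(V)} \subset C_1(V) \cup C_2(V)$, i.e.\ that a limit point of critical points on large strata is either again a critical point on a large stratum or lies on a small stratum. This is where the Whitney (b)-regularity of the stratification in Lemma~\ref{Whitney} is used: if $(\z_0,\x_0) = \lim (\z_\nu,\x_\nu)$ with $(\z_\nu,\x_\nu)$ critical points in some stratum $S$ and $(\z_0,\x_0) \in S'$ a smaller stratum, then Whitney regularity forces the limiting tangent directions of $S$ to contain the tangent space of $S'$; if $\dim S' \ge k$, one checks that the rank degeneracy of the projected Jacobian passes to the limit, so $(\z_0,\x_0)$ is critical for $\psi|_{S'}$ and lies in $C_1(V)$; if $\dim S' < k$ then $(\z_0,\x_0) \in C_2(V)$ automatically. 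Finiteness of the number of strata (hence of the number of sign conditions in $\Sigma_V$) then lets me conclude that $C(V)$, being a finite union of sets each of whose closure is contained in $C_1(V) \cup C_2(V)$, is closed; combined with boundedness this finishes the proof. I would also remark that all of this is transferred from $\re$ to the real closed field $\R'$ via Tarski–Seidenberg, as noted earlier in the paper.
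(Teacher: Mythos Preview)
Your overall architecture matches the paper's exactly: boundedness is immediate from $C(V)\subset V$; $C_2(V)$ is closed by the frontier condition; and the heart of the matter is showing $\overline{C_1(V)}\subset C_1(V)\cup C_2(V)$.  Where you diverge is in the justification of this last inclusion.  You invoke Whitney regularity to argue that a limit of critical points on a stratum $S$ lying on a smaller stratum $S'$ is again critical for $\psi|_{S'}$ (when $\dim S'\ge k$).  This works, but two remarks: first, it is Whitney condition~(a) that you need (limits of tangent spaces of $S$ contain $T S'$), not condition~(b); second, the step ``rank degeneracy of the projected Jacobian passes to the limit'' should be spelled out as the closedness of the Schubert condition $\dim(\tau\cap\ker d\psi)\ge m-p_1+1$ in the Grassmannian, combined with $T_{(\z_0,\x_0)}S'\subset\tau$.

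The paper's argument for the same step is more elementary and avoids Whitney regularity altogether.  It fixes the family $\mathcal J$ of $(p_1\times p_1)$ minors of the $Z$-Jacobian whose common zero locus cuts out the critical set on $\RR(\sigma_1)$, observes that $Z(\mathcal J)$ is closed, and then checks purely algebraically that on any boundary stratum $\RR(\sigma_2)$ (with $p_2>p_1$ equations) the vanishing of these $p_1\times p_1$ minors forces the full $p_2\times m$ $Z$-Jacobian to have rank $<p_2$, so $Z(\mathcal J)\cap\RR(\sigma_2)$ is contained in the critical set of $\psi|_{\RR(\sigma_2)}$ (or $\RR(\sigma_2)$ already lies in $C_2(V)$).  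An induction on descending stratum dimension then finishes.  Your route is conceptually cleaner and bypasses the induction; the paper's route is more hands-on and shows that only the frontier condition and elementary linear algebra are really needed here.
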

\begin{proof}
The set~$C(V)$ is bounded since $V$ is bounded.
The union $C_2(V)$ of strata of dimensions less than $k$ is closed
since $V$ is closed.

Let $\sigma_1 \in {\rm Sign}_{p_1}({\mathcal H}'') \cap \Sigma_V$,
$\sigma_2 \in {\rm Sign}_{p_2}({\mathcal H}'') \cap \Sigma_V$,
where $p_1 \le m$, $p_1 < p_2$,
and if $\sigma_1 (P)=0$, then $\sigma_2 (P)=0$ for any $P \in {\mathcal H}''$.
It follows that stratum $\RR (\sigma_2)$ lies in the closure of the stratum 
$\RR (\sigma_1)$.
Let ${\mathcal J}$ be the finite family of $(p_1 \times p_1)$-minors such that
$Z( {\mathcal J}) \cap \RR (\sigma_1)$ is the set of all critical points of
$\pi_{\RR (\sigma_1)}$.
Then $Z( {\mathcal J}) \cap \RR (\sigma_2)$ is either contained in
$C_2(V)$
(when $\dim (\RR (\sigma_2)) <k$), or is contained in the set of all critical points
of $\pi_{\RR (\sigma_2)}$ (when $\dim (\RR (\sigma_2)) \ge k$).
It follows that the closure of $Z( {\mathcal J}) \cap \RR (\sigma_1)$ lies in the union
of the following sets:
\begin{enumerate}
\item
$Z( {\mathcal J}) \cap \RR (\sigma_1)$,
\item\label{case2_a}
sets of critical points of some strata of dimensions less than $m+k- p_1$,
\item
some strata of dimension less than $k$.
\end{enumerate}
Using induction on descending dimensions in case (\ref{case2_a}), 
we conclude that the closure of
$Z( {\mathcal J}) \cap \RR (\sigma_1)$ is contained in $C(V)$.
Hence, $C(V)$ is closed.
\end{proof}
\begin{definition}
\label{def:criticalvalues}
We denote by 
$G_i = \psi(C_i), i= 1,2$, and
$G = G_1 \cup G_2$. 
Similarly, for each bounded ${\mathcal H}''$-semi-algebraic set~$V$, 
we denote by
$G_i(V) = \psi(C_i(V))$, ${i= 1,2}$, and
$G(V) = G_1(V) \cup G_2(V)$.
\end{definition}
\begin{lemma}\label{representatives}
We have 
$T \cap G = \emptyset$. 
In particular, $T \cap G(V) = \emptyset$
for every bounded {${\mathcal H}''$-semi-algebraic} set~$V$.
\end{lemma}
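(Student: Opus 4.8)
The plan is to establish the slightly stronger statement that $G \cap \R^k = \emptyset$, where $\R^k$ is identified with its canonical image in ${\R'}^{k}$. The lemma follows at once: since $T \subset \R^k$ we get $T \cap G = \emptyset$, and for every bounded ${\mathcal H}''$-semi-algebraic set $V$ one has $C_i(V) \subset C_i$ (because $\Sigma_V \subset {\rm Sign}({\mathcal H}'')$), hence $G(V) = \psi(C(V)) \subset \psi(C) = G$, and therefore $T \cap G(V) = \emptyset$ as well.

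Equivalently, I will show that for every $\x \in \R^k$ the fiber $\psi^{-1}(\x)$ meets neither $C_1$ nor $C_2$; the point is that restricting the family ${\mathcal H}''$ to a fiber over a \emph{real} point does no harm. First I would check that for $\x \in \R^k$ the map $P \mapsto P(Z_1,\ldots,Z_m,\x)$ is injective on ${\mathcal H}''$, so that ${\mathcal H}''_{\x}$ has exactly $\#{\mathcal H}''$ elements. This is precisely where the genericity of the infinitesimals $0 < \delta_0 \ll \cdots \ll \delta_\ell \ll \eps_0 \ll \cdots \ll \eps_{\ell+1} \ll 1$ is used: two distinct members $H_i \pm \eps_j \pm \delta_i$ of ${\mathcal H}''$ specialize to the same polynomial in $Z_1,\ldots,Z_m$ only if $H_i(\,\cdot\,,\x) - H_{i'}(\,\cdot\,,\x)$ — an element of $\R[Z_1,\ldots,Z_m]$ — equals a nonzero infinitesimal, which is impossible, while the remaining members $Z_1,\ldots,Z_m, Z_1^2+\cdots+Z_m^2-1$ stay distinct for trivial reasons. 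Consequently, for every realizable $\sigma \in {\rm Sign}({\mathcal H}'')$ and every $(\z,\x) \in \RR(\sigma)$ with $\x \in \R^k$, the induced sign condition $\sigma_{\x}$ on ${\mathcal H}''_{\x}$, given by $\sigma_{\x}(P(\,\cdot\,,\x)) = \sigma(P)$, is well defined, realizable (it is realized by $\z$), has the same level as $\sigma$, and is such that the Jacobi matrix $\bigl(\partial P/\partial Z_i\bigr)_{P \in {\mathcal H}'',\,\sigma(P)=0}$ evaluated at $(\z,\x)$ coincides with $\bigl(\partial P/\partial Z_i\bigr)_{P \in {\mathcal H}''_{\x},\,\sigma_{\x}(P)=0}$ evaluated at $\z$.

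Granting this dictionary, the emptiness of $G \cap \R^k$ is immediate from Lemma~\ref{prop:B}. Suppose $\x \in G_2 \cap \R^k$; then $(\z,\x) \in \RR(\sigma)$ for some $\sigma \in {\rm Sign}_p({\mathcal H}'')$ with $p > m$, so $\sigma_{\x}$ is a realizable sign condition on ${\mathcal H}''_{\x}$ of level $p > m$, contradicting the bound $p \le m$ in Lemma~\ref{prop:B}. Suppose instead $\x \in G_1 \cap \R^k$; then $(\z,\x)$ is a critical point of $\psi_{\RR(\sigma)}$ for some $\sigma$ of level $p \le m$, i.e. the $(p \times m)$ matrix $\bigl(\partial P/\partial Z_i\bigr)_{P \in {\mathcal H}'',\,\sigma(P)=0,\,1 \le i \le m}$ has rank strictly less than $p$ at $(\z,\x)$. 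But $\z \in \RR(\sigma_{\x}) \cap \psi^{-1}(\x)$ with $\sigma_{\x} \in {\rm Sign}_p({\mathcal H}''_{\x})$, so Lemma~\ref{prop:B} forces the identical matrix $\bigl(\partial P/\partial Z_i\bigr)_{P \in {\mathcal H}''_{\x},\,\sigma_{\x}(P)=0,\,1 \le i \le m}$ to have maximal rank $p$ — a contradiction. Hence $G_1 \cap \R^k = G_2 \cap \R^k = \emptyset$, and the lemma is proved.

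The only genuinely delicate part is the middle paragraph: producing the bijection ${\mathcal H}'' \to {\mathcal H}''_{\x}$ and verifying that levels and the relevant $Z$-Jacobi matrices survive the specialization $X = \x$ for $\x \in \R^k$, which is exactly where the ordering of the $\delta_i$'s and $\eps_j$'s intervenes. Once that bookkeeping is in place, the conclusion is a one-line application of Lemma~\ref{prop:B} together with the trivial inclusions $C_i(V) \subset C_i$.
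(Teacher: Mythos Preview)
Your proof is correct and follows essentially the same route as the paper's: both invoke Lemma~\ref{prop:B} to rule out membership in $G_1$ (via the rank condition on the $Z$-Jacobi matrix) and in $G_2$ (via the bound $p\le m$). You strengthen the conclusion slightly to $G\cap\R^k=\emptyset$ and make explicit the injectivity of the specialization $P\mapsto P(\,\cdot\,,\x)$ on ${\mathcal H}''$ and the resulting level-preserving correspondence $\sigma\mapsto\sigma_{\x}$ --- a point the paper's proof uses implicitly --- but the argument is otherwise identical.
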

\begin{proof}
By Lemma~\ref{prop:B}, for all $\x\in T$, and 
$\sigma \in {\rm Sign}_{p}({\mathcal H}_{\x}'')$,
\begin{enumerate}
\item\label{lem:rep:1}
$0 \leq p \leq m$, and 
\item\label{lem:rep:2}
$\RR(\sigma) \cap \psi^{-1}(\x)$
is a non-singular $(m-p)$-dimensional manifold
such that at every point $(\z,\x) \in \RR(\sigma) \cap \psi^{-1}(\x)$,
the $(p \times m)$-Jacobi matrix,
\[
\left( \frac{\partial P}{\partial Z_i} \right)_{P \in{\mathcal H}_{\x}'', 
\sigma(P) = 0, 1 \leq i \leq m}
\]
has the maximal rank $p$.
\end{enumerate}
If a point $\x \in T \cap G_1 = T \cap \psi(C_1)$, then
there exists $\z \in \R'^m$ such that $(\z,\x)$
is a critical point of $\psi_{\RR (\sigma)}$
for some $\sigma \in \bigcup_{p \le m} {\rm Sign}_{p}({\mathcal H}'')$,
and this is impossible by (\ref{lem:rep:2}).

Similarly, $\x \in T \cap G_2 = T \cap \psi(C_2)$,
implies that there exists $\z \in \R'^m$ such that
$(\z,\x) \in \RR (\sigma)$ for some
$\sigma \in \bigcup_{p > m} {\rm Sign}_{p}({\mathcal H}'')$, and this
is impossible by (\ref{lem:rep:1}).
\end{proof}
Let $D$ be a connected component of
$\R'^{k} \setminus G$, and for a bounded
${\mathcal H}''$-semi-algebraic set~$V$,
let $D(V)$ be a 
connected component of $\psi(V) \setminus G(V)$.
\begin{lemma}
\label{lem:discriminant}
For every bounded ${\mathcal H}''$-semi-algebraic set~$V$,
all fibers $\psi^{-1}(\x) \cap V$, $\x \in D$
are homeomorphic.
\end{lemma}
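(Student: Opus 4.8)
The plan is to deduce Lemma~\ref{lem:discriminant} from Hardt's triviality theorem (Theorem~\ref{the:hardt}) applied to the map $\psi_V := \psi|_V$, after first checking that the locus $G(V)$ contains all the "bad" values over which the map $\psi_V$ can fail to be locally trivial. First I would recall that, by the existence of cylindrical algebraic decomposition (or directly by Hardt's theorem), there is a finite semi-algebraic partition $\{T_i\}_{i\in I}$ of $\psi(V)$ over each piece of which $\psi_V$ is definably trivial; in particular over each $T_i$ all fibers $\psi^{-1}(\x)\cap V$ are semi-algebraically homeomorphic. So it suffices to show that every connected component $D$ of $\R'^k\setminus G$ meets $\psi(V)\setminus G(V)$ in a set contained in a single such $T_i$ — equivalently, that $\psi_V$ is locally trivial at every $\x\in\psi(V)\setminus G(V)$, since local triviality at every point of a connected semi-algebraic set forces global triviality (again by Hardt, or by a connectedness argument).

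The key step is therefore the local triviality statement: \emph{for every $\x_0\in\psi(V)\setminus G(V)$ there is a semi-algebraic neighborhood $U\ni\x_0$ in $\psi(V)\setminus G(V)$ over which $\psi_V$ is trivial.} Here is where the work done in Lemmas~\ref{prop:A}--\ref{closed} is used. By Lemma~\ref{Whitney}, the partition of $\R'^{m+k}$ by the realizable sign conditions $\RR(\sigma)$, $\sigma\in{\rm Sign}(\mathcal H'')$, is a Whitney stratification compatible with $V$, so $V$ is a union of strata $\RR(\sigma)$, $\sigma\in\Sigma_V$. By definition of $G(V)=G_1(V)\cup G_2(V)$ (Definition~\ref{def:criticalvalues}), if $\x_0\notin G(V)$ then: (i) $\x_0$ is not the image of a critical point of $\psi$ restricted to any stratum $\RR(\sigma)$ of $V$ with $\dim\RR(\sigma)\ge k$, so $\psi$ restricted to each such stratum is a submersion along the fiber over $\x_0$; and (ii) no stratum of $V$ of dimension $<k$ projects onto $\x_0$, so the fiber $\psi^{-1}(\x_0)\cap V$ meets only strata of dimension $\ge k$. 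Conditions (i) and (ii), together with the fact that the stratification of $V$ is Whitney and $C(V)$ is closed (Lemma~\ref{closed}) so that the submersion property persists in a whole neighborhood of the fiber, are exactly the hypotheses of Thom's first isotopy lemma (the stratified version of Ehresmann's fibration theorem). Applying Thom's isotopy lemma to the proper stratified submersion $\psi_V$ over a small semi-algebraic ball $U$ around $\x_0$ (properness from boundedness and closedness of $V$) gives a stratum-preserving trivialization $\psi_V^{-1}(U)\cong (\psi_V^{-1}(\x_0)\cap V)\times U$. Since Thom's isotopy lemma is classically stated over $\re$, one passes to $\re$ via the Tarski--Seidenberg transfer principle (as in Remark~\ref{rem:transfer} and Remark~\ref{rem:spanier-whitehead}) and descends the conclusion back to $\R'$.

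Finally, since $\psi(V)\setminus G(V)$ is a semi-algebraic set and $D(V)$ is one of its connected components, the local trivializations just produced can be glued: the "homeomorphism type of the fiber" is a locally constant, hence constant, function on the connected set $D(V)$. Concretely, the relation "$\x\sim\y$ iff $\psi^{-1}(\x)\cap V$ is semi-algebraically homeomorphic to $\psi^{-1}(\y)\cap V$" is an equivalence relation whose classes are open in $\psi(V)\setminus G(V)$ by the local triviality above, hence also closed, so each class is a union of connected components; therefore all fibers over $D$ (which lies in a single component of $\psi(V)\setminus G(V)$, as $G(V)\subset G$) are semi-algebraically homeomorphic, proving the lemma. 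The main obstacle is the careful verification that $\x_0\notin G(V)$ really does guarantee the hypotheses of Thom's isotopy lemma over an entire neighborhood of the fiber — this is precisely where the closedness of $C(V)$ from Lemma~\ref{closed} and the Whitney condition from Lemma~\ref{Whitney} are needed, and where one must be slightly careful because $V$ is not a manifold but only a Whitney-stratified set.
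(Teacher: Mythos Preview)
Your proposal is correct and follows essentially the same route as the paper: use the Whitney stratification from Lemma~\ref{Whitney}, observe that away from $G(V)$ the restriction $\psi|_V$ is a proper stratified submersion, apply Thom's first isotopy lemma to get local triviality, and finish with the inclusion $G(V)\subset G$ so that $D$ sits inside some $D(V)$. The only notable difference is that the paper invokes directly the semi-algebraic version of Thom's first isotopy lemma over arbitrary real closed fields due to Coste--Shiota \cite{CS}, whereas you propose to transfer to $\re$ and back via Tarski--Seidenberg; the Coste--Shiota reference makes this step cleaner and avoids the (mild) subtlety of transferring a trivialization statement. Your opening detour through Hardt's theorem and the partition $\{T_i\}$ is unnecessary scaffolding, since you end up proving local triviality directly anyway.
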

\begin{proof}
Lemma~\ref{prop:B} and Lemma~\ref{Whitney} imply that
$\widehat V=\psi^{-1}(\psi(V) \setminus G(V))\cap V$ is a
Whitney stratified set having strata of dimensions at least $k$.
Moreover, $\psi|_{\widehat V}$ is a proper stratified submersion.
By Thom's first isotopy lemma (in the semi-algebraic version, over real closed fields
\cite{CS}) the map $\psi|_{\widehat V}$ is a locally trivial fibration.
In particular, all fibers $\psi^{-1}(\x)\cap V$, $\x \in D(V)$
are homeomorphic for every connected component $D(V)$.
The lemma follows, since
the inclusion $G(V) \subset G$ implies that either
$D \subset D(V)$ for some connected component $D(V)$, or
$D \cap \psi(V)= \emptyset$.
\end{proof}
\begin{lemma}
\label{components}
For each $\x \in T$,
there exists a connected component
$D$ of $\R'^k \setminus G$, such that
$\psi^{-1}(\x) \cap V$ is homeomorphic to $\psi^{-1}(\x_1) \cap V$
for every bounded ${\mathcal H}''$-semi-algebraic set~$V$ and
for every $\x_1 \in D$.
\end{lemma}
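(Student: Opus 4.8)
The plan is to deduce Lemma~\ref{components} directly from Lemmas~\ref{representatives} and~\ref{lem:discriminant}, the whole content being the observation that the component $D$ can be chosen uniformly over all bounded $\mathcal{H}''$-semi-algebraic sets $V$ at once.

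First I would fix $\x \in T$ and view it, through the embedding $\R^k \hookrightarrow \R'^k$, as a point of $\R'^k$. By Lemma~\ref{representatives} we have $T \cap G = \emptyset$, hence $\x \notin G$; since $G = \psi(C)$ is a semi-algebraic subset of $\R'^k$, the complement $\R'^k \setminus G$ is semi-algebraic and $\x$ lies in exactly one of its connected components, which I call $D$. Note that $D$ depends only on $\x$ (and on $G$), not on any particular set $V$.

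Then I would apply Lemma~\ref{lem:discriminant} to this component $D$: it asserts that for \emph{every} bounded $\mathcal{H}''$-semi-algebraic set $V$, all the fibers $\psi^{-1}(\x_1) \cap V$ with $\x_1 \in D$ are homeomorphic. Since $\x$ itself belongs to $D$, taking one of the two points to be $\x$ yields that $\psi^{-1}(\x) \cap V$ is homeomorphic to $\psi^{-1}(\x_1) \cap V$ for every $\x_1 \in D$ and every bounded $\mathcal{H}''$-semi-algebraic set $V$, which is precisely the claim.

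The point that deserves emphasis --- and the reason Lemma~\ref{lem:discriminant} applies with a single component $D$ valid for all $V$ simultaneously --- is that $G$ was built from the critical locus taken over \emph{all} realizable sign conditions of $\mathcal{H}''$, so that $G(V) \subseteq G$ for every $V$; consequently $D$ is contained in a single connected component of $\psi(V) \setminus G(V)$ (or else is disjoint from $\psi(V)$), which is exactly the hypothesis exploited in the proof of Lemma~\ref{lem:discriminant}. There is no quantitative obstacle in this lemma: the bound $\#T \le (2^m\ell k d)^{O(mk)}$ will come from separately estimating the set $T$ of Notation~\ref{not:T} and the discriminant $G$, not from the argument above. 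The only mild subtlety to state carefully is the identification of $\x \in \R^k$ with a point of $\R'^k$ and the fact, supplied by Lemma~\ref{representatives}, that this point avoids $G$.
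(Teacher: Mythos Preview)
Your argument is correct and is essentially identical to the paper's own proof: both use Lemma~\ref{representatives} to place $\x \in T$ in a connected component $D$ of $\R'^k \setminus G$, and then invoke Lemma~\ref{lem:discriminant} to conclude that all fibers over $D$ are homeomorphic for every bounded $\mathcal{H}''$-semi-algebraic set $V$. Your additional remarks about the uniformity of $D$ over all $V$ and the embedding $\R^k \hookrightarrow \R'^k$ are helpful clarifications, but do not change the approach.
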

\begin{proof}
Let $V$ be a bounded ${\mathcal H}''$-semi-algebraic set and $\x \in T$. 
By Lemma~\ref{representatives}, $\x$ belongs to some connected component~$D$ 
of $\R'^k \setminus G$. Lemma~\ref{lem:discriminant} implies that 
$\psi^{-1}(\x) \cap V$ is homeomorphic to $\psi^{-1}(\x_1) \cap V$
for every $\x_1 \in D$.
\end{proof}
We now are able to proof Proposition~\ref{prop:main}.
\begin{proof}[Proof of Proposition \ref{prop:main}]
Recall that $G=G_1 \cup G_2$,
where $G_1$ is the union of sets of critical values of
$\psi_{\RR (\sigma)}$ over all strata $\RR (\sigma)$ of dimensions at least $k$,
and $G_2$ is the union of projections of all strata of dimensions less than $k$. 

By Lemma~\ref{components} it suffices to bound the number of connected components 
of the set~$\R'^k \setminus G$. 
Denote by ${\mathcal E}_1$ the family of closed sets of critical points of
$\psi_{{\mathcal Z} (\sigma)}$, over all sign conditions $\sigma$ such that
strata $\RR (\sigma)$ have dimensions at least $k$ 
(the notation ${\mathcal Z} (\sigma)$
was introduced in Section~\ref{sub:notations}).
Let ${\mathcal E}_2$ be the family of closed sets ${\mathcal Z} (\sigma)$, over all
sign conditions $\sigma$ such that strata $\RR (\sigma)$ 
have dimensions equal to $k-1$. 
Let ${\mathcal E}= {\mathcal E}_1\cup {\mathcal E}_2$. 
Denote by $E$ the image under the projection $\psi$ of the union of all sets in 
the family ${\mathcal E}$.

Because of the transversality condition,
every stratum of the stratification of $V$, having the dimension
less than $m+k$, lies in the closure of a stratum, having the next higher dimension.
In particular, this is true for strata of dimensions less than $k-1$.
It follows that $G \subset E$, and thus
every connected component of the complement 
$\R'^k \setminus E$
is contained in a connected component of 
$\R'^k \setminus G$.
Since $\dim (E)<k$, every connected component of 
$\R'^k \setminus G$
contains a connected component of 
$\R'^k \setminus E$.
Therefore, it is sufficient to estimate from above the
Betti number 
${\rm b}_0 (\R'^k \setminus E)$ 
which is equal to
${\rm b}_{k-1}(E)$ by the Alexander's duality.

The total number of sets ${\mathcal Z} (\sigma)$, such that
$\sigma \in {\rm Sign}({\mathcal H}'')$ and $\dim ({\mathcal Z} (\sigma)) \ge k-1$,
is $O(\ell^{2(m+1)})$ because each ${\mathcal Z} (\sigma)$ is
defined by a conjunction of at most $m+1$ of possible 
$O(\ell^2+m)$ 
polynomial equations.

Thus, the cardinality $\# {\mathcal E}$, as well as the
number of images under the projection $\pi$ of sets in
${\mathcal E}$ is $O(\ell^{2(m+1)})$.
According to (\ref{eq:MV1}) in Proposition~\ref{prop:MV},
${\rm b}_{k-1}(E)$
does not exceed the sum of certain Betti numbers of sets of the type
$$
\Phi =\bigcap_{1 \le i \le p} \pi (U_i),
$$
where every $U_i \in {\mathcal E}$ and $1 \leq p \leq k$.
More precisely, we have
\[
\displaylines{
{\rm b}_{k-1}(E) \;\leq \;\sum_{1 \le p \le k}\quad
\sum_{ \{ U_{1}, \ldots ,U_{p} \} \subset\ {\mathcal E}}
{\rm b}_{k-p} \left( \bigcap_{1 \le i \le p} \pi (U_i) \right).
}
\]
Obviously, there are $O(\ell^{2(m+1)k})$ 
sets of the kind $\Phi$. 

Using inequality (\ref{eq:MV2}) in
Proposition \ref{prop:MV}, we have that for each $\Phi$ as above,
the Betti number ${\rm b}_{k-p}(\Phi)$ does not exceed
the sum of certain Betti numbers of unions of the kind,
$$\Psi = \bigcup_{1 \le j \le q} \pi (U_{i_j}) =
\pi \left( \bigcup_{1 \le j \le q} U_{i_j} \right),$$
with  $1 \leq q \leq p$.
More precisely,
\begin{eqnarray*}
{\rm b}_{k-p} (\Phi) &\;\leq\;&
\sum_{1 \le q \le p}\quad \sum_{1 \leq i_1 < \cdots< i_q \leq p}
{\rm b}_{k-p+q-1} \left( \pi \left( \bigcup_{1 \le j \le q} U_{i_j} \right) \right).
\end{eqnarray*}
It is clear that there are at most $2^{p} \leq 2^k$ sets of the kind $\Psi$.

If a set~$U \in {\mathcal E}_1$, then it is defined by $m$ polynomials
of degrees at most $O(\ell d)$. 
If a set~$U \in {\mathcal E}_2$, then it is defined by $O(2^m)$ polynomials
of degrees $O(m\ell d)$,
since the critical points on strata of dimensions at least $k$
are defined by $O(2^m)$ determinantal equations, the
corresponding matrices have orders $O(m)$, and
the entries of these matrices are polynomials of 
degrees at most $O(\ell d)$.

It follows that the closed and bounded set
$$\bigcup_{1 \le j \le q} U_{i_j}$$
is defined by $O(k2^m))$ polynomials of degrees $O(\ell d)$.

By Proposition~\ref{prop:GVZ},
${\rm b}_{k-p+q-1}(\Psi) \le (2^mk\ell d)^{O(mk)}$ for all $1 \le p \le k$, $1 \le q \le p$.
Then ${\rm b}_{k-p} (\Phi) \le (2^mk\ell d)^{O(mk)}$ for every $1 \le p \le k$.
Since there are $O(\ell^{2(m+1)k})$ sets of the kind $\Phi$, we get the
claimed bound
\[
{\rm b}_{k-1}(E) \le (2^mk\ell d)^{O(mk)}.
\]
The rest of the proof follows from Proposition~\ref{components}.
\end{proof}
%
\subsection{The Homogeneous Case}
\label{subsec:homogeneous}
%
We first consider the case, where all the polynomials in
${\mathcal Q}$ are homogeneous in variables $Y_0,\ldots,Y_\ell$ and 
we bound the number of homotopy types among the fibers~$S_{\x}$, 
defined by the ${\mathcal Q}$-closed 
semi-algebraic subsets $S$ of $\Sphere^{\ell} \times \R^k$.
We first the prove the following theorems for the special cases 
of unions and intersections.
\begin{theorem}
\label{the:union}
Let $\R$ be a real closed field and let
\[
{\mathcal Q} = \{Q_1,\ldots,Q_m\} \subset \R[Y_0,\ldots,Y_\ell,X_1,\ldots,X_k],
\]
where each $Q_i$ is homogeneous of degree $2$ in the 
variables $Y_0,\ldots,Y_\ell$,
and of degree at most $d$ in $X_1,\ldots,X_k$.

For $i\in [m]$, let  
$A_i\subset \Sphere^{\ell} \times \R^k$ 
be semi-algebraic sets defined by
\[
A_i = \{ (\y,\x) \;\mid\; |\y|=1\; \wedge\; Q_i(\y,\x) \leq 0)\}, 
\]
Let $\pi: \Sphere^{\ell} \times \R^{k} \rightarrow \R^k$ be 
the projection on the last $k$ co-ordinates.

Then, the number of homotopy types amongst the fibers 
$\displaystyle{\bigcup_{i=1}^m A_{i,\x}}$
is
bounded by 
\[
(2^m\ell k d)^{O(mk)}.
\]
\end{theorem}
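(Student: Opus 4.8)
The idea is to reduce the homotopy type of the fiber $\bigcup_{i=1}^m A_{i,\x}$ to the homotopy type of the parametrized set $C'_I$ for $I=[m]$, whose homotopy type is in turn governed by the isotopy type of the bases $F'_{I,j,\x}$ of the constituent sphere bundles. By Proposition~\ref{prop:homotopy2}, Proposition~\ref{prop:homotopy1} and Proposition~\ref{prop:homotopy3} (applied with the parameter $\x$ fixed, and using Remark~\ref{rem:transfer} to pass between $\R$ and $\R'$), the fiber $A_{[m],\x} = \bigcup_{i=1}^m A_{i,\x}$ is homotopy equivalent to $\E(C_{[m],\x},\R')$, which is homotopy equivalent to $C'_{[m],\x}$. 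So it suffices to bound the number of homotopy types of $C'_{[m],\x}$ as $\x$ ranges over $\R^k$ (equivalently $\R'^k$, after extension).

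\textbf{Key steps.} First I would invoke Proposition~\ref{prop:main} with $I=[m]$: it produces a finite set $T\subset\R^k$ of size $(2^m\ell k d)^{O(mk)}$ so that for any $\x$ there is $\z\in T$, a semi-algebraic path $\gamma$ from $\x$ to $\z$, and a continuous semi-algebraic family of homeomorphisms $\phi(\cdot,t)|_{F'_{[m],j,\x}}\colon F'_{[m],j,\x}\to F'_{[m],j,\gamma(t)}$ for all $j$ and all $t\in[0,1]$. Next I would argue that this family of homeomorphisms of the bases induces a homotopy equivalence between $C'_{[m],\x}$ and $C'_{[m],\z}$. Here is the substance: $C'_{[m],j,\x}\to F'_{[m],j,\x}$ is a $\Sphere^{\ell-j}$-bundle, and the bundle is determined (up to the relevant equivalence) by the continuously-varying subspace $L_j^+(\omega,\x)$; pulling back along $\phi(\cdot,t)$ gives a homotopy of bundle maps, hence the total spaces over $F'_{[m],j,\x}$ and $F'_{[m],j,\z}$ are fiber-homotopy equivalent, compatibly over the overlaps $F'_{[m],j,\x}\cap F'_{[m],j-1,\x}$ (Lemma~\ref{lem:local} controls how many overlaps there can be). Since $C'_{[m],\x}=\bigcup_{j=0}^{\ell+1} C'_{[m],j,\x}$ is the union of these sphere bundles glued along the sub-bundles over the pairwise intersections, a Mayer--Vietoris / gluing argument — in the spirit of Lemma~\ref{lem:hocolimit2}, realizing $C'_{[m],\x}$ as (homotopy equivalent to) a homotopy colimit of the $C'_{[m],j,\x}$'s — shows that the compatible fiber-homotopy equivalences assemble to a homotopy equivalence $C'_{[m],\x}\simeq C'_{[m],\z}$. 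Therefore the number of homotopy types of $\bigcup_i A_{i,\x}$ is at most $\#T \leq (2^m\ell k d)^{O(mk)}$, which is the claimed bound.

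\textbf{Main obstacle.} The routine parts (Proposition~\ref{prop:main} is already proved, and the equivalences $A_{[m],\x}\simeq C'_{[m],\x}$ follow from the propositions above) are not where the difficulty lies. The crux is the step asserting that a continuous semi-algebraic family of base homeomorphisms $\phi(\cdot,t)$ actually yields a homotopy equivalence of the glued total spaces $C'_{[m],\x}\simeq C'_{[m],\z}$. One must verify that the subspace-valued map $(\omega,\x)\mapsto L_j^+(\omega,\x)$ transports correctly under $\phi(\cdot,t)$ — i.e.\ that the pulled-back sphere bundle over $F'_{[m],j,\x}$ is fiber-homotopy equivalent to the original one — and that these equivalences can be chosen compatibly on all the intersections $F'_{[m],j,\x}\cap F'_{[m],j-1,\x}$ simultaneously, so that the homotopy colimit descriptions match up. This compatibility, together with the bookkeeping needed to invoke the homotopy colimit machinery of Lemma~\ref{lem:hocolimit1} and Lemma~\ref{lem:hocolimit2}, is the technically delicate heart of the argument; the Betti-number estimates feeding into Proposition~\ref{prop:main} (via Proposition~\ref{prop:GVZ} and Proposition~\ref{prop:MV}) are comparatively mechanical.
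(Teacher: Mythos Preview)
Your overall strategy matches the paper's: reduce $\bigcup_i A_{i,\x}$ to $C'_{[m],\x}$ via Propositions~\ref{prop:homotopy2}, \ref{prop:homotopy1}, \ref{prop:homotopy3}, then invoke Proposition~\ref{prop:main} to produce the finite set $T$, and finally argue that the resulting family of base homeomorphisms $\phi(\cdot,t)$ forces $C'_{[m],\x}\simeq C'_{[m],\z}$. The paper packages exactly this as Lemma~\ref{lem:prelim_union}, from which Theorem~\ref{the:union} follows in one line.

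Where you and the paper diverge is precisely at your ``main obstacle''. The paper does \emph{not} assemble piecewise fiber-homotopy equivalences via a homotopy colimit. Instead it observes that each sphere bundle $C'_{[m],j,\x}\to F'_{[m],j,\x}$ is classified by the map $\omega\mapsto L_j^+(\omega,\x)$ into the Grassmannian $Gr(\ell+1-j,\ell+1)$, and that the path $\gamma$ together with $\phi$ gives a homotopy between the classifying maps at $\x$ and at $\z$. It then appeals to the \emph{proof} of Lemma~5 in Fuks--Rokhlin (covering homotopy, built by induction over the skeleta of a triangulation of the base chosen compatible with all the $F'_{[m],j,\x}$) to produce a single fiber-preserving \emph{homeomorphism} $\tilde\phi:C'_{\x}\to C'_{\z}$ that restricts to bundle isomorphisms $C'_{[m],j,\x}\to C'_{[m],j,\z}$ for all $j$ simultaneously. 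This sidesteps your compatibility problem entirely: one global map is constructed rather than several that must afterwards be reconciled.

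This difference matters for the conclusion of Theorem~\ref{the:union}, which is about honest homotopy types, not stable ones. Lemma~\ref{lem:hocolimit2} as stated in the paper takes S-maps in and returns a stable equivalence (its proof passes through homology and Theorem~\ref{the:stable}); invoking it literally would only bound the number of \emph{stable} homotopy types. You could repair this by proving an unstable analogue (levelwise weak equivalences of cofibrant diagrams induce weak equivalences on homotopy colimits), but then you still owe the strict compatibility of the piecewise equivalences on the overlaps $F'_{[m],j,\x}\cap F'_{[m],j-1,\x}$ --- exactly what the paper's Fuks-style skeleton induction delivers for free by building $\tilde\phi$ globally.
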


With the same assumptions as in Theorem \ref{the:union} we have

\begin{theorem}
\label{the:intersection}
The number of stable homotopy types amongst the fibers 
$\displaystyle{\bigcap_{i=1}^m A_{i,\x}}$ 
is
bounded by 
\[
(2^m\ell k d)^{O(mk)}.
\]
\end{theorem}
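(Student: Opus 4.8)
The plan is to deduce Theorem~\ref{the:intersection} from Theorem~\ref{the:union} by passing to complements inside the sphere and applying Spanier-Whitehead duality; this passage is exactly what forces the conclusion to be about \emph{stable} homotopy types rather than homotopy types. Fix $\x \in \R^k$. Each fiber $A_{i,\x} = \{\y \in \Sphere^{\ell} \mid Q_i(\y,\x) \le 0\}$ is closed and bounded, hence so is $\bigcap_{i=1}^{m} A_{i,\x}$, and its complement in the sphere is the open semi-algebraic set
\[
U_\x \;=\; \Sphere^{\ell} \setminus \bigcap_{i=1}^{m} A_{i,\x} \;=\; \bigcup_{i=1}^{m} \{\y \in \Sphere^{\ell} \mid Q_i(\y,\x) > 0\}.
\]
By the properties of Spanier-Whitehead duality recalled in Section~\ref{subsec:stable} (which, by Remark~\ref{rem:spanier-whitehead}, hold over any real closed field), any deformation retract of $U_\x$ represents the dual complex $D_{\ell}\bigl(\bigcap_{i} A_{i,\x}\bigr)$, the functor $D_{\ell}$ carries stable homotopy equivalences to stable homotopy equivalences, and it is, up to stable homotopy equivalence, an involution (see \cite{Dieudonne}, pp.~603). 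Hence the assignment $\x \mapsto D_{\ell}\bigl(\bigcap_i A_{i,\x}\bigr)$ induces an injection on stable homotopy types, while the stable homotopy type of $D_{\ell}\bigl(\bigcap_i A_{i,\x}\bigr)$ depends only on the homotopy type of $U_\x$. It therefore suffices to bound the number of homotopy types occurring among the complements $U_\x$, $\x \in \R^k$.

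Next I would turn each open set $U_\x$ into a closed one by an infinitesimal perturbation, so that Theorem~\ref{the:union} becomes applicable. Let $\eps > 0$ be a new infinitesimal over $\R$, and for each $i \in [m]$ set
\[
Q_i' \;=\; -Q_i + \eps\sum_{j=0}^{\ell} Y_j^2 \;\in\; \R\la\eps\ra[Y_0,\ldots,Y_\ell,X_1,\ldots,X_k],
\]
which is again homogeneous of degree $2$ in $Y_0,\ldots,Y_\ell$ and of degree at most $d$ in $X_1,\ldots,X_k$. On the sphere $\{\y \mid Q_i'(\y,\x) \le 0\} = \{\y \mid Q_i(\y,\x) \ge \eps\}$, so the corresponding fiber $\bigcup_{i=1}^{m} A_{i,\x}'$ of weak-inequality sets is closed and bounded over $\R\la\eps\ra$. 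A standard deformation-retraction argument (see, e.g., \cite{BPR03}) together with Remark~\ref{rem:transfer} shows that for each $\x$ the complement $U_\x$ has the same homotopy type as $\bigcup_{i=1}^{m} A_{i,\x}'$ computed over $\R\la\eps\ra$; consequently the number of homotopy types among the $U_\x$, $\x \in \R^k$, is at most the number of homotopy types among the fibers $\bigcup_{i=1}^{m} A_{i,\x}'$ as $\x$ ranges over $\R\la\eps\ra^{k}$.

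Now apply Theorem~\ref{the:union} over the real closed field $\R\la\eps\ra$ to the family $\{Q_1',\ldots,Q_m'\}$: since the number of quadratic forms is still $m$, the $Y$-degree is still $2$, and the $X$-degree is still at most $d$, the number of homotopy types among the fibers $\bigcup_{i=1}^{m} A_{i,\x}'$ is bounded by $(2^m\ell k d)^{O(mk)}$. Chaining this bound with the two reductions above yields the asserted bound $(2^m\ell k d)^{O(mk)}$ on the number of stable homotopy types among the fibers $\bigcap_{i=1}^{m} A_{i,\x}$, $\x \in \R^k$.

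The step I expect to need the most care is the Spanier-Whitehead reduction: one must argue honestly that it is the full homotopy type of the complement $U_\x$, and not merely its homology, that determines the stable homotopy type of $\bigcap_i A_{i,\x}$, which is precisely where the duality functor $D_{\ell}$ and the homological characterization of stable homotopy equivalence (Theorem~\ref{the:stable}) enter, and is also the source of the weakening from homotopy type to stable homotopy type in the statement. The remaining ingredients — the infinitesimal homogenization and the bookkeeping of the parameters $m,\ell,d,k$ through Theorem~\ref{the:union} — are routine.
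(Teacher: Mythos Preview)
Your proposal is correct and follows essentially the same route as the paper: perturb the $Q_i$ by an infinitesimal $\eps$ so that the complement of $\bigcap_i A_{i,\x}$ in $\Sphere^\ell$ is represented by a closed union $\bigcup_i\{Q_i\ge\eps\}$, invoke Spanier-Whitehead duality to reduce the count of stable homotopy types of the intersections to the count of homotopy types of these unions, and then apply the union result over $\R\la\eps\ra$. The only difference is organizational: the paper packages this as Lemma~\ref{lem:prelim_intersection}, which applies the sharper Lemma~\ref{lem:prelim_union} (rather than Theorem~\ref{the:union} directly) in order to retain compatible maps for all $I\subset[m]$ needed later in Theorem~\ref{the:homogeneous}; for Theorem~\ref{the:intersection} alone your direct appeal to Theorem~\ref{the:union} is enough.
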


Before proving Theorems \ref{the:union} and \ref{the:intersection}
we first prove two preliminary lemmas.

\begin{lemma}
\label{lem:prelim_union}
There exists a finite set $T \subset \R^k$,
with 
\[
\# T \leq (2^m\ell kd)^{O(mk)},
\]
such that for every $\x \in \R^k$ there exists $\z \in T$, 
a semi-algebraic set 
$D_{\x,\z} \subset \R'^{m+\ell}$,
and semi-algebraic 
maps $f_\x,f_\z$, as shown in the diagram
below, such that $f_\x,f_\z$ are both homotopy equivalences.
\begin{equation}
\begin{diagram}
\node{}\node{D_{\x,\z}}\arrow{sw,tb}{f_\x}{\sim}\arrow{se,tb}{f_\z}{\sim}
\node{}\\
\node{\E(\bigcup_{i \in [m]}A_{i,\x},\R')} \node{} 
\node{\E(\bigcup_{i \in [m]}A_{i,\z},\R')}
\end{diagram}
\end{equation}

Moreover, for each $I \subset [m]$, 
there exists a subset $D_{I,\x,\z} \subset D_{\x,\z}$, such that
the restrictions, $f_{I,\x},f_{I,\z}$, 
of $f_\x,f_\z$ to $D_{I,\x,\z}$ give rise to the 
following diagram  in which all maps are again homotopy equivalences.

\begin{equation}
\begin{diagram}
\node{}\node{D_{I,\x,\z}}\arrow{sw,tb}{f_{I,\x}}{\sim}\arrow{se,tb}
{f_{I,\z}}{\sim}\node{}\\
\node{\E(\bigcup_{i \in I}A_{i,\x},\R')} \node{} \node{\E(
\bigcup_{i \in I}A_{i,\z},\R')}
\end{diagram}
\end{equation}
For each $I \subset J \subset [m]$, $D_{I,\x,\z} \subset D_{J,\x,\z}$ and
the maps $f_{I,\x},f_{I,\z}$ are restrictions of $f_{J,\x},f_{J,\z}$.
\end{lemma}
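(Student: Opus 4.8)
The plan is to work over the real closed field $\R' = \R\la\bar\eps,\bar\delta\ra$ of (\ref{eqn:defofR'}), where the sets $C'_{I}$, $F'_{I,j}$ and the conclusion of Proposition~\ref{prop:main} all live. For a point $\x$ and $I\subseteq[m]$ write $A_{I,\x}=\bigcup_{i\in I}A_{i,\x}$, and let $B_{I,\x},C_{I,\x}$ denote the corresponding fibres of $B_I,C_I$ (see (\ref{eqn:defofB_I}), (\ref{eqn:definition_of_C})). Applying Propositions~\ref{prop:homotopy2} and~\ref{prop:homotopy1} fibrewise over $\R^k$, extending to $\R'$ via Remark~\ref{rem:transfer}, and then applying Proposition~\ref{prop:homotopy3}, one obtains for every $\x$ a chain of semi-algebraic homotopy equivalences
\[
C'_{I,\x} \;\simeq\; \E(C_{I,\x},\R') \;\hookrightarrow\; \E(B_{I,\x},\R') \;\xrightarrow{\ \phi_2\ }\; \E(A_{I,\x},\R'),
\]
all of whose maps are compatible with restriction from $J$ to $I$ whenever $I\subseteq J$, since $\Omega_I\subseteq\Omega_J$ and the retraction of Proposition~\ref{prop:homotopy1} and the projection $\phi_2$ are defined fibrewise. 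Composing (taking homotopy inverses where necessary, which can be chosen semi-algebraic) yields a semi-algebraic homotopy equivalence $g_{I,\x}\colon C'_{I,\x}\to\E(A_{I,\x},\R')$ natural in $I$.

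First I would invoke Proposition~\ref{prop:main} to fix the set $T\subseteq\R^k$ with $\#T\leq(2^m\ell kd)^{O(mk)}$ (this is already the asserted bound), and, for a given $\x$, a point $\z\in T$, a semi-algebraic path $\gamma\colon[0,1]\to\R'^k$ with $\gamma(0)=\x$, $\gamma(1)=\z$, and a continuous semi-algebraic family of homeomorphisms $\phi(\cdot,t)|_{F'_{I,j,\x}}\colon F'_{I,j,\x}\to F'_{I,j,\gamma(t)}$ valid \emph{simultaneously} for all $I\subseteq[m]$ and all $j$. Using this data I would set
\[
D_{I,\x,\z} \;=\; \bigcup_{j=0}^{\ell+1}\bigl\{(\omega,\y,t)\ \big|\ t\in[0,1],\ (\omega,\gamma(t))\in F'_{I,j,\gamma(t)},\ \y\in L_j^+(\omega,\gamma(t)),\ |\y|=1\bigr\},
\]
the total space of the one-parameter family $t\mapsto C'_{I,\gamma(t)}$, and $D_{\x,\z}=D_{[m],\x,\z}$, a semi-algebraic subset of $\Omega\times\Sphere^{\ell}\times[0,1]$. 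Since $\Omega_I\subseteq\Omega_J$ and the construction restricts, $D_{I,\x,\z}\subseteq D_{J,\x,\z}$ for $I\subseteq J$; and, because $\gamma(0)=\x$, $\gamma(1)=\z$, one has $D_{I,\x,\z}|_{t=0}=C'_{I,\x}$ and $D_{I,\x,\z}|_{t=1}=C'_{I,\z}$.

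The heart of the proof is the claim that the inclusions $C'_{I,\x}=D_{I,\x,\z}|_{t=0}\hookrightarrow D_{I,\x,\z}$ and $C'_{I,\z}=D_{I,\x,\z}|_{t=1}\hookrightarrow D_{I,\x,\z}$ are semi-algebraic deformation retracts, and can be taken compatibly over all $I\subseteq[m]$. For each $j$, the $j$-th stratum of $D_{I,\x,\z}$ is an $\Sphere^{\ell-j}$-bundle over $\{(\omega,t):(\omega,\gamma(t))\in F'_{I,j,\gamma(t)}\}$, which $\phi$ identifies with $F'_{I,j,\x}\times[0,1]$; since $(\omega,\x)\mapsto L_j^+(\omega,\x)$ is continuous, this bundle is the pullback of its restriction over $\{t=0\}$ along the deformation retraction onto $F'_{I,j,\x}\times\{0\}$, hence (homotopy invariance of bundles, in the semi-algebraic setting via triviality over $[0,1]$) it is trivial in the $[0,1]$-direction. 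Because the base deformation ($t\mapsto0$) is the same for every stratum, these trivializations agree along the overlap sub-$\Sphere^{\ell-j}$-bundles over $F'_{I,j,\x}\cap F'_{I,j-1,\x}$ (cf.\ Lemma~\ref{lem:local}) and assemble to a single semi-algebraic deformation retraction of $D_{I,\x,\z}$ onto $C'_{I,\x}$; symmetrically onto $C'_{I,\z}$. Naturality in $I$ is automatic since all the data restrict. I would then define $f_{I,\x}$ as the composite of this retraction $D_{I,\x,\z}\to C'_{I,\x}$ with $g_{I,\x}$, and $f_{I,\z}$ analogously; both are semi-algebraic homotopy equivalences, $f_{I,\x}=f_{J,\x}|_{D_{I,\x,\z}}$ for $I\subseteq J$, and $f_\x=f_{[m],\x}$, $f_\z=f_{[m],\z}$, which is the claim.

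I expect the main obstacle to lie in the third paragraph: although each individual stratum is an honest sphere bundle over a base that $\phi$ makes homeomorphic to $F'_{I,j,\x}\times[0,1]$ (hence trivial in the $[0,1]$-direction), one must check that the trivializations can be chosen to match on the lower-dimensional overlap sphere-bundles, that the resulting deformation retraction is semi-algebraic, and that the whole construction is uniform in $I$ --- this last point being exactly where it matters that one single $\phi$ from Proposition~\ref{prop:main} works for all $I$ and $j$ at once. A secondary point needing care is the naturality in $I$ of the chain of equivalences in the first paragraph, together with the fact that a semi-algebraic homotopy equivalence admits a semi-algebraic homotopy inverse, so that the maps $g_{I,\x}$, and hence $f_{I,\x}$, are genuine semi-algebraic maps.
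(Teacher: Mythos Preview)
Your proposal is correct in outline and follows the same overall architecture as the paper: invoke Proposition~\ref{prop:main} to get $T$ and the simultaneous family of homeomorphisms $\phi(\cdot,t)$ of the bases $F'_{I,j}$, then use homotopy invariance of the sphere bundles $C'_{I,j}$ together with the chain $C'_{I,\x}\to\E(C_{I,\x},\R')\hookrightarrow\E(B_{I,\x},\R')\xrightarrow{\phi_2}\E(A_{I,\x},\R')$ from Propositions~\ref{prop:homotopy2}, \ref{prop:homotopy1}, \ref{prop:homotopy3}. Two remarks on the comparison.

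First, your choice of $D_{\x,\z}$ differs from the paper's. The paper takes simply $D_{\x,\z}=C'_\x$ and produces a \emph{homeomorphism} $\tilde\phi\colon C'_\x\to C'_\z$ (restricting to $\tilde\phi_{I,j}\colon C'_{I,j,\x}\to C'_{I,j,\z}$), then sets $f_\x=\phi_2\circ i\circ r$ and $f_\z=\phi_2\circ i\circ r\circ\tilde\phi$. You instead form the one-parameter total space over $[0,1]$ and show both ends are deformation retracts. These are equivalent packagings of the same covering-homotopy argument; yours is perhaps conceptually cleaner but introduces an extra $[0,1]$ factor (so your $D_{\x,\z}$ sits in one dimension more than stated). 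Note also that no homotopy inverses are needed in the chain: the map $r\colon C'_\x\to\E(C_\x,\R')$ goes in the right direction (see diagram~(\ref{eqn:diagramofmaps})), so $g_{I,\x}=\phi_2\circ i\circ r$ is already a genuine semi-algebraic map.

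Second, the obstacle you correctly isolate --- that the stratum-wise bundle trivializations must be made compatible across the overlap sub-bundles over $F'_{I,j}\cap F'_{I,j-1}$ and uniformly in $I$ --- is exactly the point the paper addresses explicitly. Their device is to rewrite each sphere bundle via its classifying map $F'_{I,j,\x}\to Gr(\ell+1-j,\ell+1)$, $\omega\mapsto L_j^+(\omega,\x)$; the isotopy $\phi$ then gives a homotopy of classifying maps respecting the inclusions $L_j^+\subset L_{j-1}^+$, and they extend the standard bundle-classification lemma (Lemma~5 of \cite{Fuks}) by induction on the skeleta of a triangulation of $F'_{I,\x}$ compatible with all the $F'_{I,j,\x}$. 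That argument transposes verbatim to your setting to produce the compatible deformation retractions you need.
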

\begin{proof}[Proof of Lemma \ref{lem:prelim_union}]
By Proposition \ref{prop:main}, there exists $T \subset \R^k$ with 
\[
\#T \le (2^m\ell k d)^{O(m k)},
\]
such that for every $\x \in \R^k$, there exists
$\z \in T$, with the following property.

There is a semi-algebraic path,
$\gamma: [0,1] \rightarrow \R'^k$ and a continuous semi-algebraic map,
$\phi: \Omega \times [0,1]  \rightarrow \Omega $, 
with $\gamma(0) = \x$, $\gamma(1) = \z$, and
for each $I \subset [m]$,
\[
\phi(\cdot,t)|_{F'_{I,j,\x}}: F'_{I,j,\x} \rightarrow F'_{I,j,\gamma(t)},
\]
is a homeomorphism for each $0 \leq t \leq 1$ 
(see (\ref{eqn:defofOmega_I}), (\ref{eqn:defofR'}) 
and (\ref{def:Fprime}) for the definition of $\Omega$, $\R'$ and $F'_{I,j}$).

Now, observe that $C_{I,j,\x}'$ (resp. $C_{I,j,\z}'$) is a sphere bundle
over $F_{I,j,\x}'$ (resp. $F_{I,j,\z}'$). Moreover
\[
C'_{I,j,\x}  =  \{(\omega,\y) \;\mid\; \omega \in F'_{I,j,\x}, 
\y \in L_j^+(\omega,\x), |\y| = 1\},
\]
and, for $\omega \in F_{I,j,\x}' \cap F_{I,j-1,\x}'$, 
we have 
$L_j^+(\omega,\x) \subset L_{j-1}^+(\omega,\x)$.

We now prove that the map $\phi$ induces a homeomorphism
$\tilde{\phi}:C_{\x}' \rightarrow C_{\z}'$,
which for each $I \subset [m]$ and $0 \leq j \leq \ell$, 
restricts to a homeomorphism 
${\tilde{\phi}_{I,j}: C_{I,j,\x}' \rightarrow C_{I,j,\z}'}$. 

First recall that 
by a standard result in the theory of bundles
(see for instance, \cite{Fuks}, p.~313, Lemma~5),
the isomorphism
class of the sphere bundle 
$C_{I,j,\x}' \rightarrow F_{I,j,\x}'$, is determined by the homotopy 
class of the map,
\begin{eqnarray*}
F_{I,j,\x}' &\rightarrow & Gr(\ell+1-j,\ell+1) \\
\omega &\mapsto& L_j^+(\omega,\x),
\end{eqnarray*}
where $Gr(m,n)$ denotes the Grassmannian variety of $m$ dimensional subspaces
of $\R'^n$.

The map $\phi$ induces for each $j, 0 \leq j \leq \ell$, 
a homotopy between the maps 
\begin{eqnarray*}
f_0:F_{I,j,\x}'&\rightarrow & Gr(\ell+1-j,\ell+1)\\
\omega &\mapsto & L_j^+(\omega,\x)
\end{eqnarray*}
and 
\begin{eqnarray*}
f_1:F_{I,j,\z}'&\rightarrow & Gr(\ell+1-j,\ell+1)\\
\omega & \mapsto & L_j^+(\omega,\z)
\end{eqnarray*}
(after indentifying the sets $F_{I,j,\x}'$ and $F_{I,j,\z}'$ since they are
homeomorphic)
which respects the inclusions
$L_j^+(\omega,\x) \subset L_{j-1}^+(\omega,\x)$, and
$L_j^+(\omega,\z) \subset L_{j-1}^+(\omega,\z)$.

The above observation in conjunction with Lemma 5 in \cite{Fuks} 
is sufficient to prove the equivalence of the sphere bundles
$C_{I,j,\x}'$ and  $C_{I,j,\z}'$. 
But we need to prove a more general
equivalence, involving all the sphere bundles $C_{I,j,\x}'$ simultaneously,
for $0 \leq j \leq \ell$.
 
However, note that the proof of Lemma~5 in \cite{Fuks} proceeds by induction
on the skeleton of the CW-complex of the base of the bundle.
After choosing a sufficiently fine triangulation of the set $F_{I,\x}' \cong F_{I,\z}'$ 
compatible with the closed subsets $F_{I,j,\x}' \cong F_{I,j,\z}'$,
the same proof extends
without difficulty to this slightly more general situation
to give a fiber preserving 
homeomorphism,
$\tilde{\phi}:C_{\x}' \rightarrow C_{\z}'$,
which restricts to an isomorphism of sphere bundles,
$
\displaystyle{
\tilde{\phi}_{I,j}: C_{I,j,\x}' \rightarrow C_{I,j,\z}', 
}
$ 
for each $I \subset [m]$ and $0 \leq j \leq \ell$.

We have the following maps.

\begin{equation}
\label{eqn:diagramofmaps}
\begin{diagram}
\node{\E(A_{\x},\R')} \node{\E(B_{\x},\R')}\arrow{w,t}{\phi_2}  
\node{\E(C_{\x},\R')} \arrow{w,t}{i}  \node{C_{\x}'}\arrow{w,t}{r}
\arrow{s,r}{\tilde{\phi}} \\
\node{\E(A_{\z},\R')} \node{\E(B_{\z},\R')}\arrow{w,t}{\phi_2}  
\node{\E(C_{\z},\R')} \arrow{w,t}{i}  \node{C_{\z}'}\arrow{w,t}{r}
\end{diagram}
\end{equation}
The map $i$ is the inclusion map, and $r$ is a retraction
shown to exist by Proposition~\ref{prop:homotopy3}.

Since all the maps 
$\phi_2,i,r$
have been shown to be homotopy equivalences,
by Propositions 
\ref{prop:homotopy2}, \ref{prop:homotopy1}, and
\ref{prop:homotopy3} respectively, 
their composition is also a homotopy equivalence.

Moreover, for each $I \subset [m]$, the maps in the above diagram 
restrict properly to give a corresponding diagram:

\begin{equation}
\label{eqn:diagramofmapsI}
\begin{diagram}
\node{\E(A_{I,\x},\R')} \node{\E(B_{I,\x},\R')}\arrow{w,t}{\phi_2}  
\node{\E(C_{I,\x},\R')} \arrow{w,t}{i}  \node{C_{I,\x}'}\arrow{w,t}{r}
\arrow{s,r}{\tilde{\phi}} \\
\node{\E(A_{I,\z},\R')} \node{\E(B_{I,\z},\R')}\arrow{w,t}{\phi_2}  
\node{\E(C_{I,\z},\R')} \arrow{w,t}{i}  \node{C_{I,\z}'}\arrow{w,t}{r}
\end{diagram}
\end{equation}
Now let $D_{\x,\z} = C_{\x}'$, and $f_\x = \phi_2 \circ i\circ r$ 
and $f_\z = \phi_2 \circ i\circ r \circ \tilde{\phi}$.
Finally, for each $I \subset [m]$, let
$D_{I,\x,\z} = C_{I,\x}'$ and the maps $f_{I,\x}, f_{I,\z}$ the
restrictions of $f_\x$ and $f_\z$ respectively to
$D_{I,\x,\z}$. The collection of 
sets $D_{I,\x,\z}$ and the maps $f_{I,\x}, f_{I,\z}$ clearly satisfy
the conditions of the lemma. 
This completes the proof of the lemma.
\end{proof}

\begin{remark}
\label{rem:prelim_union}
Note that 
if $\R_1$ is a real closed sub-field of $\R$,
then Lemma \ref{lem:prelim_union} continues to hold after
we substitute ``$T \subset \R_1^k$'' and ``for all $\x \in \R_1^k$''
in place of ``$T \subset \R^k$'' and ``for all $\x \in \R^k$'' 
in the statement of the lemma. This is a consequence of the Tarski-Seidenberg
transfer principle.
\end{remark}

With the same hypothesis as in Lemma \ref{lem:prelim_union} we also have,
\begin{lemma}
\label{lem:prelim_intersection}
There exists a finite set $T \subset \R^k$,
with 
\[
\# T \leq (2^m\ell kd)^{O(mk)},
\]
such that for every $\x \in \R^k$ there exists $\z \in T$,
for each $I \subset [m]$,  
a semi-algebraic set $E_{I,\x,\z}$ 
defined over 
$\R''$, 
where $\R'' = \R\la\eps,\bar{\eps},\bar{\delta}\ra$ 
(see (\ref{eqn:defofR'} for the definition of $\bar{\eps}$ and $\bar{\delta}$), and 
S-maps
$g_{I,\x},g_{I,\z}$ 
as shown in the diagram
below such that $g_{I,\x},g_{I,\z}$ are both stable homotopy equivalences.
\begin{equation}
\begin{diagram}
\node{}\node{E_{I,\x,\z}}
\node{}\\
\node{\E(\bigcap_{i \in I}A_{i, \x},\R'')}\arrow{ne,tb}{g_\x}{\sim} 
\node{} 
\node{\E(\bigcap_{i \in I}A_{i,\z},\R'')}\arrow{nw,tb}{g_\z}{\sim}
\end{diagram}
\end{equation}

For each $I \subset J \subset [m]$, 
$E_{J,\x,\z} \subset E_{I,\x,\z}$ and 
the maps $g_{J,\x},g_{J,\z}$ are restrictions of
of $g_{I,\x},g_{I,\z}$.
\end{lemma}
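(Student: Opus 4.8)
The strategy is to deduce Lemma~\ref{lem:prelim_intersection} from Lemma~\ref{lem:prelim_union} by Spanier--Whitehead duality, after a homogenization trick that rewrites a conjunction $\bigcap_{i\in I}A_{i,\x}$ as a deformation retract of the complement in $\Sphere^{\ell}$ of a \emph{disjunction} of sets of exactly the type treated in Lemma~\ref{lem:prelim_union}. Introduce the new infinitesimal $\eps$ and, for $1\le i\le m$, set
\[
\tilde Q_i \;=\; -Q_i \;+\; \eps\,(Y_0^2+\cdots+Y_\ell^2)\;\in\;\R\langle\eps\rangle[Y_0,\ldots,Y_\ell,X_1,\ldots,X_k],
\]
which is again homogeneous of degree $2$ in $Y_0,\ldots,Y_\ell$ and of degree at most $d$ in $X_1,\ldots,X_k$. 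On $\Sphere^{\ell}$ one has $\{\tilde Q_i(\cdot,\x)\le 0\}=\{Q_i(\cdot,\x)\ge\eps\}$, so for $I\subset[m]$ the set
\[
X_{I,\x}\;:=\;\bigcup_{i\in I}\{\,\y\in\Sphere^{\ell}\mid \tilde Q_i(\y,\x)\le 0\,\}\;=\;\bigcup_{i\in I}\{\,\y\in\Sphere^{\ell}\mid Q_i(\y,\x)\ge\eps\,\}
\]
is a disjunction of weak quadratic inequalities, and $\Sphere^{\ell}\setminus X_{I,\x}=\bigcap_{i\in I}\{Q_i(\cdot,\x)<\eps\}$, an open neighbourhood of $\bigcap_{i\in I}A_{i,\x}$.

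Next I would apply Lemma~\ref{lem:prelim_union} to the family $\{\tilde Q_1,\ldots,\tilde Q_m\}$ over the real closed field $\R\langle\eps\rangle$. By Remark~\ref{rem:prelim_union} we may still take the finite set $T$ inside $\R^k$, with $\#T\le(2^m\ell kd)^{O(mk)}$, so that for every $\x\in\R^k$ there is a $\z\in T$, a common semi-algebraic set $\tilde D_{I,\x,\z}$ over $\R\langle\eps\rangle\langle\bar\eps,\bar\delta\rangle=\R''$ for each $I\subset[m]$, and homotopy equivalences $\tilde D_{I,\x,\z}\to\E(X_{I,\x},\R'')$ and $\tilde D_{I,\x,\z}\to\E(X_{I,\z},\R'')$, all compatible with the inclusions $\tilde D_{I,\x,\z}\subset\tilde D_{J,\x,\z}$ and $X_{I,\x}\subset X_{J,\x}$ for $I\subset J$. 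All sets occurring here are closed and bounded, hence finite CW-complexes.

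Now I would dualize. By Remark~\ref{rem:spanier-whitehead} Spanier--Whitehead duality is available over $\R''$; set $E_{I,\x,\z}:=D_\ell(\tilde D_{I,\x,\z})$. Since $D_\ell$ is contravariant, sends S-equivalences to S-equivalences, and sends an inclusion to an inclusion (the properties recalled in Subsection~\ref{subsec:stable}), dualizing the homotopy equivalences above yields S-equivalences $D_\ell\E(X_{I,\x},\R'')\to E_{I,\x,\z}$ and $D_\ell\E(X_{I,\z},\R'')\to E_{I,\x,\z}$, and dualizing $\tilde D_{I,\x,\z}\subset\tilde D_{J,\x,\z}$ yields inclusions $E_{J,\x,\z}\subset E_{I,\x,\z}$ through which these S-maps restrict. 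Finally I would identify $D_\ell\E(X_{I,\x},\R'')$ with $\E\bigl(\bigcap_{i\in I}A_{i,\x},\R''\bigr)$: since $\eps$ is infinitesimal and the semi-algebraic function $\max_{i\in I}Q_i$ is defined over $\R$ (so none of its finitely many critical values lies in $(0,\eps]$), the closed set $\bigcap_{i\in I}\{Q_i(\cdot,\x)\le 0\}$ is a deformation retract of its infinitesimal neighbourhood $\bigcap_{i\in I}\{Q_i(\cdot,\x)<\eps\}=\Sphere^{\ell}\setminus X_{I,\x}$ (standard deformation-retract arguments for infinitesimal neighbourhoods, cf.~\cite{BPR03}), and therefore represents $D_\ell\E(X_{I,\x},\R'')$; likewise for $\z$. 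Composing gives the desired S-equivalences $g_{I,\x}\colon\E\bigl(\bigcap_{i\in I}A_{i,\x},\R''\bigr)\to E_{I,\x,\z}$ and $g_{I,\z}\colon\E\bigl(\bigcap_{i\in I}A_{i,\z},\R''\bigr)\to E_{I,\x,\z}$, with $g_{J,\bullet}$ the restriction of $g_{I,\bullet}$ for $I\subset J$.

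The step I expect to be the main obstacle is making Spanier--Whitehead duality genuinely \emph{functorial} over the poset of subsets $I\subset[m]$: one must choose the dual complexes $E_{I,\x,\z}$, the inclusions $E_{J,\x,\z}\subset E_{I,\x,\z}$, the representing deformation retractions of the neighbourhoods $\bigcap_{i\in I}\{Q_i<\eps\}$ onto the sets $\bigcap_{i\in I}A_{i,\cdot}$, and the S-maps $g_{I,\cdot}$ all simultaneously, so that every required restriction holds on the nose and not merely up to homotopy. As in the proof of Lemma~\ref{lem:prelim_union}, this should be done by fixing a sufficiently fine semi-algebraic triangulation compatible with all the relevant subsets and performing the duality and the infinitesimal deformation retractions cell by cell over the skeleta; the remaining verifications (homogeneity and degree bounds for $\tilde Q_i$, compactness of the spaces involved, and the infinitesimal-neighbourhood retraction itself) are routine.
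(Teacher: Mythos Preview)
Your proposal is correct and follows essentially the same route as the paper: introduce the infinitesimal $\eps$ and the perturbed quadrics so that $\bigcap_{i\in I}A_{i,\x}$ is a deformation retract of the complement in $\Sphere^{\ell}$ of a union of the form handled by Lemma~\ref{lem:prelim_union}, apply that lemma over $\R\langle\eps\rangle$ (using Remark~\ref{rem:prelim_union} to keep $T\subset\R^k$), and then take Spanier--Whitehead duals of the resulting diagrams, using the fact (recalled in Subsection~\ref{subsec:stable}) that $D_\ell$ sends inclusions to inclusions to obtain the required compatibility of the $E_{I,\x,\z}$ and of the $g_{I,\bullet}$ under $I\subset J$. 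The paper's proof is terser on the functoriality issue you flag, but relies on exactly the same ingredients.
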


\begin{proof}
Let 
$1\gg \eps > 0$ be an infinitesimal.
For $1 \leq i \leq m$,
we define
\begin{equation}
\label{eqn:tildeQ}
\tilde{Q}_i = Q_i + \eps(Y_0^2 + \cdots + Y_\ell^2),
\end{equation}
\begin{equation}
\label{eqn:tildeA} 
\tilde{A}_i = \{ (\y,\x) \;\mid\; |\y|=1\; \wedge\; \tilde{Q_i}(\y,\x)
\leq 0)\}.
\end{equation}

Note that the set 
$\displaystyle{\bigcap_{i \in I} \tilde{A}_{i,\x}}$ is homotopy equivalent to 
$\displaystyle{\E(\bigcap_{i \in I} A_{i,\x},\R\la\eps\ra)}$ 
for each $I \subset [m]$ and $\x \in \R^k$.
Applying Lemma \ref{lem:prelim_union} 
(see Remark \ref{rem:prelim_union})
to the family $\tilde{\mathcal Q} = \{-\tilde{Q}_1, \ldots, -\tilde{Q}_m\}$,
we have, 
that there exists a finite set $T \subset \R^k$,
with 
\[
\# T \leq (2^m\ell kd)^{O(mk)},
\]
such that for every $\x \in \R^k$ there exists $\z \in T$ 
such that for each $I \subset [m]$, the following
diagram
\begin{equation}
\begin{diagram}
\label{eqn:diaginlemma}
\node{}\node{\tilde{D}_{I,\x,\z}}
\arrow{sw,tb}{\tilde{f}_{I,\x}}{\sim}
\arrow{se,tb}{\tilde{f}_{I,\z}}{\sim}
\node{}\\
\node{\E(\bigcup_{i \in I}\tilde{A}_{i,\x},\R'')} \node{} 
\node{\E(\bigcup_{i \in I} \tilde{A}_{i,\z},\R'')}
\end{diagram}
\end{equation}
where for each $\x \in \R^k$ we denote 
\[
\tilde{A}_{i,\x} = \{ (\y,\x) \;\mid\; |\y|=1\; \wedge\; 
-\tilde{Q}_i(\y,\x)  \leq 0)\},
\]
$\tilde{f}_{I,\x},\tilde{f}_{I,\z}$ are homotopy equivalences.

Note that for each $\x \in \R^k$,
the set 
$
\displaystyle{
 \E(\bigcap_{i \in I} A_{i,\x},\R'')
}
$ 
is a deformation retract of the complement of 
$\displaystyle{\E(\bigcup_{i \in I}\tilde{A}_{i,\x},\R'')}$ and hence
is Spanier-Whitehead dual 
to 
$\displaystyle{\E(\bigcup_{i \in I}\tilde{A}_{i,\x},\R'')}$.
The lemma now follows by taking the Spanier-Whitehead dual of 
diagram (\ref{eqn:diaginlemma}) above for each $I \subset [m]$.
\end{proof}
\begin{proof}[Proof of Theorem \ref{the:union}]
Follows directly  from  Lemma \ref{lem:prelim_union}.
\end{proof}
\begin{proof}[Proof of Theorem \ref{the:intersection}]
Follows directly  from  Lemma \ref{lem:prelim_intersection}.
\end{proof}
We now prove a homogenous version of Theorem~\ref{the:main}

\begin{theorem}
\label{the:homogeneous}
Let $\R$ be a real closed field and let
\[
{\mathcal Q} = \{Q_1,\ldots,Q_m\} \subset \R[Y_0,\ldots,Y_\ell,X_1,\ldots,X_k],
\]
where each $Q_i$ is homogeneous of degree $2$ in the 
variables $Y_0,\ldots,Y_\ell$,
and of degree at most $d$ in $X_1,\ldots,X_k$.

Let $\pi: \Sphere^{\ell} \times \R^{k} \rightarrow \R^k$ be 
the projection on the last $k$ co-ordinates. 
Then,
for any ${\mathcal Q}$-closed semi-algebraic set~$S \subset \Sphere^\ell\times\R^k$,
the number of stable homotopy types amongst the fibers~$S_{\x}$  
is bounded by 
\[
(2^m\ell k d)^{O(mk)}.
\]
\end{theorem}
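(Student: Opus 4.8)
The plan is to reduce the case of a general ${\mathcal Q}$-closed set to the case of finite intersections treated in Lemma~\ref{lem:prelim_intersection}, by realizing each fiber (up to stable homotopy) as a homotopy colimit of such intersections. I first normalize the defining formula. Replace ${\mathcal Q}=\{Q_1,\dots,Q_m\}$ by $\tilde{\mathcal Q}=\{Q_1,\dots,Q_m,-Q_1,\dots,-Q_m\}$, of cardinality $2m$; this affects the final bound only through the harmless factor $2^{2m}=(2^m)^2$. Rewriting each atom $Q_i\ge 0$ as $-Q_i\le 0$ and each atom $Q_i=0$ as $Q_i\le 0\wedge -Q_i\le 0$, I may assume $S$ is defined by a Boolean formula without negations all of whose atoms have the form $Q_j\le 0$ with $Q_j\in\tilde{\mathcal Q}$. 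Putting this formula in disjunctive normal form yields, for every $\x\in\R^k$, a decomposition $S_{\x}=\bigcup_{\alpha\in[n]}S_{\alpha,\x}$ with $S_{\alpha,\x}=\bigcap_{j\in I_\alpha}A_{j,\x}$, where $I_\alpha\subset[2m]$ and $A_{j,\x}=\{\y\in\Sphere^\ell\mid Q_j(\y,\x)\le 0\}$. The key observation is that for every $I\subset[n]$ one has $\bigcap_{\alpha\in I}S_{\alpha,\x}=\bigcap_{j\in J_I}A_{j,\x}$ with $J_I=\bigcup_{\alpha\in I}I_\alpha\subset[2m]$, so every intersection appearing in the nerve of the cover $\{S_{\alpha,\x}\}_{\alpha}$ is again one of the intersections handled by Lemma~\ref{lem:prelim_intersection}.

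Next I pass to the real closed extension $\R''=\R\la\eps,\bar{\eps},\bar{\delta}\ra$ of Lemma~\ref{lem:prelim_intersection}. Since each $S_{\x}$ is closed and bounded, the Tarski--Seidenberg transfer principle (cf.\ Remarks~\ref{rem:transfer} and~\ref{rem:spanier-whitehead} together with Theorem~\ref{the:stable}) implies that stable homotopy equivalence of such sets does not depend on the real closed base field, so it suffices to bound the number of stable homotopy types of the sets $\E(S_{\x},\R'')$. Choosing a sufficiently fine semi-algebraic triangulation of $\E(\Sphere^\ell,\R'')$ compatible with all the sets $\E(S_{\alpha,\x},\R'')$ and their intersections, the family $\A_{\x}=\{\E(S_{\alpha,\x},\R'')\}_{\alpha\in[n]}$ consists of sub-complexes of a finite CW-complex, so Lemma~\ref{lem:hocolimit1} gives $\E(S_{\x},\R'')=\colimit(\A_{\x})\simeq\hocolimit(\A_{\x})$, and similarly for $\z$.

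Now let $T\subset\R^k$, with $\#T\le(2^m\ell kd)^{O(mk)}$, be the finite set produced by Lemma~\ref{lem:prelim_intersection} applied to $\tilde{\mathcal Q}$. Fix $\x\in\R^k$, take the corresponding $\z\in T$, and let $\{E_{K,\x,\z}\}_{K\subset[2m]}$ be the compatible family of closed semi-algebraic sets over $\R''$, together with the S-equivalences $g_{K,\x}\colon\E(\bigcap_{j\in K}A_{j,\x},\R'')\to E_{K,\x,\z}$ and $g_{K,\z}\colon\E(\bigcap_{j\in K}A_{j,\z},\R'')\to E_{K,\x,\z}$, where $g_{K',\cdot}$ restricts $g_{K,\cdot}$ whenever $K\subset K'$. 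Set $B_{\alpha}=E_{I_\alpha,\x,\z}$; assuming (this is the delicate point, discussed below) that $\bigcap_{\alpha\in I}B_{\alpha}=E_{J_I,\x,\z}$ for every $I\subset[n]$, the maps $\{g_{J_I,\x}\}_{I\subset[n]}$ satisfy the hypotheses of Lemma~\ref{lem:hocolimit2}, so $\hocolimit(\A_{\x})$ is stable homotopy equivalent to $\hocolimit({\mathcal B})$, where ${\mathcal B}=\{B_{\alpha}\}_{\alpha}$. Since $E_{K,\x,\z}$, and hence ${\mathcal B}$, depends only on the pair $(\x,\z)$, the same argument with $\z$ in place of $\x$ shows that $\hocolimit(\A_{\z})$ is stable homotopy equivalent to $\hocolimit({\mathcal B})$. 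Chaining these S-equivalences with two more applications of Lemma~\ref{lem:hocolimit1}, we conclude that $\E(S_{\x},\R'')$ and $\E(S_{\z},\R'')$, and therefore $S_{\x}$ and $S_{\z}$, are stable homotopy equivalent. Hence the number of distinct stable homotopy types among the fibers $S_{\x}$ is at most $\#T\le(2^m\ell kd)^{O(mk)}$.

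The step I expect to be the main obstacle is establishing the identities $\bigcap_{\alpha\in I}B_{\alpha}=E_{J_I,\x,\z}$. A priori Lemma~\ref{lem:prelim_intersection} only provides the nesting $E_{K'}\subset E_K$ for $K\subset K'$, whereas Lemma~\ref{lem:hocolimit2} needs the family $\{B_{\alpha}\}$ to be closed under intersection with the index set behaving additively; equivalently, it needs the S-equivalences $g_{K,\cdot}$ to be mutually compatible over the entire lattice of subsets $K\subset[2m]$, not just along nested chains. Securing this forces one back into the construction behind Lemma~\ref{lem:prelim_intersection}: the sets $E_{K,\x,\z}$ arise, via Spanier--Whitehead duality, from the sphere-bundle sets $C'_K$ of Lemma~\ref{lem:prelim_union}, which are assembled uniformly over the index sets from the spaces $F'_{K,j}\subset\Omega_K\times\R^k$. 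Since $\Omega_K\cap\Omega_{K'}=\Omega_{K\cap K'}$ and the open sets $\bigcap_{j\in K}\{Q_j<0\}$ underlying the duality intersect correctly, one can choose the representing deformation retracts and the ambient triangulation coherently so that the required identities hold simultaneously; carrying this out carefully is the technical heart of the argument, the remaining ingredients (the normal-form reduction, the transfer statements, and the two uses of the homotopy colimit lemmas) being routine. Finally, specializing the same scheme to $S$ a single disjunction, respectively a single conjunction, recovers Theorems~\ref{the:union} and~\ref{the:intersection}.
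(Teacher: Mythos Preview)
Your proposal is essentially the same argument as the paper's proof: double the family to $\tilde{\mathcal Q}$, write $S_\x$ as a union of intersections $\bigcap_{j\in I_\alpha}A_{j,\x}$, invoke Lemma~\ref{lem:prelim_intersection} to obtain the sets $E_{I,\x,\z}$ and compatible S-equivalences, and then use Lemmas~\ref{lem:hocolimit1} and~\ref{lem:hocolimit2} on the two homotopy colimits to conclude $S_\x$ and $S_\z$ are stable homotopy equivalent.

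The point you flag as the ``main obstacle'' --- that one needs $\bigcap_{I\in\Sigma'}E_{I,\x,\z}=E_{I_{\Sigma'},\x,\z}$ (equivalently $\bigcap_\alpha B_\alpha=E_{J_I,\x,\z}$) rather than merely the nesting $E_{K'}\subset E_K$ for $K\subset K'$ --- is real, and the paper glosses over it just as you feared: it simply asserts that the $g_{I,\x}$ induce S-maps on $\hocolimit$ and cites Lemma~\ref{lem:hocolimit2}. Your sketch of how to secure it (tracing back through the Spanier--Whitehead dual to the sets $C'_{I,\x}$ of Lemma~\ref{lem:prelim_union}, using that these are built over the faces $\Omega_I$ and hence intersect correctly, and choosing the dual representatives coherently) is in fact more careful than what the paper writes down; the paper treats this compatibility as implicit in the construction. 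So there is no divergence in strategy, only in the level of scrutiny applied to this one step.
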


\begin{proof}
We first replace the family ${\mathcal Q}$ by the family,
\[
{\mathcal Q}' = \{Q_1,\ldots,Q_{2m}\} = 
\{Q,-Q \;\mid\; Q \in {\mathcal Q}\}.
\] 
Note that
the cardinality of ${\mathcal Q}'$ is $2m$. Let 
\[
A_i = \{ (\y,\x) \;\mid\; |\y|=1\; \wedge\; Q_i(\y,\x) \leq 0)\}.
\]
It follows from Lemma~\ref{lem:prelim_intersection} that,
there exists a set~$T\subset\R^k$ and with 
\[
\# T\le (2^m\ell k d)^{O(mk)}
\]
such that for every $I \subset [2m]$ and $\x \in \R^k$, 
there exists $\z \in T$ and 
a semi-algebraic set $E_{I,\x,\z}$ 
defined over 
$\R''=\R\la\eps,\bar{\eps},\bar{\delta}\ra$ and 
S-maps $g_{I,\x},g_{I,\z}$
as shown in the diagram
below such that $g_{I,\x},g_{I,\z}$ are both stable homotopy equivalences.
\begin{equation}
\begin{diagram}
\node{}\node{E_{I,\x,\z}}\node{}\\
\node{\E(\bigcap_{i \in I} A_{i,\x},\R'')}\arrow{ne,tb}{g_{I,\x}}{\sim} 
\node{} 
\node{\E(\bigcap_{i \in I}A_{i,\z},\R'')}
\arrow{nw,tb}{g_{I,\z}}{\sim}
\end{diagram}
\end{equation}

Now notice that each $\mathcal{Q}$-closed set~$S$ is a union of sets 
of the form $\displaystyle{\bigcap_{i \in I} A_{i}}$ with
$I \subset [2m]$. Let
\[
S = \bigcup_{I \in \Sigma \subset 2^{[2m]}} \bigcap_{i \in I} A_{i}.
\]
Moreover, the intersection of any sub-collection
of sets of the kind, $\bigcap_{i \in I} A_{i}$ with $I \subset [2m]$,  
is also a set of the same kind.
More precisely,
for any $\Sigma' \subset \Sigma$ there exists
$I_{\Sigma'} \in 2^{[2m]}$ such that
\[
\bigcap_{I \in \Sigma'} \bigcap_{i \in I} A_{i} = 
\bigcap_{i \in I_{\Sigma'}} A_{i}.
\]

We are not able to show directly a stable homotopy equivalence
between $S_\x$ and $S_\z$. Instead, we note that the 
S-maps $g_{I,\x}$ and $g_{I,\z}$ induce 
S-maps 
(cf. Definition \ref{def:hocolimit}) 
\[
\displaylines{
\tilde{g}_\x: 
\hocolimit (\{\E(\bigcap_{i \in I} A_{i,\x},\R'') \mid I \in \Sigma\}) \longrightarrow
\hocolimit (\{ E_{I,\x,\z}\mid I \in \Sigma \} ) \cr
\tilde{g}_\z: 
\hocolimit (\{\E(\bigcap_{i \in I} A_{i,\z},\R'') \mid I \in \Sigma\} ) \longrightarrow
\hocolimit (\{E_{I,\x,\z} \mid I \in \Sigma\} ) 
}
\]

which are stable homotopy equivalences by 
Lemma \ref{lem:hocolimit2}
since each $g_{I,\x}$ and $g_{I,\z}$ 
is a stable homotopy equivalence. 

Since 
$
\displaystyle{
\hocolimit (\{ \bigcap_{i \in I} A_{i,\x} \mid I \in \Sigma\})
}$ 
(resp.
$
\displaystyle{
\hocolimit (\{ \bigcap_{i \in I} A_{i,\z} \mid I \in \Sigma\})
}
$)
is homotopy equivalent by Lemma \ref{lem:hocolimit1} to 
$
\displaystyle{
\bigcup_{I \in \Sigma} \bigcap_{i \in I} A_{i,\x}
}
$
(resp. 
$
\displaystyle{
\bigcup_{I \in \Sigma} \bigcap_{i \in I} A_{i,\z}
}
$), 
it follows 
(see Remark~\ref{rem:transfer})
that 
$
\displaystyle{
S_\x = \bigcup_{I \in \Sigma}\bigcap_{i \in I} A_{i,\x}
}
$ is stable homotopy equivalent to
$
\displaystyle{
S_\z = \bigcup_{I \in \Sigma}\bigcap_{i \in I} A_{i,\z}
}
$.
This proves the theorem.
\end{proof}

\subsection{Inhomogeneous case}
%
We are now in a position to prove Theorem~\ref{the:main}.
\begin{proof}[Proof of Theorem~\ref{the:main}]
Let $\phi$ be a ${\mathcal P}$-closed formula defining the 
${\mathcal P}$-closed semi-algebraic set $S \subset \R^{\ell+k}$. 
Let $1 \gg \eps > 0$ be an infinitesimal, and let 
\[
P_0=\eps^2\left(
\sum_{i=1}^\ell Y_i^2 + \sum_{i=1}^k X_i^2
\right) - 1.
\]
Let $\tilde{\mathcal{P}}=\mathcal{P}\cup\{P_0\}$, 
and let $\tilde{\phi}$ be the 
$\tilde{\mathcal{P}}$-closed formula defined by 
\[
\tilde{\phi}=\phi\wedge \{P_0\leq 0\},
\]
defining the $\tilde{\mathcal{P}}$-closed semi-algebraic set 
$S_b\subset \R\la\eps\ra^{\ell+k}$. Note that the set~$S_b$ is bounded.

It follows from the local conical structure of semi-algebraic sets at 
infinity \cite{BCR} that the semi-algebraic set~$S_b$  
has the same homotopy type as $\E(S,\R\la\eps\ra)$. 

Considering each $P_i$ as a polynomial in the variables $Y_1,\ldots,Y_\ell$ with
coefficients in $\R[X_1,\ldots,X_k]$, and let $P_i^h$ denote the homogenization
of $P_i$. Thus, the polynomials 
$P_i^h \in \R[Y_0,\ldots,Y_\ell,X_1,\ldots,X_k]$ and are homogeneous of
degree $2$ in the variables $Y_0,\ldots,Y_\ell$. 

Let $S_b^h\subset\Sphere^\ell\times\R\la\eps\ra^{k}$
be the semi-algebraic 
set defined by the $\tilde{\mathcal{P}}^h$-closed formula 
$\tilde{\phi}^h$ (replacing $P_i$ by $P_i^h$ in $\tilde{\phi}$). 
It is clear that $S_b^h$ is a union of two disjoint,
closed and bounded semi-algebraic sets each homeomorphic to 
$S_b$, which has the same homotopy type as $\E(S,\R\la\eps\ra)$. 

The theorem is now proved
by applying Theorem~\ref{the:homogeneous} to the family 
$\tilde{\mathcal{P}}^h$ and the semi-algebraic set~$S_b^h$. 
Note that two fibers $S_{\x}$ and $S_{\y}$ are stable homotopy equivalent if and only if 
$\E(S_{\x},\R\la\eps\ra)$ and $\E(S_{\y},\R\la\eps\ra)$ 
are stable homotopy equivalent 
(see Remark \ref{rem:transfer}).
\end{proof}
%
%
\section{Metric upper bounds}
\label{sec:metric}
%
In \cite{BV06} certain metric upper bounds related to homotopy types
were proved as applications of the main result.
Similar results hold in the quadratic case, except now the bounds have
a better dependence on $\ell$. We state these results without proofs.

We first recall the following results from \cite{BV06}. 
Let $V \subset {\R}^\ell$ be a ${\mathcal P}$-semi-algebraic set, where 
${\mathcal P} \subset \Z [Y_1, \ldots , Y_\ell]$. Let
for each $P \in {\mathcal P}$,
$\deg (P) < d$, and
the maximum of the absolute values of coefficients in $P$ be less than some
constant $M$,  $0 < M \in \Z$.
For $a > 0$ we denote by $B_\ell(0,a)$
the open ball of radius  $a$ in $\R^\ell$ centered at the origin.
\begin{theorem}\label{the:ball}
There exists a constant $c > 0$, such that for any
$r_1 > r_2 > M^{d^{c\ell}}$ we have,
\begin{enumerate}
\item
$V \cap B_\ell(0,r_1)$ and $V \cap B_\ell(0,r_2)$ are homotopy equivalent, and
\item
$V \setminus B_\ell(0,r_1)$ and $V \setminus B_\ell(0,r_2)$ are homotopy equivalent.
\end{enumerate}
\end{theorem}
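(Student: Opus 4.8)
The plan is to reduce both assertions to an effective bound on the largest critical value of the squared-norm function on a Whitney stratification of $V$, and then to bound that critical value by a classical root estimate for an auxiliary univariate integer polynomial.

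First I would fix a Whitney stratification of $V$ (for instance one adapted to the sign conditions of ${\mathcal P}$, as in Lemma~\ref{Whitney} and \cite{BPR03}), and let $t_0\ge 0$ be the largest critical value of the semi-algebraic function $y\mapsto |y|^2$ over all of its strata; $t_0$ is finite since $V$ is semi-algebraic. Put $r_0=\sqrt{t_0}$. By Thom's first isotopy lemma in the semi-algebraic setting \cite{CS}, the restriction of $|y|^2$ to $V\cap\{|y|^2>t_0\}$ is a locally trivial fibration over $(t_0,\infty)$; combined with the local conical structure of semi-algebraic sets at infinity \cite{BCR}, this shows that for all $r_0<r_2<r_1$ the space $V$ is, outside radius $r_2$, a product with the radial coordinate, so that the inclusions $V\cap B_\ell(0,r_2)\hookrightarrow V\cap B_\ell(0,r_1)$ and $V\setminus B_\ell(0,r_1)\hookrightarrow V\setminus B_\ell(0,r_2)$ are deformation retractions, in particular homotopy equivalences. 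Both assertions of the theorem then hold provided $r_2>r_0$, and it remains to prove $r_0\le M^{d^{c\ell}}$ for a suitable absolute constant $c$.

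To bound $r_0$, I would choose the stratification so that each stratum is a smooth manifold cut out by at most $\ell$ polynomial equations of degree $O(d)$ — obtained from sign conditions on ${\mathcal P}$ together with finitely many auxiliary polynomials in general position, as in \cite{BPR03} (or via an infinitesimal perturbation, as elsewhere in this paper). On a fixed stratum the critical points of $|y|^2$ form the zero set of a Lagrange-multiplier system of $O(\ell)$ polynomial equations in $O(\ell)$ unknowns of degrees $O(d)$; eliminating all variables but $t=|y|^2$ yields a nonzero $\Theta(t)\in\Z[t]$ of degree $d^{O(\ell)}$ whose coefficients have absolute value at most $M^{d^{O(\ell)}}$, these being the standard degree and height bounds for resultants (or subresultant coefficients) of integer polynomial systems of this size \cite{BPR03}. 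Every finite critical value of $|y|^2$ on the stratum is a root of $\Theta$, and by the Cauchy bound every real root $t$ of $\Theta$ satisfies $|t|<1+M^{d^{O(\ell)}}$. Taking the square root and absorbing constants into the exponent gives $r_0<M^{d^{c\ell}}$; there are only $d^{O(\ell)}$ strata, so taking the maximum over them does not change the bound.

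The step I expect to be the main obstacle is the combined bookkeeping in the last paragraph: one must produce a Whitney stratification of $V$ in which the strata are simultaneously described by \emph{few} equations (so that the Lagrange system stays $O(\ell)$-dimensional and the elimination keeps the exponent of $d$ linear in $\ell$) and of controlled degree and coefficient height, and then propagate these estimates faithfully through the elimination so that the dependence on $M$ emerges exactly in the form $M^{d^{c\ell}}$. A secondary point needing care is the upgrade from ``trivial fibration above $t_0$'' to ``deformation retraction of the relevant subspace'' on both the inner side $V\cap B_\ell$ and the outer side $V\setminus B_\ell$, together with the harmless distinction between open and closed balls, which is precisely where the conical structure at infinity is genuinely used.
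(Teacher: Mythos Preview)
The paper does not contain a proof of this theorem: Theorem~\ref{the:ball} is explicitly recalled from \cite{BV06} in Section~\ref{sec:metric}, where the authors write ``We state these results without proofs.'' So there is no proof in this paper to compare against.

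That said, a brief comparison with the approach in \cite{BV06} is possible. There the metric result is obtained as an \emph{application} of the main theorem on the number of homotopy types of fibers (Theorem~\ref{the:mainBV} here): one views $r$ as a single parameter, applies the partition of the parameter line into finitely many pieces over which the fiber is constant up to homotopy, and then invokes an effective quantifier-elimination/bit-size bound to locate the last breakpoint below $M^{d^{c\ell}}$. Your proposal instead goes the direct Morse-theoretic route --- stratify $V$, study critical values of $|y|^2$, use Thom's isotopy lemma above the top critical value, and bound that value by a root bound on an eliminant. Both routes ultimately rest on the same quantitative elimination estimates from \cite{BPR03}, so neither is obviously shorter; the \cite{BV06} route reuses machinery already built, while yours is more self-contained.

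Two points in your sketch deserve care. First, the statement of Theorem~\ref{the:ball} suppresses the dependence on $m=\#\mathcal P$, and your count ``only $d^{O(\ell)}$ strata'' is not correct without absorbing $m$ somewhere: the number of realizable sign conditions is $(md)^{O(\ell)}$, and the Lagrange systems you write down involve all $m$ polynomials through the stratification; you should check how $m$ is handled in \cite{BV06} before claiming the final exponent. Second, the passage ``eliminating all variables but $t$ yields a nonzero $\Theta(t)\in\Z[t]$ of degree $d^{O(\ell)}$ with coefficients bounded by $M^{d^{O(\ell)}}$'' is exactly where the whole content lies, and it is not automatic: the Lagrange system has $O(\ell)$ equations in $O(\ell)$ unknowns but may define a positive-dimensional variety (so plain resultants can vanish identically), and the relevant height bounds in \cite{BPR03} are stated for specific algorithms (univariate representations, subresultants) rather than for ``elimination'' in the abstract. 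You correctly flagged this bookkeeping as the main obstacle; it is genuinely the crux, and a complete write-up would need to name the precise tool (e.g., the quantitative Positivstellensatz or the sampling/univariate-representation bounds in \cite{BPR03}) and verify that its output has the claimed degree and bit-size.
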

In the special case of quadratic polynomials we get the following improvement 
of Theorem~\ref{the:ball}.
\begin{theorem}\label{the:ballquad}
Let $\R$ be a real closed field. 
Let $V \subset {\R}^\ell$ be a ${\mathcal P}$-semi-algebraic set, where 
\[
{\mathcal P} = \{P_1,\ldots,P_m\} \subset \R[Y_1,\ldots,Y_\ell],
\]
with
${\rm deg}(P_i) \leq 2$, $1 \leq i \leq m$ and
the maximum of the absolute values of coefficients in ${\mathcal P}$ is less than some
constant $M$,  $0 < M \in \Z$.

There exists a constant $c > 0$, such that for any
$r_1 > r_2 > M^{\ell^{cm}}$ we have,
\begin{enumerate}
\item\label{the:ballquad:1}
$V \cap B_\ell(0,r_1)$ and $V \cap B_\ell(0,r_2)$ are stable homotopy equivalent, and
\item\label{the:ballquad:2}
$V \setminus B_\ell(0,r_1)$ and $V \setminus B_\ell(0,r_2)$ are stable homotopy equivalent.
\end{enumerate}
\end{theorem}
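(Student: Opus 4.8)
The plan is to reduce the two statements to an application of Theorem~\ref{the:homogeneous} (or rather the parametrized picture underlying it), using the radius as the single parameter, together with the conical-structure-at-infinity argument. First I would treat part~(\ref{the:ballquad:1}). Introduce a new parameter $X_1$ and rescale: for a point $\y \in \R^\ell$ with $|\y| \le 1$ and a parameter value $r = X_1 > 0$, the set $V \cap B_\ell(0,r)$ is, after the substitution $\y \mapsto r\y$, a semi-algebraic subset of the unit ball whose defining polynomials $P_i(r\y)$ have degree $\le 2$ in $Y_1,\ldots,Y_\ell$ and degree $\le 2$ in $X_1$. Homogenizing in a variable $Y_0$ as in the proof of Theorem~\ref{the:main} (and intersecting with $\Sphere^\ell$), we obtain a family $S^h_{X_1} \subset \Sphere^\ell \times \R$ of $\tilde{\mathcal P}^h$-closed sets, with $\#\tilde{\mathcal P}^h = m+1$, $\deg_Y \le 2$, $\deg_{X_1} \le 2$, and $k=1$ parameter, such that $S^h_r$ carries the same (stable) homotopy type as $V \cap B_\ell(0,r)$.

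Next I would apply Proposition~\ref{prop:main} (with $k=1$, $m$ replaced by $m+1$, $d$ a constant) to this family over the real closed field $\R$: it produces a finite set $T \subset \R$ of bad parameter values with $\#T \le \ell^{O(m)}$, and for every $r \notin G$ (the discriminant-type set, $\#G \le \ell^{O(m)}$) the fibers are stable homotopy equivalent across each connected component of $\R \setminus G$. The key quantitative point is then an effective root-separation bound: the set $G$ is defined by polynomials whose coefficients are integers of bit-size bounded in terms of $M$ and $\ell$, so by a standard bound on the absolute values of real roots of such polynomials (Cauchy's bound, as in \cite{BPR03}), every element of $G$ has absolute value at most $M^{\ell^{cm}}$ for a suitable constant $c$. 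Hence the interval $(M^{\ell^{cm}},\infty)$ lies in a single connected component of $\R \setminus G$, and all fibers $S^h_r$ for $r$ in this interval are stable homotopy equivalent; translating back gives statement~(\ref{the:ballquad:1}) for any $r_1 > r_2 > M^{\ell^{cm}}$. Finally, (\ref{the:ballquad:2}) follows by the same argument applied to the complement: $V \setminus B_\ell(0,r)$ is handled via the closed-and-bounded set $(V \cap \overline{B_\ell(0,R)}) \setminus B_\ell(0,r)$ for $R$ an infinitesimally large (or just sufficiently large) radius, using the local conical structure of $V$ at infinity \cite{BCR} so that this annular set has the stable homotopy type of $V \setminus B_\ell(0,r)$; applying Proposition~\ref{prop:main} to the resulting two-parameter family (radius and cutoff) and invoking the same Cauchy bound on the critical values gives the claim.

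The main obstacle is bookkeeping the dependence on $M$ correctly through the rescaling and homogenization: one must check that the polynomials cutting out the discriminant set $G$ really do have coefficients controlled by $M$ and $\ell$ alone (the rescaling introduces powers of $r$, but these become the parameter and do not enlarge the coefficient bound), so that Cauchy's root bound yields the clean exponent $\ell^{cm}$ rather than something double-exponential. A secondary technical point is that the argument only delivers stable homotopy equivalence, not homotopy equivalence, because Theorem~\ref{the:homogeneous} and the Spanier--Whitehead-duality step behind Lemma~\ref{lem:prelim_intersection} are used; this is exactly why the conclusion is weaker than Theorem~\ref{the:ball}. I would state both parts with the same constant $c$ by taking the maximum of the two constants obtained.
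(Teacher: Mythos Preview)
The paper explicitly states ``We state these results without proofs'' (Section~\ref{sec:metric}), so there is no proof in the paper to compare against. The intended argument is evidently the analogue of the proof of Theorem~\ref{the:ball} in \cite{BV06}, with Theorem~\ref{the:main} (and the machinery behind it, especially Proposition~\ref{prop:main} and Theorem~\ref{the:homogeneous}) playing the role of Theorem~\ref{the:mainBV}. Your proposal follows exactly this line: treat the radius as a single parameter ($k=1$), rescale into the unit ball, homogenize as in the proof of Theorem~\ref{the:main}, apply Proposition~\ref{prop:main}, and then bound the critical radii via an effective root bound. This is the right strategy and yields the correct form $M^{\ell^{cm}}$ because with $k=1$ and $d=O(1)$ the discriminant set $G\subset\R$ is a finite set of cardinality $\ell^{O(m)}$ cut out by polynomials of degree $\ell^{O(m)}$ and coefficient size $M^{\ell^{O(m)}}$.

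Two points where you should tighten the argument. First, $B_\ell(0,r)$ is open, so neither $V\cap B_\ell(0,r)$ nor $V\setminus B_\ell(0,r)$ is a ${\mathcal P}$-closed set as written; you should pass to the closed ball (resp.\ closed exterior) and note that for $r$ beyond the threshold the conical structure at infinity makes the open and closed versions homotopy equivalent. Second, your sentence ``polynomials whose coefficients are integers of bit-size bounded in terms of $M$ and $\ell$'' overstates what you have: the original coefficients lie in $\R$ and are merely bounded in absolute value by $M$. The correct way to get an effective Cauchy-type bound here is to observe that the defining polynomials of $G$ are obtained from those of ${\mathcal P}$ by universal algebraic operations (determinants, resultants, projections) with integer structure constants, so the resulting univariate polynomials in $r$ have coefficients that are polynomials in the original coefficients with integer coefficients of controlled size; the root bound then follows from bounding numerator and leading coefficient separately (as in \cite[Chapter~10]{BPR03}). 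For part~(\ref{the:ballquad:2}) you can avoid the two-parameter family: fix $R$ to be the explicit threshold already obtained in part~(\ref{the:ballquad:1}) (or an infinitesimal $1/\eps$ as in the proof of Theorem~\ref{the:main}), and run the same $k=1$ argument with the single parameter $r$.
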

%


\begin{thebibliography}{50}

\bibitem{Agrachev}
{\sc A.A.\ Agrachev},
\newblock {Topology of quadratic maps and Hessians of smooth maps},
\newblock {Algebra, Topology, Geometry, Vol 26 (Russian),85-124, 162,
Itogi Nauki i Tekhniki, Akad. Nauk SSSR, Vsesoyuz. Inst. Nauchn.i 
Tekhn. Inform., Moscow, 1988.
Translated in J. Soviet Mathematics. 49 (1990), no. 3, 990-1013.}


\bibitem{Barvinok93}
{\sc A.I.\ Barvinok},
\newblock {Feasibility Testing for Systems of Real Quadratic Equations},
\newblock {\em Discrete and Computational Geometry}, 10:1-13 (1993).


\bibitem{Barvinok97}
{\sc A. I.\ Barvinok}
\newblock{On the Betti numbers of semi-algebraic sets defined by few quadratic
inequalities},
\newblock{\em Mathematische Zeitschrift}, 225, 231-244 (1997).


\bibitem{Basu05a}
{\sc S.\ Basu},
\newblock{Efficient algorithm for computing the Euler-Poincar\'e characteristic
of semi-algebraic sets defined by few quadratic inequalities},
\newblock{\em Computational Complexity}, 15, 236-251, (2006).

\bibitem{Basu07a}
{\sc S.\ Basu},
 \newblock {Computing the top few Betti numbers of semi-algebraic sets defined 
by quadratic inequalities in polynomial time},
\newblock to appear in  
{\em Foundations of Computational Mathematics} and available at
[arXiv:math.AG/0603262].

\bibitem{Basu07b}
{\sc S.\ Basu},
 \newblock {On the number of topolgical types occurring in a 
parametrized family of arrangements},
\newblock preprint, available at [arXiv:0704.0295].

\bibitem{B99}
{\sc S.\ Basu}
\newblock{On Bounding the Betti Numbers and Computing the Euler 
Characteristic of Semi-algebraic Set}, 
\newblock{\em Discrete and Computational Geometry}, 22:1-18 (1999).

\bibitem{BK}
{\sc S.\ Basu, M.\ Kettner},
\newblock{A sharper estimate on the Betti numbers of sets defined by 
quadratic inequalities},
\newblock {\em Discrete and Computational Geometry}, to appear.
Available at [arXiv:math/0610954].

\bibitem{BV06}
{\sc S.\ Basu, N.\ Vorobjov},  
\newblock{On the number of homotopy types of fibers of a definable map},
\newblock to appear in  {\em Journal of the London Mathematical Society},
available at [arXiv:math.AG/0605517].

\bibitem{BZ}
{\sc S.\ Basu, T.\ Zell},  
\newblock{On projections of semi-algebraic sets defined by few 
quadratic inequalities,}
\newblock to appear in {\em Discrete and Computational Geometry},
available at [arXiv:math.AG/0602398].

\bibitem{BP'R07}
{\sc S.\ Basu, D.\ Pasechnik, M.-F. Roy},  
\newblock{Betti numbers of semi-algebraic sets defined by partly 
quadratic systems of polynomials},
\newblock preprint.


\bibitem{BPR03}
{\sc S.\ Basu, R.\ Pollack, M.-F. \ Roy},
\newblock{\em Algorithms in Real Algebraic Geometry},
Second Edition, Springer-Verlag, (2006).

\bibitem{BCR}
{\sc J. Bochnak, M. Coste \and M.-F. Roy},
{\em Real Algebraic Geometry},
Springer-Verlag, Berlin - Heidelberg, (1998).

\bibitem{Briand07}
{\sc E.\ Briand}
\newblock{Equations, inequations and inequalities characterizing 
the configurations of two real projective conics},
\newblock{\em Applicable Algebra in Engineering, Communication and Computing},
No.{1-2}, pp. 21-52, Vol. 18, (2007).



\bibitem{CosteSem} 
{\sc M.\ Coste},
\newblock{\em An Introduction to Semialgebraic Geometry},
\newblock{(Istituti Editoriali e Poligrafici Internazionali, Pisa - Roma 2000)}.

\bibitem{CS}
 {\sc M. Coste \and M. Shiota},
\newblock{Thom's first isotopy lemma: a semialgebraic version, with uniform bound}, in:
{\em Real Algebraic and Analytic Geometry, F. Broglia et al., eds.}, 83--101, 
Walter de Gruyter, Berlin - New York, (1995) .

\bibitem{Dieudonne} 
{\sc J. A.\ Dieudonne}
\newblock {\em History of Algebraic and differential Topology}.
\newblock Birkhauser Verlag, 1989.


\bibitem{Fuks}
{\sc D.B.\ Fuks \and V.A.\  Rokhlin},
\newblock{\em Beginner's Course in Topology: Geometric Chapters},
Springer-Verlag, New York, (1984).
\newblock Original Russian Edition: 
{\em Nachal'nyj Kurs Topologii: Geometricheskie Glavy},
Nauka, Moscow (1977).


\bibitem{GM}
{\sc M. Goresky \and R. MacPherson},
{\em Stratified Morse theory,}
Springer-Verlag, New York, (1987).

\bibitem{GP}
{\sc D.\ Grigor'ev, D.V.\ Pasechnik},
\newblock{Polynomial time computing over quadratic maps I. 
Sampling in real algberaic sets},
\newblock{\em Computational Complexity}, 14:20-52 (2005).

\bibitem{GVZ}
{\sc A. Gabrielov, N. Vorobjov \and T. Zell},
\newblock{Betti numbers of semi-algebraic and sub-Pfaffian sets}, 
\newblock{\em J. London Math. Soc.}, 69, 27--43, (2004).

\bibitem{Hardt} 
{\sc R. M. Hardt},
\newblock{Semi-algebraic Local Triviality in Semi-algebraic Mappings},
\newblock{\em Am. J. Math.}, 102, 291-302, (1980).

\bibitem{Lopez}
{\sc S. L{\'o}pez de Medrano, Santiago}, 
\newblock{Topology of the intersection of quadrics in {${\bf R}\sp n$}}, 
\newblock{\em Algebraic topology (Arcata, CA, 1986)}, Lecture Notes in Math., 
Vol. 1370, 280--292, Springer, (1989).

\bibitem{Spanier}
{\sc E.\ H. Spanier}
\newblock {Algebraic Topology},
\newblock McGraw-Hill Book Company, 1966.

\bibitem{Spanier-Whitehead}
{\sc E.\ H. Spanier, J.H.C.\ Whitehead}
\newblock {Duality in Relative Homotopy Theory},
\newblock {\em Annals of Mathematics}, Vol. 67, No. 2, 203-238 (1958).


\bibitem{Smale}
{\sc S.\ Smale}
\newblock {A Vietoris mapping theorem for homotopy},
\newblock {\em Proc. Amer. Math. Soc.} 8:3, 604-610 (1957).

\bibitem{Wall}
{\sc C. T. C. Wall},
\newblock{Stability, pencils and polytopes}, 
\newblock {\em Bull. London Math. Soc.}, 12(6):401--421, (1980).


\end{thebibliography}
\end{document}